\documentclass[11pt,twoside]{amsart}
\usepackage{amssymb}\pagestyle{headings}
\oddsidemargin=0in \evensidemargin=0in \textwidth=6.7in
\textheight=9.0in
\parindent=0.0in
\sloppy 
\topmargin 0.0cm
%\mathsurround=1pt
%

%%%%%ENVIRONMENTS

\catcode`@=11

\let\latex@newcommand\newcommand
\let\latex@renewcommand\renewcommand

\def\if@undefined#1{\ifx#1\un@@@defined@@@}
\def\newcommand#1{\if@undefined{#1}
  \let\next@=\latex@newcommand \else
  \let\next@=\latex@renewcommand \fi
  \next@{#1}}

\catcode`@=12

\begin{document}
\theoremstyle{plain}
\newtheorem{thm}{Theorem}[section]
\newtheorem*{thm*}{Theorem}
\newtheorem{prop}[thm]{Proposition}
\newtheorem*{prop*}{Proposition}
\newtheorem{lemma}[thm]{Lemma}
\newtheorem{cor}[thm]{Corollary}
\newtheorem*{conj*}{Conjecture}
\newtheorem*{cor*}{Corollary}
\newtheorem{defn}[thm]{Definition}
\newtheorem{cond}{Condition}
\theoremstyle{definition}
\newtheorem*{defn*}{Definition}
\newtheorem{rems}[thm]{Remarks}
\newtheorem*{rems*}{Remarks}
\newtheorem*{proof*}{Proof}
\newtheorem*{not*}{Notation}
\newcommand{\npartial}{\slash\!\!\!\partial}
\newcommand{\Heis}{\operatorname{Heis}}
\newcommand{\Solv}{\operatorname{Solv}}
\newcommand{\Spin}{\operatorname{Spin}}
\newcommand{\SO}{\operatorname{SO}}
\newcommand{\ind}{\operatorname{ind}}
\newcommand{\Index}{\operatorname{index}}
\newcommand{\ch}{\operatorname{ch}}
\newcommand{\rank}{\operatorname{rank}}
%    Absolute value notation
\newcommand{\abs}[1]{\lvert#1\rvert}
 \newcommand{\A}{{\mathcal A}}
 \newcommand{\E}{{\mathcal E}}
        \newcommand{\D}{{\mathcal D}}\newcommand{\HH}{{\mathcal H}}
        \newcommand{\LL}{{\mathcal L}}
        \newcommand{\B}{{\mathcal B}}
         \newcommand{\S}{{\mathcal S}}
 \newcommand{\F}{{\mathcal F}}

        \newcommand{\K}{{\mathcal K}}
\newcommand{\oo}{{\mathcal O}}
         \newcommand{\PP}{{\mathcal P}}
        \newcommand{\s}{\sigma}
\newcommand{\al}{\alpha}
        \newcommand{\coker}{{\mbox coker}}
        \newcommand{\p}{\partial}
        \newcommand{\dd}{|\D|}
        \newcommand{\n}{\parallel}
\newcommand{\bma}{\left(\begin{array}{cc}}
\newcommand{\ema}{\end{array}\right)}
\newcommand{\bca}{\left(\begin{array}{c}}
\newcommand{\eca}{\end{array}\right)}
\def\clsp{\overline{\operatorname{span}}}
\def\T{\mathbb T}
\def\Aut{\operatorname{Aut}}

\newcommand{\sr}{\stackrel}
\newcommand{\da}{\downarrow}
\newcommand{\tD}{\tilde{\D}}

        \newcommand{\R}{\mathbf R}
        \newcommand{\C}{\mathbf C}
        \newcommand{\h}{\mathbf H}
\newcommand{\Z}{\mathbf Z}
\newcommand{\N}{\mathbf N}
\newcommand{\tto}{\longrightarrow}
\newcommand{\ben}{\begin{displaymath}}
        \newcommand{\een}{\end{displaymath}}
\newcommand{\be}{\begin{equation}}
\newcommand{\ee}{\end{equation}}

        \newcommand{\bean}{\begin{eqnarray*}}
        \newcommand{\eean}{\end{eqnarray*}}
\newcommand{\nno}{\nonumber\\}
\newcommand{\bea}{\begin{eqnarray}}
        \newcommand{\eea}{\end{eqnarray}}

\def\cross#1{\rlap{\hskip#1pt\hbox{$-$}}}
        \def\intcross{\cross{0.3}\int}
        \def\bigintcross{\cross{2.3}\int}

%    Blank box placeholder for figures (to avoid requiring any
%    particular graphics capabilities for printing this document).
%\newcommand{\blankbox}[2]{%
 % \parbox{\columnwidth}{\centering
%    Set fboxsep to 0 so that the actual size of the box will match the
%    given measurements more closely.
  %  \setlength{\fboxsep}{0pt}%
   % \fbox{\raisebox{0pt}[#2]{\hspace{#1}}}%
  %}%
%}
\newcommand{\supp}[1]{\operatorname{#1}}
\newcommand{\norm}[1]{\parallel\, #1\, \parallel}
\newcommand{\ip}[2]{\langle #1,#2\rangle}
\setlength{\parskip}{.3cm}
\newcommand{\nc}{\newcommand}
\nc{\nt}{\newtheorem} \nc{\gf}[2]{\genfrac{}{}{0pt}{}{#1}{#2}}
\nc{\mb}[1]{{\mbox{$ #1 $}}} \nc{\real}{{\mathbb R}}
\nc{\comp}{{\mathbb C}} \nc{\ints}{{\mathbb Z}}
\nc{\Ltoo}{\mb{L^2({\mathbf H})}} \nc{\rtoo}{\mb{{\mathbf R}^2}}
\nc{\slr}{{\mathbf {SL}}(2,\real)} \nc{\slz}{{\mathbf
{SL}}(2,\ints)} \nc{\su}{{\mathbf {SU}}(1,1)} \nc{\so}{{\mathbf
{SO}}} \nc{\hyp}{{\mathbb H}} \nc{\disc}{{\mathbf D}}
\nc{\torus}{{\mathbb T}}
\newcommand{\tk}{\widetilde{K}}
\newcommand{\boe}{{\bf e}}\newcommand{\bt}{{\bf t}}
\newcommand{\vth}{\vartheta}
\newcommand{\CGh}{\widetilde{\CG}}
\newcommand{\db}{\overline{\partial}}
\newcommand{\tE}{\widetilde{E}}
\newcommand{\tr}{\mbox{tr}}
\newcommand{\ta}{\widetilde{\alpha}}
\newcommand{\tb}{\widetilde{\beta}}
\newcommand{\txi}{\widetilde{\xi}}
\newcommand{\hV}{\hat{V}}
\newcommand{\IC}{\mathbf{C}}
\newcommand{\IZ}{\mathbf{Z}}
\newcommand{\IP}{\mathbf{P}}
\newcommand{\IR}{\mathbf{R}}
\newcommand{\IH}{\mathbf{H}}
\newcommand{\IG}{\mathbf{G}}
\newcommand{\CC}{{\mathcal C}}
\newcommand{\CS}{{\mathcal S}}
\newcommand{\CG}{{\mathcal G}}
\newcommand{\CL}{{\mathcal L}}
\newcommand{\CO}{{\mathcal O}}
%\nc{\tr}{{\mbox{ Tr}}
\nc{\ca}{{\mathcal A}} \nc{\cag}{{{\mathcal A}^\Gamma}}
\nc{\cg}{{\mathcal G}} \nc{\chh}{{\mathcal H}} \nc{\ck}{{\mathcal
B}} \nc{\cl}{{\mathcal L}} \nc{\cm}{{\mathcal M}}
\nc{\cn}{{\mathcal N}} \nc{\cs}{{\mathcal S}} \nc{\cz}{{\mathcal
Z}} \nc{\cM}{{\mathcal M}}
%\nc{\ind}{{\rm ind}}
\nc{\sind}{\sigma{\rm -ind}}
\newcommand{\la}{\langle}
\newcommand{\ra}{\rangle}

\renewcommand{\labelitemi}{{}}

\def\title#1{\begin{center}\bf\large #1\end{center}\vskip 0.3 in}
\def\author#1{\begin{center}#1\end{center}}

% \title{Twisted Cyclic Theory and the Modular Index Theory of Cuntz
%Algebras: DRAFT}
\title{TWISTED CYCLIC THEORY AND AN INDEX THEORY FOR THE GAUGE INVARIANT KMS STATE ON CUNTZ ALGEBRAS}

%\maketitle

\centerline {A.L. Carey$^\ast$, J. Phillips$^\sharp$, A. Rennie$^{\ast,\dag}$}
\vspace*{0.1in}

\centerline {$^\ast$ Mathematical Sciences Institute, Australian National University,
Canberra, ACT, AUSTRALIA}

\centerline{ $^\sharp$ Department of Mathematics and Statistics,
University of Victoria,
Victoria, BC,  CANADA}

\centerline{$^\dag$ Department of Mathematics, University of Copenhagen, Copenhagen, Denmark}

\email{ acarey@maths.anu.edu.au, phillips@math.uvic.ca,
        rennie@math.ku.dk}

%\tableofcontents

\vspace*{0.1in}
 %       \maketitle

\centerline{\bf Abstract}
This paper presents, by example, an index theory
appropriate to algebras without trace. Whilst we work exclusively with
the Cuntz algebras the exposition is designed to indicate how to develop a general theory. 
Our main result is an index theorem (formulated in terms of
spectral flow) using a twisted cyclic cocycle where
the twisting comes from the modular automorphism group
for the canonical gauge action on the Cuntz algebra. 
We introduce a modified $K_1$-group of the Cuntz algebra 
so as to pair with this twisted cocycle. As a corollary we
obtain a
noncommutative geometry interpretation for Araki's notion of relative entropy 
in
this example. We also note the connection of this example to the theory of noncommutative manifolds.

\setlength{\parskip}{.01cm}
\tableofcontents
\setlength{\parskip}{.3cm}
\section{Introduction}

In this paper we initiate
an  extension of index theory
to algebras without trace. We take the Cuntz algebras $O_n$ \cite{Cu}
as  basic
examples.
 In the absence of a non-trivial trace on the Cuntz algebras, 
 our approach is to use a
KMS state, \cite{BR2},
to define an index pairing using spectral flow.
The state we use is the unique KMS state for the canonical
$\T^1$ gauge action on $O_n$. As $O_n$
is a graph algebra, we can import many of the techniques of
\cite{pr} where the semifinite version of the local index formula was used 
to calculate spectral flow invariants of a class of Cuntz-Krieger algebras. 
The Cuntz algebras give us an excellent testing ground for the ideas 
required to deal
with index theory in a type III setting.

The approach is motivated by
\cite{CPRS2} where a semifinite local index formula 
in noncommutative geometry is  proved. 
This semifinite theory is reviewed 
 in Section \ref{background} together with  notation for the Cuntz algebras.
In \cite{pr} the semifinite  theory was applied to 
certain graph $C^*$-algebras.
The new idea explained there was
the construction of a Kasparov $A,F$-module 
for the graph algebra $A$ of a locally
finite graph with no sources where $F=A^{\T^1}$ is the  
fixed point algebra  for the natural 
$\T^1$ gauge action. This construction applies to the Cuntz algebra because it is a graph algebra of this type.
A $K$-theoretic refinement for the
index theorem in \cite{pr} was developed
in \cite{CPR} where the odd Kasparov module of \cite{pr}
is `doubled up' on a half
infinite cylinder to an even  Kasparov $M(F,A),F$-module, where $M(F,A)$ is
the mapping cone algebra for the inclusion of the fixed point algebra.
Our idea is  to modify this tracial case so as to extend, 
as far as is possible, these results to
the Cuntz algebra.

We easily observe that there is a Kasparov module for the Cuntz algebra and hence that we
have a $K_0(F)$-valued pairing with $M(F,A)$.
However, in the absence of a trace we need a new idea.
{\bf The primary result of this
paper introduces a modified
spectral triple
(referred to as a `modular spectral triple') with which we can
compute an index pairing.}  
Our method, of employing a 
KMS functional instead of a trace, leads to various subtleties. Restricting the KMS state to the fixed point algebra $F$ gives a trace on $F$, and so a homomorphism on $K_0(F)$. However, when we pass to the Morita equivalent algebra of compact endomorphisms on our Kasparov module, we find that the functional we are forced to employ on this new algebra does not respect all Murray-von Neumann equivalences.
It is this fact that leads to the consideration of  finer invariants
than those obtained from ordinary K-theory in the KMS or `twisted setting'.

{\bf We show that modular spectral triples lead to
`twisted residue cocycles'
using a variation on the semifinite residue cocycle
of \cite{CPRS2}}. 
It is well known that such twisted cocycles cannot pair with ordinary $K_1$, rather
we introduce, in Section 4, a substitute which we term `modular $K_1$'.
It is a semigroup 
and, as is 
explained in 
our main theorem (Theorem 5.5), there is
a general spectral flow formula which defines
the pairing of modular $K_1$
with our `twisted residue cocycle'.
There is an analogy with the local index formula
of noncommutative geometry 
in the $\LL^{1,\infty}$-summable
 case, however, there are important
differences: the usual residue cocycle is replaced
by a twisted residue cocycle and the Dixmier trace 
arising in the standard
situation is replaced by a KMS-Dixmier functional. 
The common ground with \cite{CPRS2} stems from the
use of the general spectral flow formula of \cite{CP2} to derive
the twisted residue cocycle and this has the corollary that we have a homotopy invariant.

For the Cuntz algebras the main result is
 Theorem 5.6 and its Corollary where we compute, for particular modular unitaries
in matrix algebras over the Cuntz algebras, the precise numerical 
values arising from the general formalism. We use \cite{CPR}
to see that these numerical values
provide strong evidence that the mapping cone KK-theory of Section 2
is playing a (yet to be fully understood) role.
In the final Section we note that there is a physical interpretation of the
spectral flow invariant we are calculating in terms of Araki's notion of
relative entropy of two KMS states. We also show that our modular spectral triples for the Cuntz algebras satisfy twisted versions of Connes' axioms for noncommutative manifolds.

We plan to return to this matter and to the appropriate cohomological
setting for our index theorem elsewhere. 
Already, in work in progress  \cite{crt},
  we have uncovered further
examples which indicate there is a complex and interesting theory to be
understood.

The organisation is summarised in the Contents list.
Section 2 is review material which places this article in context. 
The Cuntz algebra example begins on Section 3
and the main new material is in
 Sections 4 and 5.

{\bf Acknowledgements} We would like to thank Ian Putnam,
Nigel Higson, Ryszard Nest, Sergey Neshveyev and 
Kester Tong for advice and comments. The first named author
was supported by the Australian Research
Council, the Clay Mathematics Institute,
and  the Erwin Schrodinger Institute  (where some of this
paper was written). The second named author acknowledges the support of
NSERC (Canada) while the third named author thanks  Statens
Naturvidenskabelige Forskningsr{\aa}d, Denmark. All authors are grateful for 
the support of the Banff International Research Station where some of this 
research was undertaken.

\section{Some background} 
\label{background}
\subsection{Semifinite noncommutative geometry}
We begin with some semifinite versions of standard definitions and
results following \cite{CPRS2}. 
Let $\phi$ be a fixed faithful, normal, semifinite trace
on a von Neumann algebra ${\mathcal N}$. Let ${\mathcal
K}_{\mathcal N }$ be the $\phi$-compact operators in ${\mathcal
N}$ (that is the norm closed ideal generated by the projections
$E\in\mathcal N$ with $\phi(E)<\infty$).

\begin{defn} A {\bf semifinite
spectral triple} $(\A,\HH,\D)$ is given by a Hilbert space $\HH$, a
$*$-algebra $\A\subset \cn$ where $\cn$ is a semifinite von
Neumann algebra acting on $\HH$, and a densely defined unbounded
self-adjoint operator $\D$ affiliated to $\cn$ such that
$[\D,a]$ is densely defined and extends to a bounded operator
in $\cn$ for all $a\in\A$ and $(\lambda-\D)^{-1}\in\K_\cn$ for all $\lambda\not\in{\R}.$
The triple is said to be {\bf even} if there is $\Gamma\in\cn$ such
that $\Gamma^*=\Gamma$, $\Gamma^2=1$,  $a\Gamma=\Gamma a$ for all
$a\in\A$ and $\D\Gamma+\Gamma\D=0$. Otherwise it is {\bf odd}.
\end{defn}

%Recall that a
%{\it partial derivation} $\delta$ is a derivation defined on some
%subalgebra of $\cn$ which need not be (weakly) dense in $\cn$.
%Thus $\mbox{dom}\ \delta=\{T\in\cn:\delta(T)\mbox{ is
%bounded}\}$.

%\begin{defn}\label{qck} A semifinite spectral triple
%$(\A,\HH,\D)$ is $QC^k$ for $k\geq 1$
%($Q$ for quantum) if for all $a\in\A$ the operators $a$ and
%$[\D,a]$ are in the domain of $\delta^k$, where
%$\delta(T)=[\dd,T]$ is the partial derivation on $\cn$ defined by
%$\dd$. We say that $(\A,\HH,\D)$ is $QC^\infty$ if it is $QC^k$
%for all $k\geq 1$.
%\end{defn}

%The notation and terminology is meant to be analogous to the classical
%case, but we introduce the $Q$ so that there is no confusion
%between `quantum differentiability' of $a\in\A$ and classical
%differentiability of functions.
% We also note (see \cite{CPRS2} that if $T\in{\mathcal N}$, one can show
%that $[\dd,T]$ is bounded if and only if $[(1+\D^2)^{1/2},T]$ is
%bounded and
%, by using the functional calculus to show that
%$\dd-(1+\D^2)^{1/2}$ extends to a bounded operator in $\cn$. 
%In
%fact, writing $\dd_1=(1+\D^2)^{1/2}$ and $\delta_1(T)=[\dd_1,T]$
%we have \ben \mbox{dom}\ \delta^n=\mbox{dom}\ \delta_1^n\ \ \ \
%\mbox{for all}\ n.\een 
Note that if $T\in\cn$ and
$[\D,T]$ is bounded, then $[\D,T]\in\cn$. 

We recall from \cite{FK}
that if $S\in\mathcal N$, the {\bf t-th generalized singular
value} of $S$ for each real $t>0$ is given by
$$\mu_t(S)=\inf\{||SE||\ : \ E \mbox{ is a projection in }
{\mathcal N} \mbox { with } \phi(1-E)\leq t\}.$$
The ideal $\LL^1({\mathcal N},\phi)$ consists of those operators $T\in
{\mathcal N}$ such that $\n T\n_1:=\phi( |T|)<\infty$ where
$|T|=\sqrt{T^*T}$. In the Type I setting this is the usual trace
class ideal. We will denote the norm on $\LL^1(\cn,\phi)$ by
$\n\cdot\n_1$. An alternative definition in terms of singular
values is that $T\in\LL^1(\cn,\phi)$ if $\|T\|_1:=\int_0^\infty \mu_t(T) dt
<\infty.$
When ${\mathcal N}\neq{\mathcal
B}({\mathcal H})$, $\LL^1(\cn,\phi)$ need not be complete in this norm but it is
complete in the norm $||.||_1 + ||.||_\infty$. (where
$||.||_\infty$ is the uniform norm). We use the notation
$${\mathcal L}^{(1,\infty)}({\mathcal N},\phi)=
\left\{T\in{\mathcal N}\ : \Vert T\Vert_{_{{\mathcal
L}^{(1,\infty)}}} :=   \sup_{t> 0}
\frac{1}{\log(1+t)}\int_0^t\mu_s(T)ds<\infty\right\}.$$

%There are related ideals for $p>1$: to describe them first set,
%for $p>1$,
%$$\psi_p(t)=\left\{\begin{array}{ll} t & \mbox{for } 0\leq t\leq 1\\
 %                                    t^{1-\frac{1}{p}} & \mbox{for } 1\leq t
%\end{array}\right..$$
%Then we define
%$${\mathcal L}^{(p,\infty)}({\mathcal N})
%=\left\{T\in{\mathcal N}\ : \Vert T\Vert_{_{{\mathcal
%L}^{(p,\infty)}}} := \sup_{t> 0}
%\frac{1}{\psi_p(t)}\int_0^t\mu_s(T)ds<\infty\right\}.$$ For $p>1$
%there is also the equivalent definition
%$${\mathcal L}^{(p,\infty)}({\mathcal N})
%=\left\{T\in{\mathcal N}\ : \sup_{t> 0}
%\frac{t}{\psi_p(t)}\mu_t(T)<\infty\right\}.$$ If
%$T\in\LL^{(p,\infty)}(\cn)$, then $T^p\in\LL^{(1,\infty)}(\cn)$.

 The reader
should note that ${\mathcal L}^{(1,\infty)}(\cn,\phi)$ is often taken to
mean an ideal in the algebra $\widetilde{\mathcal N}$ of
$\phi$-measurable operators affiliated to ${\mathcal N}$. Our
notation is however consistent with that of \cite{C} in the
special case ${\mathcal N}={\mathcal B}({\mathcal H})$. With this
convention the ideal of $\phi$-compact operators, ${\mathcal
  K}({\mathcal N})$,
consists of those $T\in{\mathcal N}$ (as opposed to
$\widetilde{\mathcal N}$) such that $\mu_\infty(T):=\lim
_{t\to \infty}\mu_t(T)  = 0.$

\begin{defn}\label{summable} A semifinite  spectral triple
$(\A,\HH,\D)$ relative to $(\cn,\phi)$
with $\A$ unital is
$(1,\infty)$-summable if 
$(\D-\lambda)^{-1}\in\LL^{(1,\infty)}(\cn,\phi)\ \mbox{for all}\ 
\lambda\in\C\setminus\R.$ 
\end{defn}

It follows that if $(\A,\HH,\D)$ is $(1,\infty)$-summable then it is
$n$-summable (with respect to the trace $\phi$) for all $n>1$.   
We next need to briefly discuss Dixmier traces. For
more information on semifinite Dixmier traces, see \cite{CPS2}.
For $T\in\LL^{(1,\infty)}(\cn,\phi)$, $T\geq 0$, the function \ben
F_T:t\to\frac{1}{\log(1+t)}\int_0^t\mu_s(T)ds \een is bounded. 
There are certain $\omega\in L^\infty(\R_*^+)^*$, \cite{CPS2,C}, which
define (Dixmier) traces on
 $\LL^{(1,\infty)}(\cn,\phi)$ by setting
$$ \phi_\omega(T)=\omega(F_T), \ \ T\geq 0$$
and  extending to all of
$\LL^{(1,\infty)}(\cn,\phi)$ by linearity.
For each such $\omega$ we write $\phi_\omega$ for
the associated Dixmier trace.
Each Dixmier trace $\phi_\omega$ vanishes on the ideal of trace class
operators. Whenever the function $F_T$ has a limit at infinity,
all Dixmier traces return that limit as their value. 
This leads to the notion of a measurable operator \cite{C,LSS},
that is, one on which all Dixmier traces take the same value.
%We use a different notation in this instance namely
%$\bigintcross$. So if $T\in\LL^{(1,\infty)}(\cn,\phi)$ is measurable, for
%any allowed functional $\omega\in L^\infty(\R_*^+)^*$ we have
%$$\phi_\omega(T)=\omega(F_T)=\bigintcross T.$$
%{\bf Example} Let $\D=\frac{1}{i}\frac{d}{d\theta}$ act on
%$\LL^2(S^1)$. Then it is well known that the spectrum of $\D$
%consists of eigenvalues $\{n\in\Z\}$, each with multiplicity one.
%So, using the standard operator trace, the function
%$F_{(1+\D^2)^{-1/2}}$ is
%$$ \frac{1}{\log 2N}\sum_{n=-N}^N(1+n^2)^{-1/2}$$
%and this is bounded. Hence $(1+\D^2)^{-1/2}\in\LL^{(1,\infty)}$
%and
%$$\mbox{Trace}_\omega((1+\D^2)^{-1/2})=\bigintcross(1+\D^2)^{-1/2}=2.$$

We now introduce (a special case of) 
the analytic spectral flow formula of 
\cite{CP1,CP2}. 
This formula starts with a semifinite spectral triple $(\A,\HH,\D)$
and computes the $\phi$ spectral flow from $\D$ to $u\D u^*$, 
where $u\in\A$ is unitary with $[\D,u]$ bounded, in the case
where $(\A,\HH,\D)$ is $n$-summable for $n> 1$ (Theorem 9.3 of \cite{CP2}):
\be sf_\phi(\D,u\D u^*)
=\frac{1}{C_{n/2}}\int_0^1\phi(u[\D,u^*](1+(\D+tu[\D,u^*])^2)^{-n/2})dt,
\label{basicformula}\ee
with $C_{n/2}=\int_{-\infty}^\infty(1+x^2)^{-n/2}dx$. 
This real number $sf_\phi(\D,u\D u^*)$
is a pairing of the $K$-homology class
$[\D]$ of $\mathcal A$ with the $K_1({\mathcal A})$ class $[u]$
\cite{CPRS2}. There is a geometric way to view this formula. It is shown in 
\cite {CP2} that
the functional $X\mapsto \phi(X(1+(\D+Y)^2)^{-n/2})$ 
determines an exact one-form for $X$ in the tangent space, ${\mathcal N}_{sa},$ of
an affine space $\D+{\mathcal N}_{sa}$ modelled on  ${\mathcal N}_{sa}$. 
Thus (\ref{basicformula})
represents the integral of this one-form along the path 
$\{\D_t=(1-t)\D+ tu\D u^*\}$
provided one appreciates that $\dot\D_t=u[\D,u^*]$ is a tangent vector
to this path.
%Moreover this formula is 
%scale invariant. By this we mean that if we replace $\D$ by $\epsilon\D$, for 
%$\epsilon>0$, in the right hand side of (\ref{basicformula}), then the left 
%hand 
%side is unchanged, since spectral flow is invariant with respect to change of 
%scale.
In \cite{CPRS2}, the local index formula in noncommutative geometry 
of \cite{CM} was extended to semifinite spectral triples. In the
simplest terms, the local index formula is a
pairing of a finitely summable spectral triple $(\A,\HH,\D)$ with
the $K$-theory of the $C^*$-algebra $\overline{\A}$. Our approach in this paper
is inspired by the following theorem (see also
\cite{CPRS2,CM,Hig}).
\begin{thm}[\cite{CPS2}] Let $(\A,\HH,\D)$ be an odd %$QC^\infty$
$(1,\infty)$-summable semifinite spectral triple, relative
to $(\cn,\phi)$. Then for $u\in\A$ unitary the pairing of $[u]\in
K_1(\overline{\A})$ with $(\A,\HH,\D)$ is given by
$$ \la
[u],(\A,\HH,\D)\ra=sf_\phi(\D,u\D u^*)=
\lim_{s\to 0^+}s\ \phi(u[\D,u^*](1+\D^2)^{-1/2-s}).$$ 
In particular, the
limit on the right exists.
\end{thm}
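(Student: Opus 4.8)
The strategy is to start from the analytic spectral flow formula \eqref{basicformula} and show that, in the $(1,\infty)$-summable regime, the $n$-summable formula for $n>1$ has a clean limiting value as $n\to 1^+$, which is realized by the residue $\lim_{s\to 0^+}s\,\phi(u[\D,u^*](1+\D^2)^{-1/2-s})$. First I would fix a unitary $u\in\A$ with $[\D,u]$ bounded, and for each $n>1$ write $sf_\phi(\D,u\D u^*)=\frac{1}{C_{n/2}}\int_0^1\phi\bigl(u[\D,u^*](1+(\D+tu[\D,u^*])^2)^{-n/2}\bigr)\,dt$. The left-hand side is an integer-valued (hence $n$-independent) spectral flow, so the real content is to evaluate the right-hand side in the limit. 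The constant behaves like $C_{n/2}\sim \frac{2}{n-1}$ as $n\to 1^+$, so one expects the integral to vanish like $(n-1)$ and the product to have a finite limit; rephrasing $n-1=2s$ turns the $n\to1^+$ limit into the $s\to0^+$ residue.

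The key technical steps are: (i) a perturbation argument showing that replacing $(1+(\D+tu[\D,u^*])^2)^{-n/2}$ by $(1+\D^2)^{-n/2}$ inside the trace changes the integrand by something whose contribution is $O(1)$ rather than $O\bigl((n-1)^{-1}\bigr)$, so that after multiplying by $C_{n/2}^{-1}\sim\frac{n-1}{2}$ it disappears in the limit — here one uses that $u[\D,u^*]\in\cn$ is bounded and the resolvent expansion, together with the fact that $(1+\D^2)^{-1/2}\in\LL^{(1,\infty)}(\cn,\phi)$ controls the relevant trace-norm estimates; (ii) carrying out the $t$-integration trivially once the integrand no longer depends on $t$, giving $\frac{1}{C_{n/2}}\phi\bigl(u[\D,u^*](1+\D^2)^{-n/2}\bigr)$; and (iii) identifying $\lim_{n\to1^+}\frac{1}{C_{n/2}}\phi\bigl(u[\D,u^*](1+\D^2)^{-n/2}\bigr)$ with $\lim_{s\to0^+}s\,\phi\bigl(u[\D,u^*](1+\D^2)^{-1/2-s}\bigr)$ via $n=1+2s$ and $\frac{1}{C_{n/2}}\sim s$. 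The existence of the residue limit then follows \emph{a posteriori} from the fact that the spectral flow it equals is a well-defined finite number independent of all choices.

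For steps (i)--(ii) I would lean on the one-form description already recalled in the excerpt: the functional $X\mapsto\phi(X(1+(\D+Y)^2)^{-n/2})$ is an exact one-form on the affine space $\D+\cn_{sa}$, so $sf_\phi(\D,u\D u^*)$ is genuinely the integral of that one-form along any path from $\D$ to $u\D u^*$; choosing the path and using closedness lets one deform to the straight-line path and extract the endpoint/base-point contributions cleanly. The precise trace-norm bookkeeping — showing the error terms are uniformly $O(1)$ in $n$ near $1$ — is the main obstacle, and it is exactly the place where one must use $\LL^{(1,\infty)}$-summability (Definition \ref{summable}) rather than mere $n$-summability for a single $n$: the estimates needed to kill the $t$-dependence uniformly degrade as $n\downarrow1$ unless one has the logarithmic-divergence control built into the $\LL^{(1,\infty)}$ norm. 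Once that uniform estimate is in place, the rest is the elementary asymptotics of $C_{n/2}$ and the substitution $n-1=2s$.
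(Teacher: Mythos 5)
Your proposal is correct and follows essentially the same route as the source: the paper quotes this theorem from \cite{CPS2} without proof, but the argument you outline (uniform trace-norm control of $(1+(\D+tu[\D,u^*])^2)^{-n/2}-(1+\D^2)^{-n/2}$, trivial $t$-integration, and the asymptotics $C_{n/2}\sim 2/(n-1)$ with the substitution $n-1=2s$) is precisely the mechanism of \cite[Lemma 6.1]{CPS2} and is reproduced verbatim in this paper's own proof of Theorem \ref{first}. The only slip is calling the semifinite spectral flow ``integer-valued'' --- it is merely a fixed real number --- but since all you use is its independence of $n$, the argument is unaffected.
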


\subsection{The Cuntz algebras and the canonical Kasparov module}\label{graphalg}

For $n\geq 2$, the Cuntz algebra \cite{Cu} on $n$ generators, $O_n$, is the 
(universal) $C^*$-algebra generated by
$n$ isometries $S_i$, $i=1,...,n$, subject only to the relation
$\sum_{i=1}^n S_iS_i^*=1.$ The projections $S_iS_i^*$ will
be denoted by $P_i$ and more generally we will write $P_\mu=S_\mu
S_\mu^*$.  For $\mu\in \{1,2,...,n\}^k={\bf n}^k$ we write 
$S_\mu=S_{\mu_1}S_{\mu_2}\cdots S_{\mu_k}$  and 
$S^*_{\mu}=S^*_{\mu_k}S^*_{\mu_{k-1}}\cdots S^*_{\mu_1}.$ Using
the fact that $S^*_iS_j=\delta_{ij}$, one can show that every word
in the $S_i,S^*_j$ can be written in the form $S_\mu S^*_\nu$,
where $\mu\in{\bf n}^k$ and $\nu\in{\bf n}^l$ are  multi-indices.
We will write $|\mu|=k$ and $|\nu|=l$ for the length of such
multi-indices.
As the family of monomials $\{S_\mu S_\nu^*\}$
is closed under multiplication and involution, we have
\begin{equation}
O_n=\clsp\{S_\mu S_\nu^*:\mu\in{\bf n}^k,\ \nu\in{\bf n}^m,\
k,m\geq 0\}.\label{cuntzspanningset}
\end{equation}
%The algebraic relations and the density of the subalgebra
%$\mbox{span}\{S_\mu S_\nu^*\}$ in $O_n$ play a critical role
%throughout the paper. 
%We adopt the conventions that vertices are
%paths of length 0, that $S_v:=p_v$ for $v\in E^0$, and that all
%paths $\mu,\nu$ appearing in (\ref{spanningset}) are non-empty; we
%recover $S_\mu$, for example, by taking $\nu=r(\mu)$, so that
%$S_\mu S_\nu^*=S_\mu p_{r(\mu)}=S_\mu$.
If $z\in \T^1$, then the family $\{zS_j\}$ is another
Cuntz-Krieger family which generates $O_n$, 
and the universal
property of $O_n$ gives a homomorphism
$\sigma_z:O_n\to O_n$ such that $\sigma_z(S_e)=zS_e$. The homomorphism $\sigma_{\overline z}$ is an
inverse for $\sigma_z$, so $\sigma_z\in\Aut O_n$, and a routine
argument shows that
$\sigma$ is a strongly continuous action of $\T^1$ on $O_n$. It is
called the \emph{gauge action}. Averaging over $\sigma$ with respect to normalised Haar measure
gives a positive, faithful expectation $\Phi$ of $O_n$ onto the fixed-point algebra
$F:=O_n^\sigma$:
\[
\Phi(a):=\frac{1}{2\pi}\int_{\T^1} \sigma_z(a)\,d\theta\ \mbox{ for
}\ a\in O_n,\ \ z=e^{i\theta}.
\]

%\section{Review of the theory for tracial graph algebras}

%\subsection{The canonical Kasparov module}

To simplify notation, we let $A=O_n$ be the Cuntz algebra and
$F=A^\sigma$, the fixed point algebra for the $\T^1$ gauge
action. 
%For convenience we will suppress the 
%notations $\D\otimes 1_k$ and so on. 
The algebras $A_c,F_c$ are defined as the finite linear
span of the generators.
Right multiplication makes $A$ into a right $F$-module, and
similarly $A_c$ is a right module over $F_c$. We define an
$F$-valued inner product $(\cdot|\cdot)_R$ on both these modules
by
 $ (a|b)_R:=\Phi(a^*b).$
\begin{defn}\label{mod} Let $X$ be the right $F$ $C^*$-module obtained by
completing $A$ (or $A_c$) in the norm
$$ \Vert x\Vert^2_X:=\Vert (x|x)_R\Vert_F=\Vert
\Phi(x^*x)\Vert_F.$$
\end{defn}
The algebra $A$ acting by left multiplication on  $X$
provides a representation of $A$ as adjointable operators on $X$.
Let $X_c$ be the copy of $A_c\subset X$. The $\T^1$ action on $X_c$ is
unitary and extends to $X$, \cite{pr}.
For all $k\in\Z$, the  projection onto the $k$-th spectral subspace
of the $\T^1$ action is the operator $\Phi_k$ on $X$:  
$$\Phi_k(x)=\frac{1}{2\pi}\int_{\T^1}z^{-k}\sigma_z(x)d\theta,
\ \ z=e^{i\theta},\ \ x\in X.$$ Observe
 that $\Phi_0$ restricts to $\Phi$ on $A$
and on generators of $O_n$ we have \begin{equation}\Phi_k(S_\al
S_\beta^*)=\left\{\begin{array}{lr}S_\al S_\beta^* & \ \
|\al|-|\beta|=k\\0 & \ \ |\al|-|\beta|\neq
k\end{array}\right..\label{kthproj}\end{equation}

We quote the following result from \cite{pr}.
\begin{lemma}\label{phiendo} The operators $\Phi_k$ are adjointable 
endomorphisms of the $F$-module $X$ such that $\Phi_k^*=\Phi_k=\Phi_k^2$ and
$\Phi_k\Phi_l=\delta_{k,l}\Phi_k$. If $K\subset\Z$ then the sum
$\sum_{k\in K}\Phi_k$ converges strictly to a projection in the
endomorphism algebra. The sum $\sum_{k\in\Z}\Phi_k$ converges to
the identity operator on $X$. For all $x\in X$, the sum
$x=\sum_{k\in\Z}\Phi_kx=\sum_{k\in\Z}x_k$ converges in $X$.
\end{lemma}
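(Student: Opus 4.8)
The plan is to prove all the assertions first on the dense submodule $A_c\subset X$ spanned by the monomials $S_\mu S_\nu^*$, where formula (\ref{kthproj}) makes everything transparent, and then transfer them to $X$ by continuity; the delicate point is the strict convergence of the infinite sums, so I would set up the finite partial sums as projections and exploit the resulting contraction estimates.

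First I would check that each $\Phi_k$ is a well-defined contraction and a right $F$-module map on $X$: since the gauge action extends to a strongly continuous unitary action $z\mapsto\sigma_z$ on $X$, the integrand $z\mapsto z^{-k}\sigma_z(x)$ is norm continuous on $\T^1$, so the Bochner integral defining $\Phi_k(x)$ exists in $X$ with $\Vert\Phi_k(x)\Vert_X\le\Vert x\Vert_X$, and $\sigma$-invariance of $F$ gives $\Phi_k(xf)=\Phi_k(x)f$. On $A_c$, (\ref{kthproj}) exhibits $\Phi_k$ as the coordinate projection onto the span of the monomials with $|\mu|-|\nu|=k$, which immediately yields $\Phi_k^2=\Phi_k$ and $\Phi_k\Phi_l=\delta_{k,l}\Phi_k$ there. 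For symmetry of $\Phi_k$ on $A_c$ I would write $x=\sum_j x_j$, $y=\sum_l y_l$ in spectral components (finitely many nonzero), use $\Phi_k(x)^*=\Phi_{-k}(x^*)$ together with the fact that $\Phi=\Phi_0$ kills all nonzero spectral subspaces, so that $(\Phi_k x|y)_R=\Phi(\Phi_k(x)^*y)$ and $(x|\Phi_k y)_R=\Phi(x^*\Phi_k(y))$ both reduce to $\Phi(\Phi_k(x)^*\Phi_k(y))$. Each of these identities is separately continuous in its two arguments by the contraction bound, hence passes to all of $X$: $\Phi_k$ is adjointable with $\Phi_k^*=\Phi_k=\Phi_k^2$, and $\Phi_k\Phi_l=\delta_{k,l}\Phi_k$.

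Now fix $K\subset\Z$ and for finite $S\subset K$ put $P_S=\sum_{k\in S}\Phi_k$; by the orthogonality relations each $P_S$ is a projection in the endomorphism algebra and $P_{S'}P_S=P_{S\cap S'}$, so the net $(P_S)$ is increasing. I would prove $(P_Sx)_S$ is Cauchy for every $x\in X$: for $x\in A_c$ only finitely many $\Phi_kx$ are nonzero, so $P_Sx$ is eventually constant (and equal to $x$ if $K=\Z$, by (\ref{kthproj})), while for general $x$ one picks $x'\in A_c$ with $\Vert x-x'\Vert_X<\varepsilon$ and a finite $S_0$ past which $P_Sx'$ is constant, so that for $S\supset S'\supset S_0$ the projection $P_S-P_{S'}$ annihilates $x'$ and $\Vert P_Sx-P_{S'}x\Vert_X=\Vert (P_S-P_{S'})(x-x')\Vert_X\le\varepsilon$. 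Hence $P_Sx\to P_Kx$; $P_K$ is a contraction, $P_KP_S=P_S$ forces $P_K^2=P_K$, and $P_K$ is self-adjoint as a bounded pointwise limit of the self-adjoint $P_S$. Since strict convergence in the endomorphism algebra here just means uniformly bounded pointwise norm convergence on $X$ (the adjoint condition coinciding, all operators being self-adjoint), this shows $\sum_{k\in K}\Phi_k=P_K$ strictly. Taking $K=\Z$ gives a projection agreeing with $1$ on the dense set $A_c$, so $\sum_{k\in\Z}\Phi_k=1$ strictly, whence $x=\sum_{k\in\Z}\Phi_kx=\sum_{k\in\Z}x_k$ in $X$. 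The main obstacle is exactly this last step — getting norm convergence of the partial sums on all of $X$, not just on $A_c$ where it is trivial; the projection (hence contraction) property of the finite partial sums, combined with density of $A_c$, is what makes the estimate go through.
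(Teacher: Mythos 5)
Your proof is correct, and the paper itself offers nothing to compare it against: Lemma \ref{phiendo} is simply quoted from \cite{pr}, where the argument is essentially the one you give (verify everything on the dense spectral decomposition of $A_c$ via (\ref{kthproj}), extend by the contraction property, and get strict convergence of the partial sums from uniform boundedness plus pointwise convergence on a dense submodule). The two points you rightly flag as delicate --- that a bounded symmetric module map must be exhibited with an explicit adjoint to be adjointable, and that strict convergence of a bounded self-adjoint net reduces to pointwise norm convergence on $X$ --- are both handled adequately.
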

The unbounded operator of the next proposition is of course
the generator of the
$\T^1$ action on $X$.
We refer to Lance's book, \cite[Chapters 9,10]{L}, for information
on unbounded operators on $C^*$-modules.

\begin{prop}\label{dee}\cite{pr} Let $X$ be the right $C^*$-$F$-module of
Definition \ref{mod}. Define $\D:X_\D\subset X$ to be the linear space
$$ X_\D=
\{x=\sum_{k\in\Z}x_k\in X:\Vert \sum_{k\in\Z}k^2(x_k|x_k)_R\Vert<\infty\}.$$
For $x\in X_\D$ define $ \D(x)=\sum_{k\in\Z}kx_k.$  Then $\D:X_\D\to X$ is a
is self-adjoint, regular operator on $X$.
\end{prop}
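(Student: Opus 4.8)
The plan is to construct the resolvent of $\D$ explicitly, exploiting the orthogonal decomposition $X=\bigoplus_{k\in\Z}\Phi_kX$ supplied by Lemma \ref{phiendo}, and then to invoke the standard characterisation of self-adjoint regular operators on a Hilbert $C^*$-module (\cite[Chapters 9,10]{L}): a densely defined symmetric operator $t$ is self-adjoint and regular as soon as both $t+i$ and $t-i$ have range all of the module. (This proposition is of course the content of \cite{pr}; what follows is the shape of the argument.)

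First I would record that $\D$ is densely defined and symmetric. The set of $x\in X$ with only finitely many nonzero components $x_k=\Phi_kx$ lies in $X_\D$, and it is dense because $\sum_{k\in\Z}x_k$ converges for every $x\in X$ by Lemma \ref{phiendo}. For $x,y\in X_\D$ the orthogonality relations $(\,\Phi_kx\,|\,\Phi_l y\,)_R=\delta_{kl}(\,x_k\,|\,y_k\,)_R$ give $(\D x|y)_R=\sum_k k(\,x_k\,|\,y_k\,)_R=(x|\D y)_R$, the series converging in $F$ by the module Cauchy--Schwarz inequality and the defining summability of $x,y$; this also explains why $\D x$ genuinely lies in $X$, since $\|\sum_{k\in K}kx_k\|_X^2=\|\sum_{k\in K}k^2(\,x_k\,|\,x_k\,)_R\|_F$ by orthogonality, so the partial sums of $\sum_k kx_k$ are Cauchy precisely when $\sum_k k^2(\,x_k\,|\,x_k\,)_R$ converges in $F$, which is how one must read the condition defining $X_\D$.

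Next, for $\mu\in\{i,-i\}$ I would define $R_\mu\colon X\to X$ by $R_\mu x=\sum_{k\in\Z}(k-\mu)^{-1}x_k$. Since $0\le|k-\mu|^{-2}(\,x_k\,|\,x_k\,)_R\le(\,x_k\,|\,x_k\,)_R$ and $\sum_k(\,x_k\,|\,x_k\,)_R=(x|x)_R$ converges in $F$, a domination argument shows the partial sums of $\sum_k(k-\mu)^{-1}x_k$ are Cauchy in $X$, so $R_\mu$ is well defined with $\|R_\mu\|\le1$, and a one-line inner-product computation shows it is adjointable with $R_\mu^{*}=R_{\bar\mu}$. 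Because $k^2|k-\mu|^{-2}\le1$, the same domination shows $R_\mu x\in X_\D$ for every $x\in X$, and then, comparing components,
\[
(\D-\mu)R_\mu x=\sum_k(k-\mu)(k-\mu)^{-1}x_k=\sum_k x_k=x,\qquad R_\mu(\D-\mu)x=x\quad(x\in X_\D).
\]
Thus $\D-\mu$ is a bijection $X_\D\to X$ with everywhere-defined bounded adjointable inverse $R_\mu$ for $\mu=\pm i$; in particular $\D$ is closed, and $\operatorname{Ran}(\D+i)=\operatorname{Ran}(\D-i)=X$. Combining this with the symmetry established above, the standard characterisation (\cite[Ch.\ 9--10]{L}) gives that $\D$ is self-adjoint and regular.

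The only real subtlety — and it is a matter of careful bookkeeping rather than a new idea — is that, unlike in the Hilbert-space case, symmetry together with vanishing deficiency indices does not suffice: regularity is an additional requirement, and it is exactly the explicit bounded resolvent $R_{\pm i}$ (not merely density of the ranges of $\D\pm i$) that delivers it. Throughout, one must be scrupulous about the meaning of the infinite sums $\sum_k\lambda_k(\,x_k\,|\,x_k\,)_R$ in the $C^*$-algebra $F$: they converge here only because they are dominated, termwise and in the positive cone, by the convergent series $(x|x)_R=\sum_k(\,x_k\,|\,x_k\,)_R$.
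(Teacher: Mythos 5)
Your argument is correct, and it is essentially the standard proof: the paper itself quotes this proposition from \cite{pr} without proof, and the proof there proceeds exactly as you do — density and symmetry of $\D$ on the algebraic span of the spectral subspaces, followed by the explicit bounded adjointable resolvents $\sum_k(k\mp i)^{-1}\Phi_k$ and Lance's criterion that a densely defined symmetric operator with $\D\pm i$ surjective is self-adjoint and regular. Your remarks on reading the defining condition of $X_\D$ as norm convergence of the positive series in $F$, and on regularity not following from symmetry alone, are exactly the right points of care.
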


{\bf Remark}. On generators in $O_n$ (regarded as
elements of $X_c\subset X$) we have 
$\D(S_\al S_\beta^*)=(|\al|-|\beta|)S_\al S_\beta^*.$

We will need the following technical result from \cite{pr} later:

\begin{lemma}\label{finrank}  For all $a\in A$ and $k\in\Z$,
$a\Phi_k\in End^0_F(X)$, the
 compact $F$ linear 
endomorphisms of the right $F$ module $X$. If $a\in A_c$ then
$a\Phi_k$ is finite rank.
\end{lemma}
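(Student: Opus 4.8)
The plan is to prove the finite-rank statement first, directly on the monomials, by exhibiting each spectral projection $\Phi_k$ as a \emph{finite} sum of rank-one module operators $\Theta_{\xi,\eta}\colon x\mapsto \xi\,(\eta|x)_R$, and then to deduce compactness for general $a\in A$ by a density argument. The only real computation is the first step; everything after that is formal.

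\textbf{Step 1 (the projections are finite rank).} For $k\ge 0$ I would show
\[
\Phi_k=\sum_{\mu\in{\bf n}^k}\Theta_{S_\mu,S_\mu},
\]
which is a sum of $n^k$ rank-one operators, hence lies in $\operatorname{End}^0_F(X)$. Since $\Phi_k$ and the right-hand side are both bounded adjointable operators and $A_c$ is dense in $X$, it suffices to check equality on the spanning set $\{S_\alpha S_\beta^*\}$; using $S_\mu^*S_\alpha=\delta_{\mu,\alpha_1\cdots\alpha_k}\,S_{\alpha_{k+1}\cdots\alpha_{|\alpha|}}$ (when $|\alpha|\ge k$) together with the degree formula \eqref{kthproj} for $\Phi$, one finds $\sum_\mu S_\mu\,\Phi(S_\mu^*S_\alpha S_\beta^*)$ equals $S_\alpha S_\beta^*$ when $|\alpha|-|\beta|=k$ and vanishes otherwise, which is exactly $\Phi_k(S_\alpha S_\beta^*)$. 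For $k=-m<0$ one argues by the mirror image, replacing $S_\mu$ by $S_\nu^*$: here $\Phi_{-m}=\sum_{\nu\in{\bf n}^m}\Theta_{\,n^{-m/2}S_\nu^*,\,n^{-m/2}S_\nu^*}$, the normalising factor $n^{-m}$ coming from $(1_X|S_\nu x)_R=(S_\nu^*|x)_R$ and $\sum_{|\nu|=m}S_\nu^*S_\nu=n^m$; again a finite sum of rank-one operators. Thus $\Phi_k\in\operatorname{End}^0_F(X)$ for every $k\in\Z$.

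\textbf{Step 2 ($a\Phi_k$ is finite rank for $a\in A_c$).} The representation of $A$ on $X$ by left multiplication is by adjointable operators and is norm-bounded, $\|bx\|_X^2=\|\Phi(x^*b^*bx)\|\le\|b\|_A^2\,\|x\|_X^2$. Since $\operatorname{End}^0_F(X)$ is an ideal in $\operatorname{End}_F(X)$, the composite of the adjointable operator ``left multiplication by $a$'' with the finite-rank operator $\Phi_k$ of Step 1 is finite rank; as $A_c$ is spanned by the monomials $S_\gamma S_\delta^*$, this gives that $a\Phi_k$ is a finite sum of $\Theta$'s for all $a\in A_c$.

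\textbf{Step 3 ($a\Phi_k$ is compact for general $a\in A$).} Choose $a_j\in A_c$ with $a_j\to a$ in the $C^*$-norm of $A$. By the bound in Step 2, $a_j\Phi_k\to a\Phi_k$ in the operator norm of $\operatorname{End}_F(X)$; each $a_j\Phi_k$ is finite rank and $\operatorname{End}^0_F(X)$ is norm-closed, so $a\Phi_k\in\operatorname{End}^0_F(X)$.

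\textbf{Expected obstacle.} The substance is entirely in Step 1: checking that the proposed finite sums of rank-one operators really coincide with the spectral projections, and in particular that they kill the ``wrong'' spectral subspaces. This is a matter of careful bookkeeping with the Cuntz relations $S_i^*S_j=\delta_{ij}$, $\sum_iS_iS_i^*=1$ and the grading \eqref{kthproj}; once it is in place Steps 2 and 3 are routine. (Equivalently, Step 1 can be taken from \cite{pr}: it is just the statement that $\Phi_kX=\bigoplus_{|\mu|=k}S_\mu F$ is a finitely generated free submodule with orthonormal basis $\{S_\mu\}_{|\mu|=k}$, and $\Phi_k$ the associated projection.)
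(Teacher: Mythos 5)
Your proof is correct and is essentially the argument the paper relies on: Lemma \ref{finrank} is quoted from \cite{pr}, and the rank-one decompositions $\Phi_k=\sum_{|\mu|=k}\Theta^{R}_{S_\mu,S_\mu}$ (for $k\ge 0$) and $\Phi_{-k}=n^{-k}\sum_{|\mu|=k}\Theta^{R}_{S_\mu^*,S_\mu^*}$ that you establish in Step 1 are exactly the formulas the paper records immediately after the lemma. Note that for the Cuntz algebra your Steps 2 and 3 are subsumed by Step 1, since $a\Theta^{R}_{\xi,\eta}=\Theta^{R}_{a\xi,\eta}$ already makes $a\Phi_k$ finite rank for every $a\in A$, not just $a\in A_c$ (the density argument is only needed for the general locally finite graphs of \cite{pr}, where $\Phi_k$ is an infinite strictly convergent sum of rank-one operators).
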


Introduce the rank one operator
 $\Theta^{R}_{x,y}$ by $\Theta^{R}_{x,y}z=x(y|z)_R$. Then by \cite[Lemma 4.7]{pr}, 
for $k\geq 0$, $\Phi_k=\sum_{|\mu|=k}\Theta^{R}_{S_\mu,S_\mu}$
where for the Cuntz algebras the sum is finite.
For the negative subspaces the formula
in \cite{pr} gives, in the Cuntz algebras 
$\Phi_{-k}=\frac{1}{n^k}\sum_{|\mu|=k}\Theta^{R}_{S_\mu^*,S_\mu^*}
.$

\begin{thm} \cite{pr} Let $X$ be the right $F$ module of Definition \ref{mod}. Let
$V=\D(1+\D^2)^{-1/2}$. Then $(X,V)$ is an odd Kasparov module for
$A$-$F$ and so defines an element of $KK^1(A,F)$.
\end{thm}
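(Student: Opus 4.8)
The plan is to verify directly that the pair $(X,V)$ meets the four defining conditions of an odd Kasparov $A$--$F$-module, exploiting the fact that $\D$ carries the very explicit spectral resolution furnished by Lemma~\ref{phiendo} and Proposition~\ref{dee}. By Definition~\ref{mod}, $X$ is a right Hilbert $F$-module; it is countably generated, since the countable family $\{S_\al S_\beta^*\}$ generates it over $F$, and $A$ already acts on $X$ by adjointable operators. Since $\D$ is self-adjoint and regular, the functional calculus for regular operators on $C^*$-modules (\cite[Ch.~10]{L}) shows that $V=f(\D)$ with $f(t)=t(1+t^2)^{-1/2}$ is a well-defined adjointable operator with $\norm{V}\le1$ and $V=V^*$, so the condition $a(V-V^*)\in End^0_F(X)$ is trivially satisfied. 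It therefore remains to show that $a(V^2-1)\in End^0_F(X)$ and $[V,a]\in End^0_F(X)$ for $a$ in the dense $*$-subalgebra $A_c$; since these expressions are norm-bounded in $a$ and $End^0_F(X)$ is a norm-closed ideal, both conditions then pass to all $a\in A$.

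Because $\D$ equals $k$ on the range of the orthogonal projection $\Phi_k$, functional calculus gives $V\Phi_k=\Phi_kV=f(k)\Phi_k$, so that $Vx=\sum_{k\in\Z}f(k)\Phi_kx$ and $(1+\D^2)^{-1}x=\sum_{k\in\Z}(1+k^2)^{-1}\Phi_kx$ for every $x=\sum_k\Phi_kx\in X$. For $1-V^2=(1+\D^2)^{-1}$ the coefficients $(1+k^2)^{-1}$ tend to $0$ while the $\Phi_k$ are mutually orthogonal, so the partial sums $\sum_{|k|\le N}(1+k^2)^{-1}\Phi_k$ converge to $(1+\D^2)^{-1}$ in operator norm. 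Each $\Phi_k$ is finite rank, by the formulas $\Phi_k=\sum_{|\mu|=k}\Theta^{R}_{S_\mu,S_\mu}$ for $k\ge0$ and $\Phi_{-k}=n^{-k}\sum_{|\mu|=k}\Theta^{R}_{S_\mu^*,S_\mu^*}$, the sums being finite for the Cuntz algebras; hence those partial sums are finite rank and therefore $(1+\D^2)^{-1}\in End^0_F(X)$. Consequently $a(V^2-1)\in End^0_F(X)$ for every adjointable $a$, a fortiori for all $a\in A$.

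For the commutator it suffices, by linearity, to take $a=S_\mu S_\nu^*$, and we set $d=|\mu|-|\nu|$. Since $S_\mu S_\nu^*$ has $\T^1$-degree $d$ it intertwines spectral subspaces, $\Phi_kS_\mu S_\nu^*=S_\mu S_\nu^*\Phi_{k-d}$, and inserting this into the (strictly convergent) expansion of $V$ gives
\ben
[V,S_\mu S_\nu^*]=S_\mu S_\nu^*\sum_{j\in\Z}(f(j+d)-f(j))\,\Phi_j .
\een
The scalars $c_j=f(j+d)-f(j)=\int_j^{j+d}(1+t^2)^{-3/2}\,dt$ satisfy $|c_j|\le|d|\,(1+(|j|-|d|)^2)^{-3/2}$ for $|j|>|d|$, so $c_j\to0$; as in the previous paragraph, orthogonality and finite-rankness of the $\Phi_j$ then force $\sum_jc_j\Phi_j$ to be a norm limit of finite-rank endomorphisms, hence an element of $End^0_F(X)$, and left multiplication by the adjointable operator $S_\mu S_\nu^*$ keeps us inside the ideal $End^0_F(X)$. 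Thus $[V,a]\in End^0_F(X)$ for all $a\in A_c$, and by density for all $a\in A$. Having checked every condition, $(X,V)$ is an odd Kasparov $A$--$F$-module and therefore determines a class in $KK^1(A,F)$.

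The only delicate points are essentially bookkeeping: justifying the norm-convergent $\Phi_k$-expansions of $V^2-1$ and of the operator appearing in $[V,S_\mu S_\nu^*]$, and the legitimacy of interchanging the functional calculus of the \emph{regular} (unbounded) operator $\D$ with those series on the $C^*$-module $X$. What makes this routine here—rather than requiring the general Baaj--Julg passage from an unbounded to a bounded Kasparov module—is the discreteness of the spectrum of $\D$ (it is $\Z$) together with the finite rank of every $\Phi_k$ in the Cuntz-algebra case, which is exactly where the argument would need more care for a general graph.
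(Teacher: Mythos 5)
Your verification is correct, and it follows essentially the same route as the proof in \cite{pr} that the paper cites for this theorem: one checks directly that $V$ is a self-adjoint contraction and that $a(1-V^2)$ and $[V,a]$ are compact endomorphisms by expanding everything over the mutually orthogonal spectral projections $\Phi_k$ and using the intertwining relation $\Phi_k S_\mu S_\nu^*=S_\mu S_\nu^*\Phi_{k-|\mu|+|\nu|}$. The only difference is that you exploit the finite rank of each $\Phi_k$ (a feature special to the Cuntz algebras), whereas the argument in \cite{pr} for general locally finite graphs must work with the compactness of $a\Phi_k$ for $a\in A$ (Lemma \ref{finrank}); your simplification is legitimate here.
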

Given the hypotheses of the Theorem, we may write $\D$ as
$\D=\sum_{k\in\Z}k\Phi_k.$

{\bf Remarks}. The constructions in \cite{pr} imply immediately
that  we obtain a class in
$KK^1(O_n,F)$.
Theorem 2.8 is part of an index theorem proved in
\cite{pr}. The pairing of $(X,V)$ with unitaries $u$ in $K_1(A)$ 
gives a $K_0(F)$
valued index, and writing $P=\mathcal{X}_{[0,\infty)}(\D)$, it is given by
\begin{equation}\label{pairing}
\la [u],[(X,V)]\ra=[\ker(PuP)]-[\mbox{coker}(PuP)]
\end{equation}
where the square brackets denote the $K_0$ class of the relevant
kernel projections. 
%\subsection{Spectral triple for a tracial graph
%$C^*$-algebra}
However, the main result of \cite{pr} (which fails for the Cuntz algebras)
requires a faithful semifinite gauge invariant lower
semi-continuous trace $\phi$ on $A$. 

\subsection{The mapping cone algebra and APS boundary conditions}

In \cite{CPR} we refined \cite{pr}
by showing that $K_0(F)$-valued  
indices could also be obtained  
from an even index pairing in $KK$-theory
using APS boundary conditions,
similar to \cite{APS3}. We briefly review this result, as it provides
an interpretation of the modular index pairings in Section 5.
We use the notation $M_k(B)$ to denote the algebra of
$k\times k$ matrices over an algebra $B$.
If $F\subset A$ is a sub-$C^*$-algebra of the $C^*$-algebra $A$, then
the mapping cone algebra for the inclusion is 
$$M(F,A)=\{f:\R_+=[0,\infty)\to A: f\ \mbox{is continuous and vanishes at
infinity},\ f(0)\in F\}.$$
When $F$ is an ideal in $A$ it is known that $K_0(M(F,A))\cong
K_0(A/F)$, \cite{Put}.  In general, $K_0(M(F,A))$ is the set of
homotopy classes of partial isometries $v\in M_k(A)$ with range and
source projections $vv^*,\ v^*v$ in $M_k(F)$, with operation the
direct sum and inverse $-[v]=[v^*]$. All this is proved in \cite{Put}.

Following \cite{CPR} we now explain our noncommutative analogue
of the Atiyah-Patodi-Singer index theorem \cite{APS1}.
Note that when we are working with matrix algebras over $A$ or $M(F,A)$
we inflate $\D$ to $\D\otimes I_k$ and so on.
\begin{defn} Let $(X,\D)$ be an unbounded  Kasparov module
and form the algebraic tensor product of  $\LL^2(\R_+)$ and $X$.
We complete the linear span of the elementary tensors in the algebraic 
tensor product (these are functions
from $\R_+$ to $X$) in the norm arising from the inner product
$$\la\xi,\eta\ra:=\int_0^\infty(\xi(t)|\eta(t))_Xdt$$
and write the completion as  $\LL^2(\R_+)\otimes X$ and denote this space by
$\E.$
An {\bf extended $\LL^2$-function} $f:\R_+\to X$ is a
function of the form $f=g+x_0$ such that
$g$ is in $\LL^2(\R_+)\otimes X$ and $x_0$ is a constant function with
$\D(x_0)=0$, that is $x_0\in{\rm ker}(\D)=\Phi_0(X).$ We denote the space of 
extended $\LL^2$-functions by $\hat{\E}$ and define the $F$-valued inner product 
on $\hat{\E}$ by $\la g+x|h+y\ra_{\hat{\E}} := \la g|h\ra_{\E} +\la x|y\ra_X.$
\end{defn} 

Now, certain Kasparov
$A,F$-modules extend to Kasparov $M(F,A),F$-modules:

\begin{prop}[\cite{CPR}]\label{thmone} Let $(X,\D)$ be an ungraded 
unbounded Kasparov module
for $C^*$-algebras $A,F$ with $F\subset A$ a subalgebra
such that $\overline{AF}=A$. Suppose that
$\D$ also commutes with the left action of $F\subset A$, and that $\D$
has 
discrete spectrum. Then the pair
$$(\hat{X},\hat\D)=\left(\bca \E\\ \hat{\E} \eca,
\bma 0 & -\p_t+\D\\
\p_t+\D & 0\ema\right)$$
with APS boundary conditions is a graded unbounded Kasparov module for
the mapping cone algebra $M(F,A)$.
\end{prop}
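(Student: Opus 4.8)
The plan is to verify the three ingredients of an (even) unbounded Kasparov module for $M(F,A)$: (i) $\hat{X}$ is a graded right $C^*$-$F$-module carrying a $*$-representation of $M(F,A)$ by adjointable operators; (ii) $\hat{\D}$ is self-adjoint and regular on $\hat{X}$ once we specify the domain via the APS boundary condition; and (iii) the commutators $[\hat{\D},f]$ are bounded for $f$ in a dense subalgebra of $M(F,A)$, and $f(\hat{\D}\pm i)^{-1}$ is compact for all $f\in M(F,A)$. The grading is the obvious one making $\E$ the even part and $\hat{\E}$ the odd part, with grading operator $\mathrm{diag}(1,-1)$; one checks immediately that $\hat{\D}$ is odd for this grading.

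The first step is to make sense of the representation of $M(F,A)$. An element $f\in M(F,A)$ is a norm-continuous $A$-valued function on $\R_+$ vanishing at infinity with $f(0)\in F$. It acts on $\E=\LL^2(\R_+)\otimes X$ as the pointwise (multiplication) operator $(f\xi)(t)=f(t)\xi(t)$, using the left action of $A$ on $X$ by adjointable operators; this is clearly adjointable with adjoint given by $f^*$. The subtlety is the action on $\hat{\E}$: a constant section $x_0\in\ker\D=\Phi_0(X)$ is sent to $t\mapsto f(t)x_0$, which need not be constant, but since $f(\infty)=0$ this section differs from the constant section $f(0)x_0\in\Phi_0(X)$ (note $f(0)\in F$ and $F$ commutes with $\D$, so $f(0)\Phi_0(X)\subset\Phi_0(X)$) by an honest $\LL^2$-section; hence $f$ preserves $\hat{\E}$ and acts by adjointable operators there too. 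So $f$ acts diagonally on $\hat{X}=\bca\E\\\hat{\E}\eca$ and commutes with the grading.

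The second and main step is the self-adjointness and regularity of $\hat{\D}$, which is where the APS boundary condition does its work and where the real difficulty lies. The operator $\p_t+\D:\E\to\hat{\E}$ and its formal adjoint $-\p_t+\D:\hat{\E}\to\E$ must be set up on domains so that $\hat{\D}$ becomes self-adjoint; this is precisely the content of the APS choice. Since $\D$ has discrete spectrum (by hypothesis; here $\D=\sum k\Phi_k$), we can diagonalise: decompose $X=\bigoplus_{k\in\Z}\Phi_k X$ and correspondingly reduce $\hat{\D}$ to a direct sum (over $k$) of the one-dimensional-in-$t$ operators $\bma 0 & -\p_t+k\\ \p_t+k & 0\ema$ on $\LL^2(\R_+)\otimes\Phi_kX$ (with $\hat{\E}$ contributing the extended part only in the $k=0$ summand). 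For $k\neq 0$ one imposes on $\p_t+k$ the boundary condition coming from the spectral projection of the boundary operator $\D$ — for $k>0$ a Dirichlet-type condition, for $k<0$ the complementary one — exactly as in Atiyah–Patodi–Singer; the point is that with this choice each summand is self-adjoint and regular, with resolvent that can be written explicitly via the kernel $e^{-k|t-s|}$-type Green's functions. For $k=0$ the operator is just $\bma 0 & -\p_t\\ \p_t & 0\ema$ acting on $\LL^2(\R_+,\Phi_0X)\oplus\hat{\E}_0$, and the extended-$\LL^2$ enlargement of the second component is exactly the device that makes $\p_t$ self-adjoint there (this is why $\hat{\E}$ rather than $\E$ appears in the lower slot). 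Assembling the summands, $\hat{\D}$ is self-adjoint and regular on $\hat{X}$; regularity over the $C^*$-module $F$ follows since the construction is natural in the coefficient module and the estimates are uniform in $k$. I expect this step — carefully specifying the APS domain and checking self-adjointness/regularity over a $C^*$-module rather than a Hilbert space — to be the main obstacle; the cleanest route is to cite Lance \cite[Chapters 9,10]{L} for the $C^*$-module functional calculus and reduce everything to the scalar ODE computation summand-by-summand.

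The third step is routine given the second. For the compactness of the resolvent: $(\hat{\D}\pm i)^{-1}$ decomposes over $k$, each summand is a manifestly compact (indeed, for fixed $k$, Hilbert–Schmidt-type in the $\LL^2(\R_+)$ variable) operator tensored with the identity on $\Phi_kX$, and $\Phi_kX$ is a ``finite-rank direction'' in the relevant sense via Lemma \ref{finrank} and Lemma \ref{phiendo}; summing over $k$ with the decay of the Green's kernels gives an element of $End^0_F(\hat{X})$, and then $f(\hat{\D}\pm i)^{-1}\in End^0_F(\hat{X})$ for all $f\in M(F,A)$. For boundedness of commutators: on the dense subalgebra of $f\in M(F,A)$ that are $C^1$ in $t$ with values in the smooth subalgebra where $[\D,f(t)]$ is bounded (e.g. $A_c$-valued), one computes
$$[\hat{\D},f]=\bma 0 & -[\p_t,f]+[\D,f]\\ [\p_t,f]+[\D,f] & 0\ema
=\bma 0 & -\dot f+[\D,f]\\ \dot f+[\D,f] & 0\ema,$$
which is bounded since $\dot f$ is a bounded $A$-valued function and $[\D,f(t)]$ is bounded uniformly in $t$ (using $f$ vanishes at infinity together with continuity of $t\mapsto[\D,f(t)]$). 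This subalgebra is dense in $M(F,A)$ and contains enough elements to generate it, so the Kasparov module axioms are satisfied. Finally one notes the construction is compatible with inflation to matrices $M_k(M(F,A))$ by tensoring with $I_k$, as already flagged before the statement. This completes the verification that $(\hat{X},\hat{\D})$ is a graded unbounded Kasparov module for $M(F,A)$.
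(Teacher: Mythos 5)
First, a contextual remark: the paper does not actually prove this proposition --- it is quoted from \cite{CPR}, and the text following the statement only spells out what the APS boundary conditions mean and defers the analysis to that reference. So your proposal can only be judged against the intended construction. Your overall architecture (diagonal action of $M(F,A)$, reduction of $\hat\D$ over the discrete spectrum of $\D$ to scalar half-line ODEs with APS conditions, commutator computation $[\hat\D,f]=\bigl(\begin{smallmatrix}0&-\dot f+[\D,f]\\ \dot f+[\D,f]&0\end{smallmatrix}\bigr)$ on a dense subalgebra) is the right one, but there are two concrete errors, and they occur precisely at the two places where the half-cylinder geometry actually bites.

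The first is your argument that $M(F,A)$ acts on $\hat{\E}$: you claim that $t\mapsto f(t)x_0$ differs from the constant section $f(0)x_0$ by an $\LL^2$ section ``since $f(\infty)=0$''. This is backwards. The difference is $(f(t)-f(0))x_0$, and because $f$ vanishes at infinity this tends to $-f(0)x_0$, which is nonzero whenever $f(0)x_0\neq 0$; a section with a nonzero limit at infinity is not in $\LL^2(\R_+)\otimes X$. So the pointwise action does not preserve $\hat{\E}=\E\oplus\ker\D$ in the way you assert, and defining an adjointable $*$-representation on the extended module is exactly the delicate point that the extended-$\LL^2$ device introduces (one has to either modify the action on the $\ker\D$ summand or work with a subalgebra of functions with genuine $\LL^2$ decay and check adjointability for the orthogonal inner product $\la g+x|h+y\ra_{\hat\E}=\la g|h\ra_\E+\la x|y\ra_X$). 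As written, this step fails.

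The second is the compactness argument. You assert that each spectral summand of the resolvent is ``manifestly compact (indeed \ldots Hilbert--Schmidt-type in the $\LL^2(\R_+)$ variable)'' and that summing over $k$ gives $(\hat\D\pm i)^{-1}\in End^0_F(\hat X)$. This is false: $\hat\D^2$ restricted to the $k$-th summand is $-\p_t^2+k^2$ on the half-line, which has purely continuous spectrum $[k^2,\infty)$, so its resolvent is not compact (the Green's kernel $e^{-c|t-s|}$ is not square-integrable over $\R_+\times\R_+$). The Kasparov axiom you correctly state at the outset --- compactness of $f(\hat\D\pm i)^{-1}$ for $f\in M(F,A)$ --- is the only thing that can be true, and its proof must use the vanishing of $f(t)$ as $t\to\infty$ to localise in the cylinder direction (a $C_0(\R_+)$-function-of-$t$ times function-of-$\p_t$ argument, combined with Lemma \ref{finrank} in the $X$ direction). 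Your argument never invokes the decay of $f$ in $t$, so it proves a false statement and omits the essential mechanism. Both gaps are repairable along standard lines, but they are not cosmetic: they are the two points at which the mapping-cone module genuinely differs from the original Kasparov module $(X,\D)$.
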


 By APS boundary conditions we mean let $P=\mathcal {X}_{\R_+}(\D)$
and take the domain
of $\hat\D$ to initially be   
$$\mbox{dom}\hat\D=\{\xi \in \mbox{span of elementary tensors in } 
\hat{X}:\ P\xi_1(0)=0,\ (1-P)\xi_2(0)=0,\
\hat\D\xi\in\hat{X}\}.$$ 
In \cite{CPR}
we show that APS boundary conditions make sense for the self adjoint
closure
of $\hat\D$ and no technical obstructions exist to working with  this
closure on its natural domain. Strictly speaking we should also mention
that the unbounded Kasparov
module is defined for a certain smooth algebra $\A\subset A$, and we
will suppose that this is the case, and that $F\subset\A$.
To explain the appearance in the second component of $\hat{X}$
of the right $F$, $C^*$-module 
$\hat{\E}$, we have to recall 
that the  different treatment of ${\rm ker}(\D)$ (which has the restriction 
of the inner product on $X$) is to account for
`extended $\LL^2$ solutions' corresponding to the zero eigenvalue of
$\D$, just as in \cite[pp 58-60]{APS1}.

If $v$ is a partial isometry in  $M_k(\A)^\sim$ (the minimal unitization)
setting $$e_v(t)=\bma
1-\frac{vv^*}{1+t^2} & -iv\frac{t}{1+t^2}\\iv^*\frac{t}{1+t^2}&
\frac{v^*v}{1+t^2}\ema,$$
defines $e_v$ as a projection
in $M_k(F,A)$ (the $k\times k$ matrices over the mapping cone). 
When $v$ is a unitary, denoted $u$ say, 
then $u$ is trivially a partial isometry with
range and source in (the unitization of) 
$M_k(F)$, so we obtain a class in $K_0(M(F,A))$ which
we denote by $[e_u]-\left[\bma 1 & 0\\ 0 & 0\ema\right]$.
In the statement of the next result 
(which is a special case of the main theorem of \cite{CPR})
we suppress the subscript $k$.

\begin{prop}[\cite{CPR}]\label{mainresult} 
 Let $(X,\D)$ be an ungraded unbounded Kasparov module
for (pre-)$C^*$-algebras $\A,F$ with $F\subset \A$ a subalgebra
such that $\overline{AF}=F$. Suppose that
$\D$ also commutes with the left action of $F\subset \A$, and that $\D$
has 
discrete spectrum. Let
$(\hat{X},\hat\D)$ be the
unbounded Kasparov $M(F,\A),F$ module of Proposition \ref{thmone}.
 Then for any unitary $u\in \A$ 
 such that {\bf $u^*[\D,u]$ is bounded and commutes
 with $\D$} we have the following equality of
index pairings with values in $K_0(F)$:
\bean \la [u^*],[(X,\D)]\ra&:=&{\rm Index}(Pu^*P)
%={\rm Index}(e_u(\p_t+\D)e_u)-{\rm Index}(\p_t+\D)\nno
=
\la [e_u]-\left[\bma 1 & 0\\ 0 & 0\ema\right],[(\hat{X},\hat\D)]\ra
\in K_0(F).\eean
Moreover, if $v\in \A$ is a  partial isometry, with
$vv^*,v^*v\in F$ and $v^*[\D,v]$ bounded and commuting with 
$\D$  we have
\bea\la [e_v]-\left[\bma 1 & 0\\ 0 & 0\ema\right],
[(\hat{X},\hat{\D})]\ra&=&-{\rm Index}(PvP:v^*vP(X)\to vv^*P(X))\in K_0(F)\nno
&=&{\rm Index}(Pv^*P:vv^*P(X)\to v^*vP(X))\in K_0(F).\label{pisomindex}\eea
 \end{prop}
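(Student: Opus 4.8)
The plan is to unwind the even mapping-cone pairing into the index of a single Fredholm operator and to compute that index directly, exploiting the product structure $\E=\LL^2(\R_+)\otimes X$ together with the discreteness of $\mathrm{spec}\,\D$. Writing the odd part of the graded operator of Proposition \ref{thmone} as $\hat\D_+=\p_t+\D\colon\E\to\hat{\E}$ (with APS domain), the pairing of a projection $e\in M_2(M(F,A)^\sim)$ with $[(\hat X,\hat\D)]$ is the $K_0(F)$-valued ${\rm Index}\!\left(e(\hat\D_+\otimes I_2)e\right)$. Thus the first asserted equality amounts to showing that the relative index ${\rm Index}\!\left(e_u(\hat\D_+\otimes I_2)e_u\right)-{\rm Index}(\hat\D_+)$ equals ${\rm Index}(Pu^*P)$, and the subtraction of the reference term is precisely what makes this difference Fredholm in the unitised setting.

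First I would pin down the baseline ${\rm Index}(\hat\D_+)$, which also exposes the role of $\ker\D$. Decomposing along the eigenspaces of $\D$, the kernel equation $\p_t\xi=-\D\xi$ forces $\xi(t)=e^{-t\D}\xi(0)$; square-integrability on $\R_+$ selects the strictly positive part of $\mathrm{spec}\,\D$, while the APS condition $P\xi(0)=0$ selects the strictly negative part, so $\ker\hat\D_+=0$. Dually, $\ker\hat\D_+^{*}$ solves $\p_t\xi=\D\xi$ with $(1-P)\xi(0)=0$; now the constant solutions valued in $\ker\D=\Phi_0(X)$ survive as extended-$\LL^2$ functions in $\hat{\E}$, giving ${\rm Index}(\hat\D_+)=-[\Phi_0(X)]$. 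This is exactly the zero-mode contribution that the reference projection $\bma 1&0\\0&0\ema$ is present to cancel.

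For the coupled term I would trivialise the line bundle $e_u(\E\otimes\C^2)\cong\E$ along the path of projections $e_u(t)$, which runs from $e_u(0)=\bma 0&0\\0&1\ema$ to $e_u(\infty)=\bma 1&0\\0&0\ema$ and winds through $u$. In this trivialisation the compressed operator becomes $\p_t+\D$ plus a connection term built from $e_u\,\p_t e_u$ and $[\D\otimes I_2,e_u]$, whose net effect is to conjugate the boundary Dirac operator by $u$ between the two ends, so that the relevant endpoint operators are $\D$ and $u\D u^{*}$. The relative index is then the spectral flow $\mathrm{sf}(\D,u\D u^{*})$ along the affine path $\{(1-s)\D+s\,u\D u^{*}\}$, and by the index pairing (\ref{pairing}) this equals the Toeplitz index ${\rm Index}(Pu^*P)$ (the passage $u\mapsto u^{*}$ recording the orientation of the Bott map $[u]\mapsto[e_u]$). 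The hypothesis that $u^{*}[\D,u]$ commutes with $\D$—equivalently $[\D,u^{*}\D u]=0$—is what makes this path genuinely affine, with $\D$ and $u\D u^{*}$ simultaneously diagonalisable, so that the spectral flow is a clean eigenvalue count.

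For a partial isometry $v$ with $vv^{*},v^{*}v\in F$, these source and range projections commute with $\D$ (as $\D$ commutes with $F$) and hence with $P$, so $PvP$ restricts to a Fredholm operator $v^{*}vP(X)\to vv^{*}P(X)$. The identical trivialisation argument, now with $e_v(0)=\bma 1-vv^{*}&0\\0&v^{*}v\ema$ and $e_v(\infty)=\bma vv^{*}&0\\0&0\ema$, yields $-{\rm Index}(PvP)$; the second equality in (\ref{pisomindex}) is then the elementary fact that the compression $Pv^{*}P$ is the adjoint of $PvP$ between the two corners, so ${\rm Index}(Pv^{*}P)=-{\rm Index}(PvP)$. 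The main obstacle throughout is analytic rather than formal: one must verify that $\hat\D$ with the coupled APS boundary condition is self-adjoint and regular on the twisted domain, that the compressed operator is genuinely Fredholm over $F$, and—most delicately—that the relative index equals the spectral flow with no leftover $\ker\D$ term. It is precisely here that the discreteness of $\mathrm{spec}\,\D$ (forcing the reduction to half-line ODEs) and the commutation hypothesis (aligning the two endpoint kernels) are indispensable.
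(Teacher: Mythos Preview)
This proposition is not proved in the paper: it is quoted as ``a special case of the main theorem of \cite{CPR}'' and no argument is supplied. There is therefore no in-paper proof against which to compare your proposal.

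That said, your outline is a reasonable sketch of the APS-on-the-cylinder strategy one expects to underlie such a result. The baseline computation of ${\rm Index}(\hat\D_+)=-[\Phi_0(X)]$ is correct and identifies exactly why the extended-$\LL^2$ target $\hat{\E}$ and the reference class $\left[\bma 1&0\\0&0\ema\right]$ are present. The reduction of the coupled index to a Toeplitz index by trivialising the projection path $t\mapsto e_u(t)$ is the natural route. The step you flag as delicate is genuinely the crux: passing from $e_u(\hat\D_+\otimes I_2)e_u$ to $\mathrm{sf}(\D,u\D u^*)$ and then to ${\rm Index}(Pu^*P)$ requires either an explicit gauge transformation that diagonalises the compressed operator over $F$ (this is where the hypothesis $[\D,u^*[\D,u]]=0$ does real work, not merely making a path ``affine''), or an independent $K_0(F)$-valued ``spectral flow equals even APS index'' statement in the $C^*$-module setting. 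Either of these is substantial, and without one of them spelled out your proposal remains a plan rather than a proof. Note also that invoking numerical spectral flow as an intermediate is not quite right here, since the target is $K_0(F)$ rather than $\Z$ or $\R$; the argument in \cite{CPR} works directly with kernel and cokernel projections as $F$-modules.
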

We remark that the hypothesis that $\D$ and $v^*[\D,v]$ commute can be 
considerably relaxed (with considerable effort).
We will see later 
how this theorem assists us when $\bar{\mathcal A} =O_n$ and $O_n$
is equipped with its natural KMS state.
Before turning to this we compute $K_0(M(F,A))$ for the
examples we have in mind. Let $A=O_n$ and let $\A$ be any dense
smooth subalgebra such that the fixed point algebra for the modular
automorphism, $F$, is contained in $\A$. 
Using $K_1(\A)= K_1(F)=0$, the six term sequence in $K$-theory becomes
$$0\to K_0(\cM(F,\A))\to K_0(F)\to K_0(\A)\to K_1(\cM(F,\A))\to 0.$$
Now $K_0(F)=\Z[1/n]$ while $K_0(A)=\Z_{n-1}$, \cite{Da}. A careful analysis of
the map $K_0(F)\to K_0(\A)$ shows that it is  induced by
inclusion, \cite{CPR}. Since
$K_0(\A)=\{0,Id,2Id,...,(n-2)Id\}$ for the Cuntz algebra,  
this map is onto. Hence
$K_1(\cM(F,\A))=0$ and $K_0(\cM(F,\A))=(n-1)\Z[1/n]$.

\section{The modular spectral triple of the Cuntz algebras}

The Cuntz algebras do not possess a faithful gauge invariant
trace. There is however a unique state which is KMS for the gauge 
action, namely $\psi:=\tau\circ\Phi:O_n\to\C$,
where $\Phi:O_n\to F$ is the expectation and $\tau:F\to\C$ the
unique faithful normalised trace.   As the Cuntz algebras 
satisfy the hypotheses of \cite{pr}  (they are  graph algebras 
of a locally finite
graph with no sources), the generator of the gauge action $\D$
acting on the right $C^*$-$F$-module $X$ gives us a Kasparov module
$(X,\D)$. As with tracial graph algebras, we take
this class as our starting point.
However we immediately encounter a difficulty that there are no unitaries to
pair with, since $K_1(O_n)=0$. Nevertheless, there are many partial
isometries with range and source in the fixed point algebra ($O_n$ is
generated by such elements), so the APS pairing of the previous
section is available. 
We would like to compute a numerical pairing using a spectral triple
and we use the Kasparov module for this purpose.

Let $\HH=\LL^2(O_n)$ be the GNS Hilbert space given by the faithful state
$\psi=\tau\circ\Phi.$ That is, the inner product on $O_n$ is defined by
$\la a,b\ra=\psi(a^*b)=(\tau\circ\Phi)(a^*b).$
Then $\D$ extends to a self-adjoint unbounded operator on $\HH$, \cite{pr},
and we denote this closure by $\D$ from now on. The representation $\pi$
of $O_n$ on $\HH$ by left multiplication is bounded and
nondegenerate, and the dense subalgebra $\mbox{span}\{\pi(S_\mu S_\nu^*)\}$ 
is in the smooth domain of the derivation 
$\delta=\mbox{ad}(|\D|)$.
We denote the left action of an element $a\in O_n$ by $\pi(a)$ so that
$\pi(a)b=ab$ for all $b\in O_n.$ This distinction between elements of
$O_n$ as vectors in $\LL^2(O_n)$ and operators on $\LL^2(O_n)$ is
sometimes crucial. Thus we
see that the central algebraic structures of the gauge spectral
triple on a tracial graph algebra are mirrored in this
construction.

What differs  from the tracial situation is the
analytic information. We begin by obtaining some information about
the trace on $F$, the corresponding state on $O_n$ and the
associated modular theory.

\begin{lemma}\label{tracegens} The trace $\tau:F\to\C$ satisfies 
$\tau(S_\mu S^*_\nu)=\delta_{\mu,\nu}\frac{1}{n^{|\mu|}}.$
\end{lemma}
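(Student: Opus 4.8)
The plan is to identify the trace $\tau$ on $F$ via its defining properties: $\tau$ is the unique faithful normalised trace on the fixed point algebra $F = O_n^\sigma$. First I would recall the structure of $F$ for the Cuntz algebra: $F$ is the UHF algebra $M_{n^\infty}$, obtained as the closure of the increasing union $\bigcup_k F_k$, where $F_k = \clsp\{S_\mu S_\nu^* : |\mu| = |\nu| = k\}$ is isomorphic to the matrix algebra $M_{n^k}(\IC)$, with the partial isometries $S_\mu S_\nu^*$ (for $|\mu| = |\nu| = k$) forming a system of matrix units. Since any trace must take the same value on any minimal projection of a full matrix algebra, and the $n^k$ diagonal projections $S_\mu S_\mu^*$ with $|\mu| = k$ sum to $1$ in $F_k$, normalisation forces $\tau(S_\mu S_\mu^*) = 1/n^k = 1/n^{|\mu|}$ when $|\mu| = |\nu| = k$.

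Next I would handle the general element $S_\mu S_\nu^*$ appearing in the statement, where a priori $|\mu|$ and $|\nu|$ need only satisfy $|\mu| = |\nu|$ for the element to lie in $F$ (recall from \eqref{kthproj} that $\Phi_k(S_\al S_\beta^*) = S_\al S_\beta^*$ exactly when $|\al| - |\beta| = k$, so membership in $F = \Phi_0(O_n)$ forces $|\mu| = |\nu|$; if $|\mu| \ne |\nu|$ then $S_\mu S_\nu^* \notin F$ and the claimed formula is vacuous, with the Kronecker delta $\delta_{\mu,\nu}$ understood to be $0$ in that case anyway). For $|\mu| = |\nu| = k$: if $\mu = \nu$ the computation above gives $1/n^{|\mu|}$; if $\mu \ne \nu$ then $S_\mu S_\nu^*$ is an off-diagonal matrix unit in $F_k \cong M_{n^k}$, and every trace annihilates off-diagonal matrix units (e.g. $\tau(S_\mu S_\nu^*) = \tau(S_\nu^* S_\mu S_\nu^* S_\nu \cdot (\text{shuffle}))$ — more cleanly, writing $e = S_\mu S_\mu^*$, $f = S_\nu S_\nu^*$ one has $S_\mu S_\nu^* = e (S_\mu S_\nu^*) f$ with $ef = 0$ since $\mu \ne \nu$ forces orthogonality of these range projections, hence $\tau(S_\mu S_\nu^*) = \tau(f e S_\mu S_\nu^*) = 0$ by the trace property). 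This matches $\delta_{\mu,\nu}\, n^{-|\mu|}$.

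Finally, I would note that it suffices to verify the formula on the spanning set $\{S_\mu S_\nu^*\}$ by linearity and the density of $F_c$ in $F$ together with continuity of $\tau$; the existence and uniqueness of the normalised trace on the UHF algebra is classical and can be cited (e.g. from \cite{Da}), so no independent construction is needed. The only mild subtlety — and the step I would treat most carefully — is the bookkeeping of which monomials $S_\mu S_\nu^*$ actually lie in $F$ and the reduction of the off-diagonal case; but this is entirely routine matrix-unit manipulation, so I do not anticipate any genuine obstacle. The whole lemma is essentially an unwinding of the identification $F \cong M_{n^\infty}$ and the uniqueness of its trace.
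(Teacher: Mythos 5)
Your proposal is correct and, once the UHF framing is stripped away, it is essentially the paper's own argument: the paper likewise uses the trace property and the Cuntz relations to show the off-diagonal monomials are annihilated (via $S_\nu^*S_\mu=\delta_{\mu,\nu}$ after cycling, which is exactly your $fe=0$ step) and that the $n^k$ mutually equivalent projections $S_\mu S_\mu^*$ summing to $1$ each get trace $n^{-k}$. The only difference is presentational — you cite the identification $F\cong M_{n^\infty}$ and uniqueness of its trace, whereas the paper carries out the same matrix-unit manipulations directly from the trace and normalisation properties of $\tau$.
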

\begin{proof} First of all, we must have $|\mu|=|\nu|$ in order that
$S_\mu S_\nu^*\in F$, and then
\bean \tau(S_\mu S^*_\nu) & = &
\tau(S_\mu S^*_\mu S_\mu S^*_\nu)=\qquad\tau(S_\mu S_\nu^*S_\nu S_\nu^*)\nno
&=& \tau(S_\mu S_\nu^* S_\mu S^*_\mu)=\qquad\tau(S_\nu S_\nu^*S_\mu
S_\nu^*)\nno
&=&  \delta_{\mu,\nu}\tau(S_\mu
S^*_\mu)\ \ =\qquad\delta_{\mu,\nu}\tau(S_\nu S_\nu^*).\eean 
Thus whenever $|\mu|=|\nu|$ we have $\tau(S_\mu S_\mu^*)=\tau(S_\nu S_\nu^*)$.
  Since there are exactly
$n^k$ distinct $S_\mu$ all with orthogonal ranges so that 
$\sum_{|\mu|=k}S_\mu S_\mu^* =1$, the result follows.
\end{proof}

Let $\S$ first denote the operator
$a \mapsto a^*$ defined on $O_{nc}$ as a subspace of $\LL^2(O_n)$. 
The conjugate-linear adjoint of $\S$ exists, is denoted $\F$ and will be 
explicitly calculated on the subspace
$O_{nc}$ in the next lemma. It satisfies
 $$\F(S_{\mu}S_{\nu}^*)=n^{(|\mu|-|\nu|)}S_{\nu}S_{\mu}^*.$$
In particular, $\F$ is densely defined so that $\S$ is closable. So we use the same symbol 
$\S$ to denote the closure 
and also $\F$ will denote the closure of $\F$ restricted to $O_{nc}$.
 Then $\S$  has a polar decomposition as 
\ben \S=J\Delta^{1/2}=\Delta^{-1/2}J,\ \ \F=J\Delta^{-1/2}=\Delta^{1/2}J,\ \  
\;{\rm where}\;\;\;\Delta=\F\S,\een 
where $J$
is an antilinear map, $J^2=1$. The Tomita-Takesaki modular theory, \cite{KR},
shows that \ben \Delta^{-it}\pi(O_n)''\Delta^{it}=\pi(O_n)'',\ \ \
J\pi(O_n)''J^*=(\pi(O_n)'')',\een where $\pi(O_n)''$ is the weak closure of
the left action of $O_n$ on $\LL^2(O_n)$. However, all of these operators can be
explicitly calculated on the subspace $O_{nc}$ which is in fact a 
Tomita algebra.

\begin{lemma}\label{tomita} The  algebra $ O_{nc}={\rm span}\{S_\mu S_\nu^*\}$ 
with the inner product, $\la a|b\ra=\psi(a^*b)=\tau\circ\Phi(a^*b)$ 
arising from the state 
$\psi=\tau\circ\Phi$ is a Tomita algebra 
(except for the trivial difference that 
our inner product is linear in the second coordinate). 
\end{lemma}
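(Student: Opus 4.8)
The plan is to verify directly that $O_{nc}$, equipped with the product inherited from $O_n$, the involution $a \mapsto a^*$, and the inner product $\langle a|b\rangle = \tau\circ\Phi(a^*b)$, satisfies the Tomita algebra axioms (see \cite{KR}): namely that it is a $*$-algebra with a positive-definite inner product, that left multiplication is bounded, that there is a one-parameter group $\Delta^{it}$ of algebra automorphisms which is an analytic complex one-parameter group on the algebra, that $\langle ab|c\rangle = \langle b|a^*c\rangle$, that $\langle \Delta a|b\rangle = \langle a^*|\Delta b^*\rangle$ (or the appropriate variant), and that $a \mapsto a^*$ together with $\Delta^z$ preserve $O_{nc}$. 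The key observation driving everything is that all the operators in sight act diagonally in the natural basis: from Lemma~\ref{tracegens}, the monomials $S_\mu S_\nu^*$ with distinct $(\mu,\nu)$ are pairwise orthogonal (since $\Phi(S_\nu S_\mu^* S_\alpha S_\beta^*)$ is a nonzero multiple of a trace of a single monomial only when $\mu=\alpha$, $\nu=\beta$), so $\{S_\mu S_\nu^*\}$ is an orthogonal basis of $\LL^2(O_n)$ with $\|S_\mu S_\nu^*\|^2 = n^{-\max(|\mu|,|\nu|)}$ after using $S_\mu S_\nu^* = S_\mu S_\nu^* S_\nu S_\nu^*$ and Lemma~\ref{tracegens}.

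First I would record the explicit formulas for $\S$, $\F$, $\Delta = \F\S$, and $J$ on $O_{nc}$: from the stated formula $\F(S_\mu S_\nu^*) = n^{|\mu|-|\nu|} S_\nu S_\mu^*$ and $\S(S_\mu S_\nu^*) = S_\nu S_\mu^*$ one computes $\Delta(S_\mu S_\nu^*) = n^{|\mu|-|\nu|} S_\mu S_\nu^*$, so $\Delta$ is a positive diagonal operator with $\Delta^{it}(S_\mu S_\nu^*) = n^{it(|\mu|-|\nu|)} S_\mu S_\nu^*$ and, more generally, $\Delta^z$ makes sense on all of $O_{nc}$ for every $z\in\IC$ because each monomial is an eigenvector; this gives the required entire one-parameter group $z\mapsto \Delta^z$ with $\Delta^z(O_{nc})\subseteq O_{nc}$. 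Then $J = \S\Delta^{-1/2} = \F\Delta^{1/2}$ acts by $J(S_\mu S_\nu^*) = n^{(|\nu|-|\mu|)/2} S_\nu S_\mu^* \cdot n^{(|\mu|-|\nu|)/2} = S_\nu S_\mu^*$... more carefully, $J(S_\mu S_\nu^*) = S_\nu S_\mu^*$ up to the correct scalar, is antilinear with $J^2 = \mathrm{id}$. That $\Delta^{it}$ is an algebra automorphism is immediate from $\Delta^{it}(S_\mu S_\nu^* \cdot S_\alpha S_\beta^*)$: the product of two monomials is (a scalar multiple of) a single monomial $S_{\mu'} S_{\nu'}^*$ with $|\mu'|-|\nu'| = (|\mu|-|\nu|)+(|\alpha|-|\beta|)$ whenever it is nonzero, since each $S_i^* S_j = \delta_{ij}$ neither creates nor destroys net degree. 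The compatibility identities $\langle ab|c\rangle = \langle b|a^*c\rangle$ and the ones relating $\S$, $\F$, $\Delta$ follow by checking them on basis monomials, where everything reduces to the single computation in Lemma~\ref{tracegens} together with bookkeeping of the degree function $|\mu|-|\nu|$; boundedness of left multiplication on $\LL^2(O_n)$ is already guaranteed since the GNS representation $\pi$ of the $C^*$-algebra $O_n$ is bounded.

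The main obstacle — though it is more bookkeeping than genuine difficulty — is organizing the verification of the algebra and inner-product axioms so that the various cancellations in products $S_\mu S_\nu^* S_\alpha S_\beta^*$ are handled uniformly: one must split into the cases $\nu = \alpha\gamma$, $\alpha = \nu\gamma$, and $\nu,\alpha$ incomparable, and track how $\Phi$ and $\tau$ interact with the resulting monomial, using Lemma~\ref{tracegens} each time. The only subtlety beyond this is making sure the densely-defined unbounded operators $\S$, $\F$, $\Delta^z$ genuinely restrict to $O_{nc}$ as stated and that $O_{nc}$ is a common core — but this is automatic here because $O_{nc}$ is spanned by eigenvectors of $\Delta$, so every $\Delta^z$ maps the linear span to itself, and the closability claims were already established in the discussion preceding the lemma. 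Hence $O_{nc}$ satisfies all the axioms of a Tomita algebra, with the sole, cosmetic deviation that our inner product is linear in the second variable rather than the first.
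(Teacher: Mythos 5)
Your overall strategy is the same as the paper's: write down $\mathcal{S}$, $\mathcal{F}$, $\Delta$, $J$ explicitly on the monomials and verify Takesaki's axioms generator by generator. However, the "key observation driving everything" is false, and two of your explicit formulas are wrong.

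First, the monomials $S_\mu S_\nu^*$ are neither linearly independent nor pairwise orthogonal, so they are not an orthogonal basis of $\LL^2(O_n)$. Indeed $S_\alpha S_\beta^*=\sum_{i=1}^n S_{\alpha i}S_{\beta i}^*$, and for a concrete failure of orthogonality take $a=1$ and $b=S_1S_1^*$: these are distinct monomials with $\langle a|b\rangle=\tau(S_1S_1^*)=1/n\neq 0$. More generally $\langle S_\mu S_\nu^*|S_\alpha S_\beta^*\rangle\neq 0$ whenever $\mu=\alpha\lambda$ and $\nu=\beta\lambda$ for some word $\lambda$ (or symmetrically), not only when $(\mu,\nu)=(\alpha,\beta)$. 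Consequently the verification of $\langle \mathcal{S}(a)|b\rangle=\langle\mathcal{F}(b)|a\rangle$ does not reduce to a diagonal computation; it genuinely requires the case analysis $\beta=\mu\lambda$, $\alpha=\nu\lambda$ carried out in the paper, where one checks that $S_\alpha S_\beta^*S_\mu S_\nu^*$ and $S_\mu S_\nu^*S_\alpha S_\beta^*$ are nonzero under the same conditions and that the powers of $n$ match. (Your norm formula is also off: $\|S_\mu S_\nu^*\|^2=\tau\circ\Phi(S_\nu S_\mu^*S_\mu S_\nu^*)=\tau(S_\nu S_\nu^*)=n^{-|\nu|}$, not $n^{-\max(|\mu|,|\nu|)}$.)

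Second, your formula for $\Delta$ has the wrong sign in the exponent. Since $\mathcal{S}(S_\mu S_\nu^*)=S_\nu S_\mu^*$ and $\mathcal{F}(S_\nu S_\mu^*)=n^{|\nu|-|\mu|}S_\mu S_\nu^*$, one gets $\Delta(S_\mu S_\nu^*)=\mathcal{F}\mathcal{S}(S_\mu S_\nu^*)=n^{|\nu|-|\mu|}S_\mu S_\nu^*$, not $n^{|\mu|-|\nu|}S_\mu S_\nu^*$; this sign matters for the rest of the paper, where $\Delta=n^{-\D}$. Relatedly, $J$ is not the bare flip: $J(S_\mu S_\nu^*)=n^{(1/2)(|\mu|-|\nu|)}S_\nu S_\mu^*$, and the scalar is exactly what makes $\mathcal{S}=J\Delta^{1/2}$ hold; your computation cancels the two half-powers when they should combine. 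These are correctable slips, but as stated the proposal does not establish the lemma: the false orthogonality claim removes the justification for the central adjointness computation, and the erroneous $\Delta$ would propagate incorrect eigenvalues into every subsequent use of the modular operator.
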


\begin{proof} 
 Since the inner product on $O_{nc}$ comes from the GNS construction
given by the faithful state $\psi=\tau\circ\Phi$ 
the left action of $O_{nc}$ on itself is involutive, faithful (and hence 
isometric),
and nondegenerate. This takes care of Takesaki's Axioms (I), (II), (III)
for a Tomita algebra \cite{Ta}.
Next, as mentioned above, the operator $\S$ on $O_{nc}$ is just the 
mapping on $O_n$,
$a\mapsto \S(a)= a^*$, which, on generators, is
$\S(S_\mu S_\nu ^*)=S_\nu S_\mu ^*.$ We define the conjugate linear map $\F$
on generators (and extend by linearity) via:
$$\F(S_\mu S_\nu ^*)=n^{(|\mu|-|\nu|)}S_\nu S_\mu ^*.$$
To see that $\F$ is the adjoint of $\S$ it suffices to check the defining 
equation: $\la \S(a)|b\ra = \la \F(b)|a\ra$ on generators $a=S_\alpha S_\beta ^*$
and $b=S_\mu S_\nu ^*.$ Then, 
$$\la \S(a)|b\ra=\tau\circ\Phi(S_\alpha S_\beta ^* S_\mu S_\nu ^*)\;\;\;
{\rm while}
\;\;\;
\la \F(b)|a\ra=\tau\circ\Phi(S_\mu S_\nu ^* S_\alpha S_\beta ^*)n^{(|\mu|-|\nu|)}.$$
Now, if $|\mu|+|\alpha|-|\nu|-|\beta|\neq 0$ then both terms are
$0$ hence equal. While if $|\mu|+|\alpha|-|\nu|-|\beta|= 0,$ then
$$\la \S(a)|b\ra=\tau(S_\alpha S_\beta ^* S_\mu S_\nu ^*)\;\;\;{\rm while}
\;\;\;
\la \F(b)|a\ra=\tau(S_\mu S_\nu ^* S_\alpha S_\beta ^*)n^{(|\mu|-|\nu|)}.$$
In the second case where $|\beta|-|\mu|=|\alpha|-|\nu|$, we assume that
$|\beta|-|\mu|=|\alpha|-|\nu|\geq 0$ as the case $\leq 0$ is very similar.

Now, $S_\alpha S_\beta ^* S_\mu S_\nu ^*=0$ unless $\beta=\mu\lambda$, whence
$S_\alpha S_\beta ^* S_\mu S_\nu ^*=S_\alpha S_\lambda ^* S_\nu ^*$ and
$|\lambda|=|\beta|-|\mu|.$  Then since $S_\alpha S_\lambda ^* S_\nu ^*\neq 0$
we must have $\alpha=\nu\lambda,$ and hence we have
$$S_\alpha S_\beta ^* S_\mu S_\nu ^*=S_\alpha S_\alpha ^*\;\;\;{\rm where}\;\;\;
\beta=\mu\lambda\;\;\;{\rm and}\;\;\;\alpha=\nu\lambda\;\;\;{\rm and}\;\;\;
|\lambda|=|\beta|-|\mu|.$$
Similary, $S_\mu S_\nu ^* S_\alpha S_\beta ^*=0$ unless
$ S_\mu S_\nu ^* S_\alpha S_\beta ^*=S_\beta S_\beta ^*$
where
$\alpha=\nu\gamma,$ $\beta=\mu\gamma$ and 
$|\gamma|=|\alpha|-|\nu|.$
We note that since $|\lambda|=|\beta|-|\mu|=|\alpha|-|\nu|=|\gamma|$, we have that
the two expressions $S_\alpha S_\beta ^* S_\mu S_\nu ^*$ and 
$S_\mu S_\nu ^* S_\alpha S_\beta ^*$ are both nonzero at the same time with the
same condition: $\beta=\mu\lambda$ and $\alpha=\nu\lambda.$
Finally, $\tau(S_\alpha S_\beta ^* S_\mu S_\nu ^*)=\tau(S_\alpha S_\alpha ^*)
=n^{-|\alpha|}$ while $$\tau(S_\mu S_\nu ^* S_\alpha S_\beta ^*)n^{(|\mu|-|\nu|)}
=\tau(S_\beta S_\beta ^*)n^{(|\mu|-|\nu|)}=n^{-|\beta|}n^{(|\mu|-|\nu|)}=\cdots
=n^{-|\alpha|}.$$ Thus, $\F$ is the adjoint of $\S$ and so both are closable. This
takes care of Takesaki's Axiom (IX).

We immediately deduce that
$\Delta(S_\mu S_\nu ^*)=\F\S(S_\mu S_\nu ^*)=n^{(|\nu|-|\mu|)}S_\mu S_\nu ^*$ so that
$$\Delta^{1/2}(S_\mu S_\nu ^*)=n^{(1/2)(|\nu|-|\mu|)} S_\mu S_\nu ^*$$
and $J(S_\mu S_\nu ^*)=n^{(1/2)(|\mu|-|\nu|)} S_\nu S_\mu ^*$
so that $\S=J\Delta^{1/2},$ $\F=\Delta^{1/2}J,$
as required. Moreover, for all $z\in \C$ we have:
$$\Delta^z(S_\mu S_\nu ^*)=n^{z(|\nu|-|\mu|)} S_\mu S_\nu ^*$$
where for $w\in\C$ we take $n^w:=e^{wlog(n)}.$
We remark that each $S_\mu S_\nu ^*$ is an eigenvector of $\Delta$ for the
nonzero eigenvalue $n^{(|\nu|-|\mu|)},$ and so each eigenvalue has
infinite multiplicity. 

We quickly review Takesaki's remaining axioms for a Tomita algebra.
First, there is the un-numbered axiom that each $\Delta^z :O_{nc}\to O_{nc}$ 
is an algebra homomorphism. Clearly,
each $\Delta^z$ is a linear isomorphism, and it suffices to check multiplicativity
on the generators. This is a calculation based on the following fact:
$(S_\mu S_\nu ^*)(S_\alpha S_\beta ^*)=0$ unless {\bf either}
$|\nu|\geq |\alpha|$ and $\nu=\alpha\lambda$ where
$(S_\mu S_\nu ^*)(S_\alpha S_\beta ^*)=S_\mu S_{\beta\lambda}^*$ {\bf or}
$|\nu|\leq |\alpha|$ and $\alpha=\nu\gamma$ where
$(S_\mu S_\nu ^*)(S_\alpha S_\beta ^*)=S_{\mu\gamma} S_\beta ^*.$ We remind the 
reader that this axiom says that as operators on $O_n$:
$$\pi(\Delta^z(a))=\Delta^z\pi(a)\Delta^{-z}\;\;\;{\rm in\;\; particular},
\;\;\;\pi(\Delta^{it}(a))=\Delta^{it}\pi(a)\Delta^{-it}.$$

Axiom (IV): $\S(\Delta^z(a))=\Delta^{-\overline{z}}(\S(a))$ for all $a\in O_{nc}$
and all $z\in \C.$ This is a straightforward calculation.

Axiom (V): $\la\Delta^z(a)|b\ra=\la a|\Delta^{\overline{z}}(b)\ra$ for all 
$a,b\in O_{nc},$ and $z\in\C.$ Another easy calculation.

Axiom (VI): $\la \Delta(\S(a))|\S(b)\ra=\la b|a\ra$ for all $a,b\in O_{nc}.$ This is equivalent to $\la \F(a)|\S(b)\ra=\la b|a\ra.$

Axiom (VII): The function $z\mapsto \la a|\Delta^z(b)\ra$ is analytic on $\C$
for each $a,b\in O_{nc}.$ Again an easy calculation since our inner products
are {\bf linear in the second }variable. Finally we have:

Axiom (VIII): For each $t\in\R$ the subspace $(1+\Delta^t)(O_{nc})$ is dense in
$O_{nc}.$ In fact, each generator $S_\mu S_\nu ^*$ is an eigenvector of
$(1+\Delta^t)$ with positive eigenvalue: $1+n^{t(|\nu|-|\mu|)},$
and hence $(1+\Delta^t)(O_{nc})=O_{nc}.$ 
\end{proof}

\begin{lemma} The group of modular
automorphisms  of the von Neumann algebra $O_n^{\prime\prime}$
generated by the left action of
$O_n$ on $\LL^2(O_n)$ (which is the same as the von Neumann algebra 
generated
by the left action of $O_{nc}$ on $\LL^2(O_{nc})=\LL^2(O_n)$) is given on
the generators by  
\be \s_t(\pi(S_\mu S^*_\nu)):=\Delta^{it}\pi(S_\mu
S^*_\nu)\Delta^{-it}=\pi(\Delta^{it}(S_\mu S^*_\nu))
=n^{it(|\nu|-|\mu|)}\pi(S_\mu S^*_\nu).\label{eq:kms-flow}\ee
\end{lemma}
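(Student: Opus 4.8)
The plan is to read this off from the Tomita algebra structure established in Lemma \ref{tomita} together with the standard theory of left Hilbert algebras, so almost all the work has in fact already been done. The key observation is that $1\in O_{nc}$ and $\psi(a)=\tau\circ\Phi(a)=\la 1,\pi(a)1\ra$, so $\Omega:=1\in\LL^2(O_n)$ is a cyclic and separating unit vector for $\pi(O_n)''$, and the inner product used in Lemma \ref{tomita} is exactly the GNS inner product of $\psi$. Hence the Tomita algebra $O_{nc}$ of Lemma \ref{tomita} is a full left Hilbert algebra with completion $\LL^2(O_n)$, with left von Neumann algebra $\LL(O_{nc})$ equal to the weak closure of the left action of $O_{nc}$ — which is $\pi(O_n)''$ since $O_{nc}$ is weakly dense — and with modular operator the closure of the $\Delta$ computed in Lemma \ref{tomita}, whose action is $\Delta^z(S_\mu S_\nu^*)=n^{z(|\nu|-|\mu|)}S_\mu S_\nu^*$.

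First I would invoke Takesaki's theorem (\cite{Ta,KR}): for a left Hilbert algebra with unit $\Omega$, the modular automorphism group of the pair $(\LL(O_{nc}),\omega_\Omega)$ is $\sigma_t=\operatorname{Ad}(\Delta^{it})$. Since $\omega_\Omega=\psi$, this identifies the modular group of $O_n''$ with respect to $\psi$ and gives the first equality in \eqref{eq:kms-flow} by definition. Next I would quote the homomorphism property $\pi(\Delta^z(a))=\Delta^z\pi(a)\Delta^{-z}$ for $a\in O_{nc}$, which was already verified inside the proof of Lemma \ref{tomita} (the ``un-numbered axiom''); taking $z=it$ yields the second equality $\Delta^{it}\pi(S_\mu S_\nu^*)\Delta^{-it}=\pi(\Delta^{it}(S_\mu S_\nu^*))$. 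Finally, substituting $z=it$ into the eigenvalue formula from Lemma \ref{tomita} gives $\pi(\Delta^{it}(S_\mu S_\nu^*))=n^{it(|\nu|-|\mu|)}\pi(S_\mu S_\nu^*)$, the last expression in \eqref{eq:kms-flow}.

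The only point needing care is the identification in the first paragraph: one must be sure that the natural weight attached to $O_{nc}$ by Tomita--Takesaki theory is genuinely the vector state $\omega_\Omega=\psi$ (not a multiple of it or a restriction to a corner) and that $\LL(O_{nc})=\pi(O_n)''$ rather than a proper subalgebra. Both are immediate here precisely because $1\in O_{nc}$, so $\Omega$ lies in the completion, acting on $\Omega$ recovers $O_{nc}$ densely, and $\psi$ is an honest faithful normalized state, leaving no normalization ambiguity. If one prefers to avoid quoting Takesaki entirely, an equally clean alternative is to define $\alpha_t\in\Aut\pi(O_n)''$ directly by $\alpha_t(\pi(S_\mu S_\nu^*))=n^{it(|\nu|-|\mu|)}\pi(S_\mu S_\nu^*)$ — well defined and $\sigma$-weakly continuous since each monomial is a $\Delta$-eigenvector — and then check the $\psi$-KMS condition for $\alpha_t$ by a routine computation on the spanning monomials using Lemma \ref{tracegens}; uniqueness of the modular automorphism group then forces $\sigma_t=\alpha_t$, which is \eqref{eq:kms-flow}.
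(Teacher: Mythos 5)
Your proof is correct and follows essentially the same route as the paper: the paper's own (one-line) proof likewise evaluates on the generators and invokes the un-numbered axiom $\pi(\Delta^{it}(a))=\Delta^{it}\pi(a)\Delta^{-it}$ from Lemma \ref{tomita} together with the eigenvalue formula $\Delta^{z}(S_\mu S_\nu^*)=n^{z(|\nu|-|\mu|)}S_\mu S_\nu^*$. Your additional care in identifying $\omega_\Omega$ with $\psi$ and $\LL(O_{nc})$ with $\pi(O_n)''$ (and the alternative KMS-plus-uniqueness argument) is sound but supplies detail the paper leaves implicit rather than a different method.
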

\begin{proof} This is a straightforward calculation obtained by evaluating
these operators on a generator $S_\alpha S_\beta ^*$ and using the
un-numbered Axiom that 
$\Delta^{it}$ is an algebra homomorphism on $O_{nc}.$
\end{proof}

%We have \ben (\tau\circ\Phi)(S_\mu
%S^*_\nu)=\delta_{\mu\nu}\frac{1}{n^{|\mu|}},\een so
%$$\la S_\mu S_\nu^*,S_\mu S_\nu^*\ra=\frac{1}{n^{|\nu|}}\ \ \mbox{while}\ \
%\la S(S_\mu S_\nu^*),S(S_\mu S_\nu^*)\ra=\frac{1}{n^{|\mu|}}.$$
%{\bf Could do with a better proof}. So
%$$ \Vert\Delta^{1/2}S_\mu S_\nu^*\Vert^2=
%n^{|\nu|-|\mu|}\Vert S_\mu S_\nu^*\Vert^2.$$
% The computation of the
%group of modular automorphisms now follows.

%REFER TO CHARLOTTE W. re the fact that the vN crossed product of the $\s_t$
%action from the above Tomita algebra is the type II algebra $\mathcal %N$ 
%below. 

\begin{rems*}
A special case of the KMS condition on the modular automorphism group of the state
$\psi$, \cite{Ta}, (for $t=i$) is the following: $\psi(xy)=\psi(\s_i(y)x)$
for $x,y\in\pi(O_n).$ The proof is elementary:
$$\tau\circ\Phi(xy)=\la x^*|y\ra=\la \S(x)|y\ra=\la \F(y)|x\ra=
\tau\circ\Phi(\S\F(y)x)=\tau\circ\Phi(\Delta^{-1}(y)x)
=\tau\circ\Phi(\s(y)x).$$
From now on we refer to this as the KMS condition for the state $\psi.$
\end{rems*}

\begin{cor} With $O_n$ acting on $\mathcal{H}:=\LL^2(O_n)$ we let $\D$ 
be the generator of the natural unitary implementation of the gauge action of 
$\T^1$ on $O_n.$ Then we have
\ben \Delta=n^{-\D}\  {\rm  or  }\
e^{it\D}=\Delta^{-it/log n}.\een
\end{cor}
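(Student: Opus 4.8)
The plan is to reduce the identity to the explicit action on the generators $S_\mu S_\nu^*$ and then pass from the dense subspace $O_{nc}$ to all of $\HH$. First I would assemble the two ingredients already at hand. By the Remark following Proposition~\ref{dee}, on $O_{nc}\subset\HH$ we have $\D(S_\mu S_\nu^*)=(|\mu|-|\nu|)S_\mu S_\nu^*$, so every generator is an eigenvector of $\D$, and hence of the unitary $e^{it\D}$, with $e^{it\D}(S_\mu S_\nu^*)=e^{it(|\mu|-|\nu|)}S_\mu S_\nu^*$. By Lemma~\ref{tomita}, $\Delta^z(S_\mu S_\nu^*)=n^{z(|\nu|-|\mu|)}S_\mu S_\nu^*$ for all $z\in\C$, with the convention $n^w=e^{w\log n}$; specialising to $z=-it/\log n$ gives $\Delta^{-it/\log n}(S_\mu S_\nu^*)=e^{it(|\mu|-|\nu|)}S_\mu S_\nu^*$. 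Thus $e^{it\D}$ and $\Delta^{-it/\log n}$ agree on every generator, and therefore on the dense subspace $O_{nc}$ of $\HH$.

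Next I would promote this to an operator identity. For each fixed $t\in\R$ both $e^{it\D}$ and $\Delta^{-it/\log n}$ are unitary (the former because $\D$ is self-adjoint, the latter by Tomita--Takesaki theory as in Lemma~\ref{tomita}), in particular bounded; two bounded operators agreeing on a dense subspace are equal, so $e^{it\D}=\Delta^{-it/\log n}$ for all $t$. Both sides are strongly continuous one-parameter unitary groups, so by Stone's theorem they have the same infinitesimal generator; differentiating at $t=0$ yields $i\D=-\tfrac{i}{\log n}\log\Delta$, where $\log\Delta$ denotes the self-adjoint generator of $(\Delta^{it})_{t\in\R}$. Hence $\log\Delta=-(\log n)\D$, and the Borel functional calculus gives $\Delta=e^{\log\Delta}=e^{-(\log n)\D}=n^{-\D}$, which is the asserted identity (manifestly equivalent to $e^{it\D}=\Delta^{-it/\log n}$).

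I do not expect any genuine obstacle; the content is a bookkeeping of exponents. The only points deserving a word of care are (i) that comparing $e^{it\D}$ and $\Delta^{-it/\log n}$ on $O_{nc}$ suffices --- immediate since these operators are bounded and $O_{nc}$ is dense --- and (ii), for the reader who prefers to argue directly with the unbounded generators rather than the unitary groups, that $O_{nc}$ is a core both for $\D$ (it is essentially self-adjoint there, as in \cite{pr}) and for $\log\Delta$ (each $S_\mu S_\nu^*$ is an eigenvector, hence an analytic vector, for $\Delta$ by Lemma~\ref{tomita}), so that agreement of $\D$ and $-\tfrac{1}{\log n}\log\Delta$ on $O_{nc}$ already forces equality of their closures. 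Either route completes the proof.
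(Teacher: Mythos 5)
Your proof is correct and follows the same route the paper implicitly intends: the Corollary is stated without proof precisely because it is immediate from the eigenvalue formulas $\D(S_\mu S_\nu^*)=(|\mu|-|\nu|)S_\mu S_\nu^*$ and $\Delta^z(S_\mu S_\nu^*)=n^{z(|\nu|-|\mu|)}S_\mu S_\nu^*$ established in the Remark after Proposition~\ref{dee} and in Lemma~\ref{tomita}. Your added care about density, boundedness of the unitary groups, and Stone's theorem is a sound way to make the extension from $O_{nc}$ to all of $\HH$ explicit.
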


%That is,
 %the flows generated by $\log\Delta$ and $\D$ are the same, up
%to a constant rescaling by $\frac{-1}{\log n}$. 

To continue, we recall the underlying right $C^*$-$F$-module, $X$,
which is the completion of $O_n$ for the norm $\Vert
x\Vert_X^2=\Vert\Phi(x^*x)\Vert_F$.

\begin{lemma}
Any $F$-linear endomorphism $T$ of the module $X$
which preserves the copy of $O_n$ inside $X$, extends uniquely to a bounded 
operator on the Hilbert space $\HH=\LL^2(O_n).$
\end{lemma}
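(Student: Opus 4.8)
The plan is to compare, on the common dense subspace $O_{nc}\subset O_n$, the Hilbert‑module norm $\|x\|_X^2=\|(x|x)_R\|_F$ with the GNS norm $\|x\|_\HH^2=\psi(x^*x)=\tau\bigl((x|x)_R\bigr)$. The bridge between the two is the trace $\tau$ on $F$: applying $\tau$ to an $F$‑valued inner product produces the scalar inner product of $\HH$. Since $\tau$ is a faithful normalised trace it is positive and order preserving, and in particular $\tau(c)\le\|c\|_F$ for $0\le c\in F$, so $\|x\|_\HH\le\|x\|_X$ on $O_n$ and the copy of $O_n$ sits contractively inside both completions. First I would record that $O_{nc}$ (hence $O_n$) is dense in $X$ by the very definition of $X$ as a completion of $O_n$, and dense in $\HH$ because $\psi$ is faithful; thus a uniform estimate on $O_n$ for the $\HH$‑norm will produce a unique bounded extension.

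The key step is to upgrade the boundedness of $T$ from a \emph{norm} statement to an \emph{operator} inequality with values in $F$. Here I use that an endomorphism of the Hilbert module $X$ is adjointable, so that $T^*T\in\operatorname{End}_F(X)$ is positive with $T^*T\le\|T\|^2\,\mathrm{id}_X$; writing $\|T\|^2\,\mathrm{id}_X-T^*T=B^*B$ with $B=(\|T\|^2\,\mathrm{id}_X-T^*T)^{1/2}\in\operatorname{End}_F(X)$, positivity of the $F$‑valued inner product gives, for every $x\in O_n$,
\[
(Tx|Tx)_R=(x|T^*Tx)_R\le\|T\|^2\,(x|x)_R\qquad\text{in }F .
\]
This uses the hypothesis that $T$ preserves $O_n$, so that $(Tx|Tx)_R=\Phi\bigl((Tx)^*(Tx)\bigr)$ genuinely lies in $F$ and the computation of the $\HH$‑norm of $Tx$ below makes sense.

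Applying $\tau$ to this inequality in $F$ and using its monotonicity then yields
\[
\|Tx\|_\HH^2=\tau\bigl((Tx|Tx)_R\bigr)\le\|T\|^2\,\tau\bigl((x|x)_R\bigr)=\|T\|^2\,\|x\|_\HH^2
\]
for all $x\in O_n$. Since $O_n$ is dense in $\HH$, the restriction $T|_{O_n}$ is uniformly continuous for the Hilbert‑space norm and extends uniquely to a bounded operator $\widetilde T$ on $\HH$ with $\|\widetilde T\|_{\B(\HH)}\le\|T\|_{\operatorname{End}_F(X)}$; uniqueness is automatic from density and continuity of both sides.

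The one point that needs care — the \emph{main obstacle} — is precisely this passage from norms to operator inequalities: boundedness of $T$ on $X$ only gives $\|(Tx|Tx)_R\|_F\le\|T\|^2\|(x|x)_R\|_F$, a statement about the $C^*$‑norm of $F$ which is too weak to feed to $\tau$ (as $\tau$ only sees the order structure, not the norm). The sharper inequality $(Tx|Tx)_R\le\|T\|^2(x|x)_R$ \emph{inside $F$} is what is actually required, and it rests essentially on adjointability of $T$, through $(Tx|Tx)_R=(x|T^*Tx)_R$ and the domination $T^*T\le\|T\|^2\,\mathrm{id}_X$ of a positive adjointable operator; a merely bounded $F$‑linear map on $X$ need not enjoy this property.
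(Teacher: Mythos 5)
Your proof is correct and follows essentially the same route as the paper: both reduce the claim to the $F$-valued operator inequality $(Tx|Tx)_R\le\Vert T\Vert_{\operatorname{End}}^2\,(x|x)_R$ in $F^+$ and then apply the trace $\tau$ together with density of $O_n$ in $\HH$ to get the unique bounded extension. The only difference is that the paper obtains this inequality by citing Lance's Proposition 1.2, which in fact holds for \emph{every} bounded $F$-linear map (no adjointability needed, since one can test against states of $F$), so your closing remark that the operator inequality rests essentially on adjointability overstates the point, though your derivation via $T^*T\le\Vert T\Vert^2\,\mathrm{id}_X$ is perfectly valid for the adjointable endomorphisms at hand.
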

\begin{proof}
For any $x\in X$ we have, by \cite[Proposition 1.2]{L},
$(Tx|Tx)_R\leq \Vert T\Vert^2_{End} (x|x)_R$ in $F^+.$
Letting $\Vert T\Vert_\infty$ denote the operator norm on  $\HH$ we estimate
using $x\in O_n:$
$$\Vert T\Vert_\infty^2 =\sup_{\Vert x\Vert_{\HH}\leq 1}
\la Tx|Tx\ra_{\HH}=\sup_{\Vert x\Vert_{\HH}\leq 1}
\tau((Tx|Tx)_R) \leq \sup_{\Vert x\Vert_{\HH}\leq 1}
\Vert T\Vert^2_{End} \tau((x|x)_R)=\Vert T\Vert^2_{End}.$$
\end{proof}
In particular, the finite rank endomorphisms of the pre-$C^*$ module
$O_{nc}$ (acting on the left) 
satisfy this condition, and we denote the algebra of all these
endomorphisms by $End_F^{00}(O_{nc})$.

\begin{prop}\label{tildetrace} Let $\cn$ be the von Neumann algebra
$  \cn=(End^{00}_F(O_{nc}))'',$
where we take the commutant inside $\B(\HH)$. Then $\cn$ is 
semifinite, and there exists a faithful,
semifinite, normal trace $\tilde\tau:\cn\to\C$ such that for all rank
one endomorphisms $\Theta^{R}_{x,y}$ of $O_{nc}$, 
$$\tilde\tau(\Theta^{R}_{x,y})=(\tau\circ\Phi)(y^*x),\ \ \ x,y\in O_{nc}.$$
In addition, $\D$ is affiliated to $\cn$ and $O_n$, acting on the left on
$X$, is a subalgebra of $\cn$.
\end{prop}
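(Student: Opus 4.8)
The plan is to exhibit $\mathcal N$ concretely as a semifinite von Neumann algebra by identifying $\operatorname{End}^{00}_F(O_{nc})$ inside the well-understood ambient algebra coming from the GNS construction for $\tau$ on $F$. First I would recall that the rank-one endomorphisms $\Theta^R_{x,y}$, $x,y\in O_{nc}$, acting by $\Theta^R_{x,y}z = x\Phi(y^*z)$, are bounded operators on $\HH = \LL^2(O_n)$ by the preceding Lemma, and that they span an involutive algebra (the product of two rank-one endomorphisms is again rank one, modulo the $F$-valued inner product). So $\operatorname{End}^{00}_F(O_{nc})$ is a $*$-subalgebra of $\B(\HH)$ and $\mathcal N$ is its weak closure, a bona fide von Neumann algebra. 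The key structural observation is that the $F$-valued inner product $\Phi(y^*z)$ composed with $\tau$ is exactly the GNS inner product $\psi = \tau\circ\Phi$; this is what will make the candidate functional $\tilde\tau(\Theta^R_{x,y}) := \psi(y^*x)$ both well-defined and tracial.

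The main steps, in order, are: (1) Check that $\tilde\tau$ as defined on finite-rank endomorphisms is well-defined (independent of the representation of an endomorphism as a finite sum of rank-one pieces) and positive — here I would use that $\Theta^R_{x,x}\geq 0$ as an operator and $\tilde\tau(\Theta^R_{x,x}) = \psi(x^*x)\geq 0$, together with faithfulness of $\psi$. (2) Verify the trace property $\tilde\tau(\Theta^R_{x,y}\Theta^R_{u,v}) = \tilde\tau(\Theta^R_{u,v}\Theta^R_{x,y})$: a direct computation gives $\Theta^R_{x,y}\Theta^R_{u,v} = \Theta^R_{x(y|u)_R, v} = \Theta^R_{x\Phi(y^*u),v}$, so both sides reduce to $\psi(v^* x\Phi(y^*u))$ versus $\psi(y^* u\Phi(v^*x))$, and these agree because $\psi = \psi\circ\Phi$ and $\Phi$ is a $\tau$-preserving conditional expectation, so the two are traces of the same element of $F$ — this is the place where Lemma~\ref{tracegens} (the explicit values of $\tau$ on generators) makes everything transparent. (3) Extend $\tilde\tau$ to a normal semifinite trace on all of $\mathcal N$: the standard route is to observe that $(\mathcal N, \tilde\tau)$ with its GNS representation is spatially isomorphic to a corner of the von Neumann algebra generated by $F$ in standard form tensored against $\B(\ell^2)$-type amplification coming from the module structure, so normality and semifiniteness are inherited; alternatively invoke the general fact (as in \cite{pr}, \cite{L}) that the von Neumann closure of the compact endomorphism algebra of a $C^*$-module over a von Neumann algebra with a fixed normal semifinite trace carries a canonical dual trace. (4) Finally, check $\D$ is affiliated to $\mathcal N$ — since $\D = \sum_k k\Phi_k$ and each $\Phi_k = \sum_{|\mu|=k}\Theta^R_{S_\mu,S_\mu}$ (for $k\geq 0$; the analogous formula from \cite{pr} for $k<0$) lies in $\operatorname{End}^{00}_F(O_{nc})\subset\mathcal N$, so all its spectral projections are in $\mathcal N$ — and that left multiplication by $a\in O_n$ is the limit of the finite-rank endomorphisms $a\Phi_k$ of Lemma~\ref{finrank}, hence lies in $\mathcal N$.

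I expect step (3), the passage from the densely-defined tracial weight on the finite-rank algebra to a genuine faithful normal semifinite trace on the weak closure $\mathcal N$, to be the main obstacle: one must control normality (lower semicontinuity under increasing nets) and faithfulness on the von Neumann algebra, not merely on the norm-dense subalgebra, and this requires either a careful GNS/standard-form argument or an appeal to the general $C^*$-module duality machinery. The trace property on the dense subalgebra (step 2) is essentially the computation already implicit in Lemma~\ref{tracegens} and the KMS remark, and affiliation of $\D$ (step 4) is immediate from the spectral decomposition $\D = \sum_k k\Phi_k$ together with Lemma~\ref{finrank}; so the real work is the functional-analytic extension in step (3).
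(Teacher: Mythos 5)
Your outline is essentially right, and your steps (1), (2) and (4) coincide with what the paper does: in particular your verification of the trace property via $\Theta^R_{x,y}\Theta^R_{u,v}=\Theta^R_{x(y|u)_R,v}$, reducing both sides to a trace of a product of two elements of $F$, is exactly the paper's rank-one computation, and your treatment of affiliation of $\D$ and of the embedding of $O_n$ via Lemma \ref{finrank} and $\sum_k\Phi_k=1$ matches the text. The divergence is at your step (3), which you correctly identify as the crux but do not carry out. The paper does not extend a densely defined weight from the finite-rank algebra, and it does not pass through a standard-form or amplification argument; instead it defines $\tilde\tau$ on all of $\cn^+$ \emph{from the outset} as an increasing supremum of explicit vector states, $\tilde\tau(T)=\omega_1(T)+\lim_{L\nearrow}\sum_{\mu\in L}\omega_\mu(T)$ with $\omega_\mu(T)=\la S_\mu,TS_\mu\ra+n^{-|\mu|}\la S_\mu^*,TS_\mu^*\ra$, so that normality is automatic (a supremum of normal positive functionals), and semifiniteness, faithfulness and the identity $\tilde\tau(\Theta^R_{x,y})=\psi(y^*x)$ on rank-one operators are then checked by running the argument of \cite[Lemma 5.11]{pr} almost verbatim. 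This device also dissolves your well-definedness worry in step (1), since the functional is defined on operators rather than on formal sums of rank-one symbols. Be cautious with your proposed alternative of invoking a general ``dual trace on compact endomorphisms'' result: the coefficient algebra $F$ here is only a $C^*$-algebra (UHF), not a von Neumann algebra with a fixed normal trace, so that machinery does not apply off the shelf, and some concrete construction of the kind the paper uses seems unavoidable.
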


\begin{proof}  We define $\tilde\tau$ as a supremum of an increasing
sequence of vector states, as in \cite{pr}, which ensures
that $\tilde\tau$ is normal. First for $|\mu|\neq 0$ we define for $T\in\cn$
$$\omega_\mu(T):=\la S_\mu,TS_\mu\ra+\frac{1}{n^{|\mu|}}\la
S_\mu^*,TS_\mu^*\ra.$$
Together with $\omega_1(T):=\la 1,T1\ra$, this gives a collection of
positive vector states on $\cn$. We define
$$\tilde\tau(T)=\omega_1(T)+\lim_{L\nearrow}\sum_{\mu\in L}\omega_\mu(T),$$
where $L$ ranges over the finite subsets of the finite path space
$E^*$ of the graph underlying $O_n$. With this definition, the proof
in \cite[Lemma 5.11]{pr} can be applied almost verbatim to this case. 
The only real change in the proof occurs on page 121 of \cite{pr}: the line 
before the phrase ``the last inequality following'' should be replaced by:
$$=\Vert T\Vert\sum_{s(\mu)=v,|\mu|=k} \tau(p_{r(\mu)})=
\Vert T\Vert n^k \tau(p_v)<\infty.$$

Rather than repeat the proof here, we simply observe for the reader's
benefit that to check the trace
property (on endomorphisms) only requires that $\tau$ is a trace on 
$F$, not all of $O_n$. Here is the formal
calculation for rank one operators:
\bean \tilde\tau(\Theta^{R}_{w,z}\Theta^{R}_{x,y})&=&
\tilde\tau(\Theta^{R}_{w(z|x),y})
=\tau\circ\Phi(y^*w(z|x))=\tau((y|w(z|x)))\nno
&=&\tau((y|w)(z|x))=\tau((z|x)(y|w))\nno
&=&\tilde\tau(\Theta^{R}_{x(y|w),z})=
\tilde\tau(\Theta^{R}_{x,y}\Theta^{R}_{w,z}).\eean

Next we must show that $\D$ is affiliated to $\cn$. However, we
have already noted that the spectral projections of $\D$ are finite
sums of rank one endomorphisms of $X_c$, in the paragraph immediately preceding Theorem 2.8. This
proves 
the claim. That $A_c$ embeds in $\cn$ follows from Lemma
\ref{finrank} and the fact that the $\Phi_k$ sum to the identity. Since $A$
is the unique $C^*$-completion of $A_c$ we see that $\pi$ embeds $A$ in $\cn.$
\end{proof}

Unfortunately, in contrast to the situation in \cite{pr},
this trace is not what we
need for defining summability.
This can be seen from the following calculations. For $k\geq 0$ \bean
\tilde\tau(\Phi_k)=\tilde\tau(\sum_{|\rho|=k}\Theta^{R}_{S_\rho,S_\rho})
=\tau(\sum_{|\rho|=k}(S_\rho|S_\rho))
=\tau(\sum_{|\rho|=k}S_\rho^*S_\rho)=\sum_{|\rho|=k}1
=n^k.\eean Similarly, for $k<0$ we have $\tilde\tau(\Phi_k)=n^k.$
Hence with respect to this trace we cannot expect $\mathcal D$ to satisfy
any summability criterion.

\begin{defn} %Now, since $\Delta = n^{-\D}=\sum_k n^{-k}\Phi_k,$ 
%$\Delta$ and $\D$ have the same spectral projections, the $\Phi_k.$ 
We define a new weight on $\cn^+$: let $T\in\cn^+$ then
$\tau_\Delta(T):=\sup_N\tilde\tau(\Delta_N T)$ where 
$\Delta_N=\Delta(\sum_{|k|\leq N}\Phi_k).$ 
\end{defn}

{\bf Remarks}. Since $\Delta_N$ is 
$\tilde\tau$-trace-class, we see that $T\mapsto\tilde\tau(\Delta_N T)$
is a normal positive linear functional on $\cn$ and hence 
$\tau_\Delta$ is a normal weight on $\mathcal N^+$ which is easily seen to 
be faithful and semifinite. 

We now give another way to define 
$\tau_\Delta$ which is not only conceptually useful but also makes a
number of important properties straightforward to verify.

\noindent {\bf Notation}. Let $\cm$ be the relative commutant
in $\mathcal N$ of the operator $\Delta$. Equivalently, $\cm$ is the
relative commutant of the set of spectral projections $\{\Phi_k | k\in\Z\}$  %and hence $\cm$ is also the relative commutant of the operator $%\D.$ 
Clearly,
$\cm=\sum_{k\in\Z}\;\Phi_k\cn\Phi_k.$

\begin{defn} As $\tilde\tau$ restricted to each  $\Phi_k\cn\Phi_k$ is a 
faithful finite trace with $\tilde\tau(\Phi_k)=n^k$
we define $\widehat\tau_k$ on $\Phi_k\cn\Phi_k$ to be $n^{-k}$
times the restriction of $\tilde\tau.$ Then, 
$\widehat\tau:=\sum_k\widehat\tau_k$
on $\cm=\sum_{k\in\Z}\Phi_k\cn\Phi_k$ is a faithful normal semifinite
trace $\widehat\tau$ with $\widehat\tau(\Phi_k)=1$ for all $k.$ 
\end{defn}
We use $\widehat\tau$ to 
give an alternative  expression for $\tau_\Delta$ below
This alternative might be avoidable
but at the expense of a detailed use of \cite{PT}. However, 
(see the bottom of 
page 61 of \cite{PT}), {\bf the semifiniteness} of $\tau_\Delta$ 
restricted to $\cm$
{\bf depends on} the existence of a normal $\tau_\Delta$-invariant projection 
(such as $\Psi$ defined below) from $\cn$ onto $\cm.$

\begin{lemma}\label{commutes} An element $m\in\cn$ is in $\cm$ if and
only if it is in the fixed point algebra of the action, $\s_t^{\tau_\Delta}$
on $\cn$ defined for $T\in\cn$ by 
$\s_t^{\tau_\Delta}(T)=\Delta^{it}T\Delta^{-it}.$ Both  $\pi(F)$ and the 
projections $\Phi_k$ belong to $\cM$. The map $\Psi :\cn\to\cm$ defined by
$\Psi(T)=\sum_k \Phi_k T\Phi_k$ is a conditional expectation onto $\cm$
and $\tau_{\Delta}(T)=\widehat\tau(\Psi(T))$ for all $T\in\cn^+.$ That is,
$\tau_{\Delta}= \widehat\tau\circ\Psi$ so that $\widehat\tau(T)=\tau_{\Delta}(T)$ for 
all $T\in\cm^+.$ Finally, if one of $A,B\in\cm$ is $\widehat\tau$-trace-class
and $T\in\cn$ then $\tau_\Delta(ATB)=\tau_\Delta(A\Psi(T)B)=
\widehat\tau(A\Psi(T)B).$
\end{lemma}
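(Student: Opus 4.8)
The plan is to prove the assertions in the stated order, since each feeds the next, leaning throughout on the single structural fact that the $\Phi_k$ diagonalise $\Delta$: by Lemma~\ref{tomita} (equivalently the Corollary above) $\Delta\Phi_k=n^{-k}\Phi_k$, so $\Delta^{it}=\sum_{k\in\Z}n^{-itk}\Phi_k$ with strong convergence, and $\Delta_N=\sum_{|k|\le N}n^{-k}\Phi_k$. For the first two sentences: the $\Phi_k$ are exactly the mutually orthogonal eigenprojections of $\Delta$, summing strongly to $1$, so a bounded operator commutes with $\Delta$ iff it commutes with every $\Phi_k$ --- i.e.\ lies in $\cm$ --- iff it commutes with the whole unitary group $\{\Delta^{it}\}_{t\in\R}$ (which generates the bounded functional calculus of $\Delta$), i.e.\ is fixed by $\s_t^{\tau_\Delta}$. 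Each $\Phi_k\in\cm$ because $\Phi_k\Phi_l=\delta_{kl}\Phi_k$ (Lemma~\ref{phiendo}); and for $a\in F$ the computation $\Phi_k(\pi(a)x)=\frac{1}{2\pi}\int_{\T^1}z^{-k}\s_z(a)\s_z(x)\,d\theta=\pi(a)\Phi_k(x)$, using $\s_z(a)=a$, shows $\pi(F)$ commutes with every $\Phi_k$; since $\pi(F)\subset\cn$ (Proposition~\ref{tildetrace}) this gives $\pi(F)\subset\cm$.

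Next I would recognise $\Psi(T)=\sum_k\Phi_kT\Phi_k$ as the canonical ``pinching'' map of the orthogonal family $\{\Phi_k\}$. For $T\in\cn^+$ the truncations $\sum_{|k|\le N}\Phi_kT\Phi_k$ increase and are dominated by $\|T\|$, hence converge strongly; extending by the Cartesian and positive decomposition makes $\Psi$ a well-defined normal, contractive, positive linear map on $\cn$. Idempotency, range $\cm$, the $\cm$-bimodule property and $\Psi(1)=1$ all follow from $\Phi_k\Phi_l=\delta_{kl}\Phi_k$ together with the fact that elements of $\cm$ commute with each $\Phi_k$ (so they pull through the sum and are fixed by $\Psi$); hence $\Psi$ is a normal conditional expectation of $\cn$ onto $\cm$.

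The heart of the matter is the identity $\tau_\Delta=\widehat\tau\circ\Psi$ on $\cn^+$. Fix $T\in\cn^+$. Since each $\Phi_k$ is $\tilde\tau$-trace-class, linearity and cyclicity of $\tilde\tau$ give $\tilde\tau(\Delta_NT)=\sum_{|k|\le N}n^{-k}\tilde\tau(\Phi_kT)=\sum_{|k|\le N}n^{-k}\tilde\tau(\Phi_kT\Phi_k)$; by the definition of $\widehat\tau_k$ as $n^{-k}$ times the restriction of $\tilde\tau$ to $\Phi_k\cn\Phi_k$, the $k$-th summand equals $\widehat\tau_k(\Phi_kT\Phi_k)=\widehat\tau(\Phi_kT\Phi_k)$, so $\tilde\tau(\Delta_NT)=\sum_{|k|\le N}\widehat\tau(\Phi_kT\Phi_k)$. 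All terms being non-negative, the supremum over $N$ is the convergent series $\sum_{k\in\Z}\widehat\tau(\Phi_kT\Phi_k)$, which by normality of $\widehat\tau$ equals $\widehat\tau\big(\sum_k\Phi_kT\Phi_k\big)=\widehat\tau(\Psi(T))$. This establishes $\tau_\Delta=\widehat\tau\circ\Psi$; restricting to $T\in\cm^+$, where $\Psi(T)=T$, gives $\tau_\Delta=\widehat\tau$ on $\cm^+$, and it also yields $\tau_\Delta\circ\Psi=\tau_\Delta$, the $\tau_\Delta$-invariance of $\Psi$ that the Remark preceding the lemma needs for semifiniteness of $\tau_\Delta|_\cm$. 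I expect this interchange of $\sup_N$ with the series --- and the care that only the bounded truncations $\Delta_N$, never the unbounded $\Delta$, ever appear --- to be the one genuinely delicate step; everything else is bookkeeping with $\Phi_k\Phi_l=\delta_{kl}\Phi_k$ and the trace property of $\tilde\tau$.

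Finally, for the ideal identity: with $A,B\in\cm$ and $T\in\cn$, pulling each $\Phi_k$ past $A$ and $B$ gives $\Psi(ATB)=\sum_k\Phi_kATB\Phi_k=A\big(\sum_k\Phi_kT\Phi_k\big)B=A\Psi(T)B$. If moreover one of $A,B$ is $\widehat\tau$-trace-class then, $\tau_\Delta$ and $\widehat\tau$ agreeing on $\cm$, it is $\tau_\Delta$-trace-class, so $ATB$ lies in the two-sided trace ideal $\LL^1(\cn,\tau_\Delta)$ and $A\Psi(T)B\in\cm$ is $\widehat\tau$-trace-class; the identity $\tau_\Delta=\widehat\tau\circ\Psi$ extends by linearity and polarisation to $\LL^1(\cn,\tau_\Delta)$, and combining the displayed computation with this extension gives $\tau_\Delta(ATB)=\widehat\tau(\Psi(ATB))=\widehat\tau(A\Psi(T)B)=\tau_\Delta(A\Psi(T)B)$, as required.
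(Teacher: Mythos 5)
Your proof is correct and follows essentially the same route as the paper: the first assertions are handled by the eigenprojection structure of $\Delta$ (which the paper dismisses as immediate), the identity $\tau_\Delta=\widehat\tau\circ\Psi$ comes from the same computation $\tilde\tau(\Delta_NT)=\sum_{|k|\le N}n^{-k}\tilde\tau(\Phi_kT\Phi_k)$ followed by taking suprema, and the final claim rests on the bimodule property $\Psi(ATB)=A\Psi(T)B$, which the paper attributes to Tomiyama's theorem and you verify directly. Your version simply supplies details the paper leaves implicit.
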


%\begin{proof}
%By definition for every $m\in\cM$ and all $t\in\R$ we have
%$\s_t^{\tau_\Delta}(m)=m$. So 
%$$\frac{d}{dt}\s_t^{\tau_\Delta}(m)=0=i\log(n)[\D,m].$$
%The existence of the derivative ensures that $m$ is in the domain of
%$[\D,\cdot]$ as a derivation of $\cn$. Then $[\D,m]=0 =[\Delta,m]$ im
 %the sense that
%$m$ commutes with all the spectral projections of $\D$ and hence $\Delta$.
%\end{proof}

\begin{proof}
The first two statements are immediate. Also, the fact that $\Psi$ is a 
unital norm one projection of $\cn$ onto $\cm$ (and hence
a normal conditional expectation by Tomiyama's theorem \cite{T}) is clear. Only 
the last assertions of the 
Lemma need proof. To this end let $T\in\cn^+,$ then
\bean
\tau_{\Delta}(T)&=&\sup_N\tilde\tau(\Delta_N T)=
\sup_N \tilde\tau(\Delta(\sum_{|k|\leq N}\Phi_k)T)=
\sup_N \tilde\tau(\sum_{|k|\leq N}\Delta\Phi_k T)\\
&=&\sup_N \tilde\tau(\sum_{|k|\leq N}n^{-k}\Phi_k T)=
\sup_N \sum_{|k|\leq N}n^{-k}\tilde\tau(\Phi_k T\Phi_k)=
\sum_{k\in\Z}n^{-k}\tilde\tau(\Phi_k T\Phi_k)=\widehat\tau(\Psi(T)).
\eean
Hence if $T\in\cm$ then $\widehat\tau(T)=\widehat\tau(\Psi(T))=\tau_\Delta(T).$
Finally the last statement follows from the fact that $\Psi(ATB)=A\Psi(T)B$
by Tomiyama's Theorem \cite{T}.
\end{proof}

\begin{lemma} The modular automorphism group $\s_t^{\tau_\Delta}$ of
$\tau_\Delta$ is inner and given by
$\s_t^{\tau_\Delta}(T)=\Delta^{it}T\Delta^{-it}$. The weight
$\tau_\Delta$ is a KMS weight for the group $\s_t^{\tau_\Delta}$, and 
$\s_t^{\tau_\Delta}|_{O_n}=\s_t^{\tau\circ\Phi}.$
\end{lemma}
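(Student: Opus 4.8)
The plan is to verify the three assertions in the order they are stated, relying on the alternative description $\tau_\Delta=\widehat\tau\circ\Psi$ from Lemma~\ref{commutes} as the main computational tool. First I would establish that $\s_t^{\tau_\Delta}(T):=\Delta^{it}T\Delta^{-it}$ is indeed \emph{the} modular automorphism group of the weight $\tau_\Delta$. The natural route is to check the KMS condition for $\tau_\Delta$ with respect to this group directly, using the Tomita-algebra structure already extracted in Lemma~\ref{tomita}: since each generator $S_\mu S_\nu^*$ is an eigenvector of $\Delta$ with eigenvalue $n^{|\nu|-|\mu|}$, the function $t\mapsto\s_t^{\tau_\Delta}(T)$ extends analytically, and for $T$ in the relevant analytic domain one computes $\tau_\Delta(ST)$ and $\tau_\Delta((\s_{-i}^{\tau_\Delta}T)S)$ and shows they agree. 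Here the key reduction is that, by the last statement of Lemma~\ref{commutes}, traces of products $\tau_\Delta(ATB)$ with one factor $\widehat\tau$-trace-class collapse to $\widehat\tau(A\Psi(T)B)$, and $\widehat\tau$ is an honest trace on $\cm$; the $\Delta^{it}$-conjugations are exactly what move a $\Phi_k$-graded piece into the grading where $\widehat\tau$ and $\tilde\tau$ differ by the compensating factor $n^{-k}$. Uniqueness of the modular group (given KMS) then identifies $\s_t^{\tau_\Delta}$ with this inner automorphism group. The innerness is immediate once $\Delta^{it}\in\cn$, which follows because $\Delta=n^{-\D}$ (Corollary above) and $\D$ is affiliated to $\cn$ by Proposition~\ref{tildetrace}, so the spectral projections $\Phi_k$ — hence all bounded Borel functions of $\D$, including $\Delta^{it}$ — lie in $\cn$.

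Second, the KMS property of $\tau_\Delta$ for $\s_t^{\tau_\Delta}$ is then just a restatement of what was verified, but I would phrase it cleanly: $\tau_\Delta$ is a faithful normal semifinite weight (already noted in the Remarks after its definition), and it satisfies the KMS condition at inverse temperature $1$ for $\s_t^{\tau_\Delta}$. The cleanest formulation to record is the analogue of the elementary KMS identity proved in the Remarks for $\psi$, namely $\tau_\Delta(xy)=\tau_\Delta(\s_i^{\tau_\Delta}(y)x)$ on the appropriate dense $\ast$-subalgebra (finite sums of $\Phi_k$-graded finite-rank endomorphisms together with $\pi(O_{nc})$), which one checks on generators using $\s_i^{\tau_\Delta}=\Delta^{-1}(\cdot)\Delta$ and the eigenvalue bookkeeping for $\Delta$.

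Third, for the compatibility $\s_t^{\tau_\Delta}|_{O_n}=\s_t^{\tau\circ\Phi}$: we have already identified in Lemma~\ref{tomita} and the subsequent Lemma that the modular group of the state $\tau\circ\Phi$ on $\pi(O_n)''$ is $\s_t(\pi(S_\mu S_\nu^*))=\Delta^{it}\pi(S_\mu S_\nu^*)\Delta^{-it}=n^{it(|\nu|-|\mu|)}\pi(S_\mu S_\nu^*)$. Since $\pi(O_n)\subset\cn$ (Proposition~\ref{tildetrace}) and the implementing operator $\Delta^{it}$ is literally the same in both constructions, the restriction of $\s_t^{\tau_\Delta}$ to $\pi(O_n)$ acts by exactly the same formula, so the two automorphism groups coincide on $O_n$. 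This last point is essentially a bookkeeping check that no second modular operator enters: the $\Delta$ built from $\F\S$ on $O_{nc}$ and the $\Delta$ appearing in $\tau_\Delta=\sup_N\tilde\tau(\Delta_N\,\cdot\,)$ are the same positive operator $n^{-\D}$.

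The step I expect to be the main obstacle is the first one — pinning down the analytic-domain technicalities in the KMS verification for the \emph{weight} $\tau_\Delta$ (as opposed to the state $\psi$), in particular ensuring that enough elements are $\tau_\Delta$-analytic and that the "one factor trace-class" hypothesis of Lemma~\ref{commutes} is actually available at the places where one wants to pass from $\tau_\Delta$ to $\widehat\tau$. This is exactly the point the authors flagged with the reference to \cite{PT} and to the existence of the normal $\tau_\Delta$-invariant expectation $\Psi$; I would handle it by working on the dense subspace generated by $\{\Phi_k\}_{k\in\Z}$ and $\pi(O_{nc})$, where $\Delta^{it}$ acts diagonally with explicit eigenvalues, so that all analyticity statements reduce to analyticity of $z\mapsto n^{z(|\nu|-|\mu|)}$, and all trace manipulations reduce to the genuine trace property of $\widehat\tau$ on $\cm$.
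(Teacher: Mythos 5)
Your proposal is correct in substance, but it takes a genuinely different and much more labour-intensive route than the paper. The paper disposes of the whole lemma in one line by citing \cite[Thm 9.2.38]{KR} (essentially the Pedersen--Takesaki Radon--Nikodym machinery): since $\tau_\Delta$ is, by construction, the weight $\tilde\tau(\Delta\,\cdot\,)$ for the f.n.s.\ trace $\tilde\tau$ and the positive operator $\Delta$ affiliated to $\cn$, that theorem hands you directly that $\tau_\Delta$ is a f.n.s.\ weight whose modular group is the inner group $T\mapsto\Delta^{it}T\Delta^{-it}$, with the KMS property built in; the restriction statement is then immediate because $\pi(O_n)\subset\cn$ and the implementing operator is the same $\Delta$. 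You instead verify the KMS condition by hand on the graded core generated by $\{\Phi_k\}$ and $\pi(O_{nc})$, reduce everything to the trace property of $\widehat\tau$ on $\cm$ via $\tau_\Delta=\widehat\tau\circ\Psi$, and then invoke the uniqueness of the modular group of a f.n.s.\ weight satisfying KMS. That is a valid argument and has the virtue of being self-contained and of exposing exactly where the $n^{-k}$ compensation between $\tilde\tau$ and $\widehat\tau$ enters; its cost is precisely the technical point you flag yourself, namely that the KMS characterization of the modular group for a \emph{weight} requires the invariance $\tau_\Delta\circ\s_t^{\tau_\Delta}=\tau_\Delta$ and the analytic-continuation condition to be checked on a set that is dense in the definition ideal $\mathfrak{n}_{\tau_\Delta}\cap\mathfrak{n}_{\tau_\Delta}^*$ in the appropriate topology, not merely weakly dense in $\cn$. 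In this example that is manageable (the $\Phi_k$ are $\tau_\Delta$-finite, sum to $1$, and diagonalize $\Delta$), but if you want to avoid writing out those domain verifications, the paper's citation of the general theorem is the shorter path.
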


\begin{proof} This follows from: \cite[Thm 9.2.38]{KR}, which gives
us the KMS properties of $\tau_\Delta$: the modular group is inner since
$\Delta$ is affiliated to $\cn.$ The final
statement about the restriction of the modular group to $O_n$ is clear.
\end{proof}

%For all intents and purposes, $\cn$ is the crossed
%product of $O_n''$ (a type $III_{1/n}$ von Neumann algebra) by the
%modular group. Just as in the classification of factors, we have
%extended the modular group to the crossed product to obtain a Gibbs
%type weight whose (inner) modular automorphism group extends the
%original one.

The reward for having sacrificed a trace on $\cn$ for a trace on $\cM$
is the following.

\begin{lemma}\label{tracesplit}
Suppose $g$ is a function on $\R$ such that $g(\D)$ is $\tau_\Delta$ trace-class
in $\cm$, then for all $f\in F$ we have
$$\tau_\Delta(\pi(f)g(\D))=\tau_\Delta(g(D))\tau(f)=
\tau(f)\sum_{k\in\Z} g(k).$$
\end{lemma}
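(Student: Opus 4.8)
The plan is to reduce everything to the tracial computation on $\cm$, using the conditional expectation $\Psi$ and the last statement of Lemma~\ref{commutes}. First I would observe that $\pi(f)\in\cm$ for all $f\in F$ (this is stated in Lemma~\ref{commutes}), and that $g(\D)\in\cm$ as well, since $\D=\sum_k k\Phi_k$ and each $\Phi_k\in\cm$, so $g(\D)=\sum_k g(k)\Phi_k$ commutes with $\Delta$. Thus both factors lie in $\cm$, and the product $\pi(f)g(\D)$ lies in $\cm$, with $g(\D)$ being $\widehat\tau$-trace-class by hypothesis (recall $\tau_\Delta|_{\cm}=\widehat\tau$). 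Hence $\tau_\Delta(\pi(f)g(\D))=\widehat\tau(\pi(f)g(\D))$, and we have reduced to computing a $\widehat\tau$-trace inside $\cm=\sum_k\Phi_k\cn\Phi_k$.

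Next I would use the block decomposition: since $\Phi_k g(\D)\Phi_l=\delta_{k,l}g(k)\Phi_k$ and similarly $\pi(f)$ is block-diagonal with respect to the $\Phi_k$, we get
$$\widehat\tau(\pi(f)g(\D))=\sum_{k\in\Z}\widehat\tau(\Phi_k\pi(f)\Phi_k\, g(k))=\sum_{k\in\Z}g(k)\,\widehat\tau(\Phi_k\pi(f)\Phi_k).$$
So the crux is the single identity $\widehat\tau(\Phi_k\pi(f)\Phi_k)=\tau(f)$ for all $k\in\Z$ and all $f\in F$. It suffices to check this on generators $f=S_\mu S_\nu^*$ with $|\mu|=|\nu|$, extending by linearity and continuity (using normality/faithfulness of $\widehat\tau$ and that both sides are norm-continuous in $f$). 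Since $\widehat\tau=n^{-k}\tilde\tau$ on $\Phi_k\cn\Phi_k$, I would compute $\tilde\tau(\Phi_k\pi(S_\mu S_\nu^*)\Phi_k)$ directly using the rank-one expansion of $\Phi_k$ from the paragraph before Theorem~2.8: for $k\geq 0$, $\Phi_k=\sum_{|\rho|=k}\Theta^R_{S_\rho,S_\rho}$, so
$$\tilde\tau(\Phi_k\pi(S_\mu S_\nu^*)\Phi_k)=\sum_{|\rho|,|\sigma|=k}\tilde\tau(\Theta^R_{S_\rho,S_\rho}\,\pi(S_\mu S_\nu^*)\,\Theta^R_{S_\sigma,S_\sigma}).$$
Using $\pi(S_\mu S_\nu^*)\Theta^R_{S_\sigma,S_\sigma}=\Theta^R_{S_\mu S_\nu^* S_\sigma, S_\sigma}$, and then that $S_\nu^* S_\sigma=\delta_{\nu,\sigma}$ (as $|\nu|=|\sigma|=k$), this collapses to $\sum_{|\rho|=k}\tilde\tau(\Theta^R_{S_\mu,S_\rho}\cdot\delta_{\nu,\rho})$-type terms; evaluating $\tilde\tau(\Theta^R_{S_\mu,S_\nu})=(\tau\circ\Phi)(S_\nu^* S_\mu)=\tau(S_\nu^* S_\mu)=\delta_{\mu,\nu}n^{-k}$ gives $\tilde\tau(\Phi_k\pi(S_\mu S_\nu^*)\Phi_k)=\delta_{\mu,\nu}n^k\cdot n^{-k}=\delta_{\mu,\nu}$, hence $\widehat\tau(\Phi_k\pi(S_\mu S_\nu^*)\Phi_k)=n^{-k}\delta_{\mu,\nu}\cdot\frac{1}{1}$... — more carefully, one gets $\widehat\tau(\Phi_k\pi(S_\mu S_\nu^*)\Phi_k)=\delta_{\mu,\nu}n^{-|\mu|}=\tau(S_\mu S_\nu^*)$ by Lemma~\ref{tracegens}. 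The negative-$k$ case is handled identically using $\Phi_{-k}=n^{-k}\sum_{|\rho|=k}\Theta^R_{S_\rho^*,S_\rho^*}$ together with the normalizing factor in $\widehat\tau$.

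Finally, assembling the pieces: $\tau_\Delta(\pi(f)g(\D))=\sum_k g(k)\tau(f)=\tau(f)\sum_k g(k)=\tau(f)\,\tau_\Delta(g(\D))$, where the last equality uses $\tau_\Delta(g(\D))=\widehat\tau(\sum_k g(k)\Phi_k)=\sum_k g(k)\widehat\tau(\Phi_k)=\sum_k g(k)$. The main obstacle I anticipate is purely bookkeeping: justifying the interchange of the (possibly infinite) sum over $k$ with $\widehat\tau$, which follows from normality of $\widehat\tau$ and positivity once one writes $f$ as a finite combination of generators (so that only finitely many blocks are affected per generator), and being careful that the hypothesis "$g(\D)$ is $\tau_\Delta$-trace-class" is exactly what makes $\sum_k|g(k)|<\infty$, so all the rearrangements are absolutely convergent. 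There is no deep point here — the real content is the block-diagonality forced by $\pi(F),\Phi_k\in\cm$ and the single normalization $\widehat\tau(\Phi_k)=1$.
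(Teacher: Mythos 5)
Your proof follows the paper's argument essentially verbatim: reduce to $\widehat\tau$ on the blocks $\Phi_k\cn\Phi_k$, prove $\widehat\tau(\pi(f)\Phi_k)=\tau(f)$ on generators $f=S_\mu S_\nu^*$ via the rank-one expansion of $\Phi_k$ together with Lemma~\ref{tracegens}, and extend to all $f\in F$ by norm continuity using the bound $|\tau_\Delta(TB)|\le\Vert T\Vert_\infty\Vert B\Vert_1$. The one blemish is the parenthetical ``$S_\nu^*S_\sigma=\delta_{\nu,\sigma}$ (as $|\nu|=|\sigma|=k$)'', which wrongly assumes the generator has word length $k$; in general $S_\nu^*S_\sigma$ is a shorter isometry or co-isometry and one must count the $n^{k-r}$ paths $\sigma$ of length $k$ extending $\mu=\nu$ (exactly the paper's computation $\sum_{|\sigma|=k}\tau(S_\sigma^*S_\mu S_\nu^*S_\sigma)=\delta_{\mu,\nu}n^{k-r}$), after which your stated conclusion $\widehat\tau(\Phi_k\pi(S_\mu S_\nu^*)\Phi_k)=\delta_{\mu,\nu}n^{-|\mu|}=\tau(S_\mu S_\nu^*)$ is indeed correct.
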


\begin{proof}
First note that $\tau_\Delta(g(\D))=\widehat\tau(\sum_{k\in\Z}g(k)\Phi_k)
=\sum_{k\in\Z}g(k)\widehat\tau(\Phi_k)=\sum_{k\in\Z}g(k).$
Now, $$\tau_\Delta(\pi(f)g(\D))=\widehat\tau(\pi(f)\sum_{k\in\Z}g(k)\Phi_k)
=\sum_{k\in\Z}g(k)\widehat\tau(\pi(f)\Phi_k)$$
$$=\sum_{k\in\Z}g(k)\widehat\tau_k(\pi(f)\Phi_k)
=\sum_{k\in\Z}g(k)n^{-k}\tilde\tau(\pi(f)\Phi_k).$$
So it suffices to see for each $k\in\Z$, we have 
$\tilde\tau(\pi(f)\Phi_k)=n^k\tau(f).$

For all $f\in F$, $f$ is a norm limit of finite sums of terms like 
$S_\al S_\beta^*$, $|\al|=|\beta|=r$. So we compute for $f=S_\alpha S_\beta^*.$
Recall that we have the formulae
$$\Phi_k=\sum_{|\mu|=k}\Theta^{R}_{S_\mu,S_\mu},\ \
k>0,\quad\quad
\Phi_k=n^{-k}\sum_{|\mu|=|k|}\Theta^{R}_{S_\mu^*,S_\mu^*},\ \ k<0.
\quad\quad \mbox{and }\Phi_0=\Theta^{R}_{1,1}$$ where,
with $\mu$ the path of length zero,  we are using the notation $1=S_\mu$.

First for $k\geq 0$
$$\tilde\tau(\pi(f)\Phi_k)=
\tilde\tau(\pi(f)\sum_{|\mu|=k}\Theta^{R}_{S_\mu,S_\mu})=
\tilde\tau(\sum_{|\mu|=k}\Theta^{R}_{fS_\mu,S_\mu})=
\sum_{|\mu|=k}\tau\circ\Phi({S_\mu^* f S_\mu})$$
$$=\sum_{|\mu|=k}\tau({S_\mu^* S_\alpha S_\beta^* S_\mu})=n^{k-r}\delta_{\alpha,
\beta}=n^k\frac{1}{n^{|\alpha|}}\delta_{\alpha,\beta}=n^k\tau(S_\alpha S_\beta^*)
=n^k\tau(f).$$
A similar calculation holds for $k<0$ using the other formula for $\Phi_k$
in this case. Since all $f\in F_c$ are
linear combinations $S_\al S_\beta^*$, $|\al|=|\beta|$, we get for all
$f\in F_c,$ the formula 
$$\tau_\Delta(\pi(f)g(\D))=\tau_\Delta(g(D))\tau(f)=
\sum_{k\in{\Z}}g(k)\tau(f).$$
Now, the right hand side is a norm-continuous function of $f.$
To see that the left side is norm-continuous we do it in more generality.
Let $T\in\cn$, then since $\widehat\tau$ is a trace on $\cm$ we get:
$$|\tau_{\Delta}(Tg(\D))|=|\widehat\tau(\Psi(Tg(\D))|=
|\widehat\tau(\Psi(T)g(\D))|\leq
\Vert \Psi(T)\Vert \widehat\tau(|g(\D)|)\leq
\Vert T\Vert \widehat\tau((|g(\D)|)=
\Vert T\Vert \tau_{\Delta}(|g(\D)|).$$
That is the left hand side is norm-continuous in $T$ and so we have 
the formula:
$$\tau_\Delta(\pi(f)g(\D))=\tau_\Delta(g(\D))\tau(f)=\sum_{k\in{\Z}}g(k)\tau(f)$$ for 
all $f\in F.$
\end{proof}

\begin{rems*}
The inequality above clearly holds in more generality. That is, if $T\in\cn$ 
and $B\in \LL^1(\cm,\tau_\Delta)$ then:
\begin{eqnarray}\label{cauchy}
|\tau_\Delta(TB)|\leq \Vert T\Vert_\infty \tau_\Delta(|B|)=
\Vert T\Vert_\infty \Vert B\Vert_1. 
\end{eqnarray}
\end{rems*}

\begin{prop}\label{dixycomp} We have 
$(1+\D^2)^{-1/2}\in\LL^{(1,\infty)}(\cM,\tau_\Delta)$. That is,
$\tau_\Delta((1+\D^2)^{-s/2})<\infty$ for all $s>1.$ Moreover, for
all $f\in F$
$$\lim_{s\to 1^+}(s-1)\tau_{\Delta}(\pi(f)(1+\D^2)^{-s/2})=2\tau(f)$$
so that $\pi(f)(1+\D^2)^{-1/2}$ is a measurable operator in the sense of 
\cite{C}.
\end{prop}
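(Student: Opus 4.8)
The plan is to reduce everything to the explicit spectral decomposition $\D=\sum_{k\in\Z}k\Phi_k$ together with Lemma~\ref{tracesplit}. First I would note that, since each $\Phi_k\in\cM$ (Lemma~\ref{commutes}) and the coefficients tend to $0$, for every $s>0$ the operator $(1+\D^2)^{-s/2}=\sum_{k\in\Z}(1+k^2)^{-s/2}\Phi_k$ lies in $\cM$, and (exactly as in the first line of the proof of Lemma~\ref{tracesplit}, using $\widehat\tau(\Phi_k)=1$) one has $\tau_\Delta((1+\D^2)^{-s/2})=\sum_{k\in\Z}(1+k^2)^{-s/2}$. Comparison with the Riemann zeta function shows this series is finite precisely for $s>1$, which gives the displayed trace-finiteness.

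For the sharper statement $(1+\D^2)^{-1/2}\in\LL^{(1,\infty)}(\cM,\tau_\Delta)$ --- which does \emph{not} follow formally from $s$-summability for all $s>1$ --- I would compute the generalized singular values directly in $(\cM,\widehat\tau)$. The spectrum of $(1+\D^2)^{-1/2}$ is $\{(1+m^2)^{-1/2}:m\geq 0\}$, the value $(1+m^2)^{-1/2}$ occurring with $\widehat\tau$-multiplicity $2$ for $m\geq 1$ (from $\Phi_m$ and $\Phi_{-m}$) and multiplicity $1$ for $m=0$. Listing these in decreasing order gives $\mu_t((1+\D^2)^{-1/2})=(1+m^2)^{-1/2}$ for $t\in[2m-1,2m+1)$, $m\geq 1$, and $\mu_t=1$ for $t\in[0,1)$; hence $\int_0^t\mu_r((1+\D^2)^{-1/2})\,dr=2\log t+O(1)$ and $F_{(1+\D^2)^{-1/2}}(t)\to 2$. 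In particular the supremum defining the $\LL^{(1,\infty)}$-norm is finite.

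Next, for $f\in F$, since $(1+\D^2)^{-s/2}$ is $\tau_\Delta$-trace-class in $\cM$ when $s>1$, Lemma~\ref{tracesplit} gives $\tau_\Delta(\pi(f)(1+\D^2)^{-s/2})=\tau(f)\sum_{k\in\Z}(1+k^2)^{-s/2}$, so it remains only to show $(s-1)\sum_{k\in\Z}(1+k^2)^{-s/2}\to 2$ as $s\to 1^+$. I would split off the $k=0$ term (which contributes $0$ after multiplication by $s-1$) and write $\sum_{m\geq 1}(1+m^2)^{-s/2}=\zeta(s)+g(s)$ with $g(s)=\sum_{m\geq 1}m^{-s}\big((1+m^{-2})^{-s/2}-1\big)$; the summands here are $O(m^{-s-2})$ uniformly for $s$ near $1$, so $g$ is bounded near $s=1$, and since $(s-1)\zeta(s)\to 1$ the limit equals $2\tau(f)$.

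Finally, $\pi(f)(1+\D^2)^{-1/2}\in\LL^{(1,\infty)}(\cM,\tau_\Delta)$ since $\pi(F)\subset\cM$ by Lemma~\ref{commutes}. Writing $f$ as a linear combination of positive elements of $F$: for $f\geq 0$ the operator $\pi(f)(1+\D^2)^{-1/2}$ is positive and its zeta residue has just been shown to exist and equal $2\tau(f)$, so Karamata's Tauberian theorem (in the form used in \cite{C}, cf.\ \cite{CPS2}) converts this into convergence of the Ces\`aro means $\frac{1}{\log N}\sum_{n<N}\mu_n$, whence all Dixmier traces agree; additivity of Dixmier traces then disposes of general $f$. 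I expect the main obstacle to be exactly this last step: ``residue exists'' does not by itself force measurability, so one must either verify Karamata's hypotheses carefully in this semifinite block setting or, alternatively, compute $F_{\pi(f)(1+\D^2)^{-1/2}}$ directly, which requires controlling the spectral distribution of $\Phi_k\pi(f)\Phi_k$ within each finite-$\widehat\tau$-trace block $\Phi_k\cn\Phi_k$.
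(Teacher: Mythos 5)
Your proof is correct and follows essentially the same route as the paper's: compute $\tau_\Delta((1+\D^2)^{-s/2})=\sum_{k\in\Z}(1+k^2)^{-s/2}$ from the spectral decomposition and $\widehat\tau(\Phi_k)=1$, invoke Lemma~\ref{tracesplit} to pull out $\tau(f)$, and identify the residue at $s=1$ as $2$. The one place you go beyond the paper is worth noting: the paper's proof only establishes $\tau_\Delta((1+\D^2)^{-s/2})<\infty$ for $s>1$ (which is how the Proposition itself glosses the $\LL^{(1,\infty)}$ claim), whereas you correctly observe that this does not formally imply membership in $\LL^{(1,\infty)}(\cM,\tau_\Delta)$ and supply the missing singular-value computation $\int_0^t\mu_r\,dr=2\log t+O(1)$; similarly, your appeal to a Karamata/zeta-residue argument for measurability makes explicit a step the paper leaves to the reader (via \cite{CPS2}). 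Both additions are sound and strengthen the argument rather than diverging from it.
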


\begin{proof}
Let $s>1$. Then
$\tau_\Delta((1+\D^2)^{-s/2})=\widehat\tau(\sum_{k\in{\Z}}(1+k^2)^{-s/2}\Phi_k)
=\sum_{k\in{\Z}}(1+k^2)^{-s/2}.$ Hence, $(1+\D^2)^{-s/2}$ is 
$\tau_\Delta$-trace-class in $\cm$ for all $Re(s)>1$ and
$$\lim_{s\to 1^+}(s-1)\tau_\Delta((1+\D^2)^{-s/2})=2.$$
By Lemma \ref{tracesplit} we have the equality:
$$\tau_\Delta(\pi(f)(1+\D^2)^{-s/2})=\sum_{k\in{\Z}}(1+k^2)^{-s/2}\tau(f)$$ for 
all $f\in F.$ Hence, 
$$ \lim_{s\to 1^+}(s-1)\tau_\Delta(\pi(f)(1+\D^2)^{-s/2})=2\tau(f)$$
and $\pi(f)(1+\D^2)^{-1/2}$ is measurable, for all $f\in F$.
\end{proof}

We wish to extend our conclusions about $\tau_\Delta$ and
$\lim_{s\to 1^+}(s-1)\tau_\Delta(T(1+\D^2)^{-s/2})$ to the whole von Neumann
algebra $\cn$. Unfortunately, these limits do not exist for general
$T\in\cn$ and we are forced to consider generalised limits as in the Dixmier 
trace theory.
\begin{defn}
 Let $\tilde\omega$
be a state on $\mathcal L^\infty(\R_+)$ which satisfies the 
condition that if $g\in\mathcal L^\infty(\R_+)$ is real-valued then 
$$\liminf_{t\to\infty} g(t) \leq \tilde\omega(g)
\leq \limsup_{t\to\infty} g(t).$$
Clearly any such state is identically $0$ on $C_0(\R_+)$ and also on
any function which is essentially compactly supported. Moreover, if $g$
has a limit at $\infty$ then $\tilde\omega(g)=\lim_{t\to\infty} g(t).$
We define
$$\tilde\omega\!-\!\lim_{t\to\infty} g(t):= \tilde\omega(g).$$
\end{defn}

The existence of such states (with even more properties) 
can be found in \cite[Corollary 1.6]{CPS2}. In order to evaluate 
such states $\tilde\omega$ on functions $g$ of the form 
$s\mapsto (s-1)\tau_{\Delta}(T(1+\D^2)^{-s/2})$ for $s>1$ 
we need to do a translation: let $s=1+1/r$ then letting $s\to 1^+$ is the 
same as letting $r\to\infty.$ And we consider
$$(s-1)\tau_{\Delta}(T(1+\D^2)^{-s/2})=\frac{1}{r}
\tau_{\Delta}\left(T\left((1+\D^2)^{-1/2}\right)^{1+1/r}\right).$$
Of course, the limit of the left hand side of this equation exists
as $s\to 1^+$ if and only if the limit of the right hand side exists as 
$r\to\infty$
and in this case they are equal.

 {\bf Abuse of notation:} 
$$\tilde{\omega}\!-\!\lim_{r\to\infty}\frac{1}{r}
\tau_{\Delta}\left(T\left((1+\D^2)^{-1/2}\right)^{1+1/r}\right)\;\;\;
{\bf becomes}\;\;\;\tilde{\omega}\!-\!\lim_{s\to 1^+}(s-1)
\tau_{\Delta}(T(1+\D^2)^{-s/2}).$$
\begin{rems*}
Since $\tau_{\Delta}(T(1+\D^2)^{-s/2})=\widehat\tau(\Psi(T)(1+\D^2)^{-s/2})$
these generalised traces are taking place completely inside $\cm$ with respect
to the trace $\widehat\tau.$ That is,
we are in the now well-understood semifinite situation.
\end{rems*}

\begin{prop}\label{funnystate} Let $\tilde\omega$
be a state on $\mathcal L^\infty(\R_+)$ which satisfies the 
condition above. The functional $\widehat\tau_\omega$ on $\cn$  defined by
$$ \widehat\tau_\omega(T)=
\frac{1}{2}\tilde{\omega}\!-\!\lim_{s\to 1^+}(s-1)
\tau_{\Delta}(T(1+\D^2)^{-s/2})$$
is a state. 
%Moreover, this state is KMS with respect to the one
%parameter group 
%$$\R\ni t\mapsto\left(T\mapsto \Delta^{it}T\Delta^{-it}\right)$$
%where we use the weakly dense subalgebra
%$E:=End^{00}_F(X_c)\subset\cn$ as the dense subalgebra in the KMS
%condition. 
For $T=\pi(a)\in \pi(O_n)\subset \cn$ the following (ordinary) 
limit exists and
$$\widehat\tau_\omega(\pi(a))=\frac{1}{2}\lim_{s\to 1^+}(s-1)
\tau_\Delta(\pi(a)(1+\D^2)^{-s/2})=\tau\circ\Phi(a),$$
the original KMS state $\psi=\tau\circ\Phi$ on $O_n.$
\end{prop}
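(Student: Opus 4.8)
The plan is to treat the two assertions separately, since the first (that $\widehat\tau_\omega$ is a state) is a soft, purely functional-analytic statement, while the second (the evaluation on $\pi(O_n)$) is where the real content lies.

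For the claim that $\widehat\tau_\omega$ is a state, I would proceed as follows. Positivity: for $T\geq 0$ in $\cn$, the function $s\mapsto (s-1)\tau_\Delta(T(1+\D^2)^{-s/2})$ is nonnegative for $s>1$ because, via Lemma~\ref{commutes}, $\tau_\Delta(T(1+\D^2)^{-s/2})=\widehat\tau(\Psi(T)(1+\D^2)^{-s/2})$ and $\Psi(T)\geq 0$, $\widehat\tau$ a trace, $(1+\D^2)^{-s/2}>0$ commuting with $\Psi(T)$; a state $\tilde\omega$ satisfying the $\liminf/\limsup$ sandwich is positive, so $\widehat\tau_\omega(T)\geq 0$. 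Boundedness/normalisation: by the Cauchy-type inequality~(\ref{cauchy}) applied inside $\cm$, $|(s-1)\tau_\Delta(T(1+\D^2)^{-s/2})|\leq\Vert T\Vert_\infty\,(s-1)\tau_\Delta((1+\D^2)^{-s/2})$, and by Proposition~\ref{dixycomp} the right side tends to $2\Vert T\Vert_\infty$ as $s\to 1^+$; hence $\tilde\omega$ returns a value bounded by $2\Vert T\Vert_\infty$, and after the factor $\tfrac12$ we get $|\widehat\tau_\omega(T)|\leq\Vert T\Vert_\infty$. Linearity in $T$ is inherited from linearity of $\tau_\Delta$, of the translation $s=1+1/r$, and of $\tilde\omega$. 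Finally $\widehat\tau_\omega(1)=1$: take $T=1$, use $\tau_\Delta((1+\D^2)^{-s/2})=\sum_k(1+k^2)^{-s/2}$ from Proposition~\ref{dixycomp}, which has the ordinary limit $2$ after multiplying by $(s-1)$, so $\tilde\omega$ of it is $2$ and $\widehat\tau_\omega(1)=1$.

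For the evaluation on $\pi(O_n)$, the key observation is that it suffices to prove the ordinary limit formula on the dense $*$-subalgebra $O_{nc}=\operatorname{span}\{S_\mu S_\nu^*\}$ and then pass to the norm closure, since by the inequality~(\ref{cauchy}) both sides ($a\mapsto(s-1)\tau_\Delta(\pi(a)(1+\D^2)^{-s/2})$ uniformly in $s$ near $1$, and $a\mapsto\tau\circ\Phi(a)$) are norm-continuous in $a$ — this is exactly the continuity argument already used in the proof of Lemma~\ref{tracesplit}. So fix a generator $S_\mu S_\nu^*$. If $|\mu|=|\nu|$ then $S_\mu S_\nu^*\in F$ and the result is immediate from Proposition~\ref{dixycomp}: $\lim_{s\to1^+}(s-1)\tau_\Delta(\pi(S_\mu S_\nu^*)(1+\D^2)^{-s/2})=2\tau(S_\mu S_\nu^*)=2\,\psi(S_\mu S_\nu^*)$, and the factor $\tfrac12$ gives the claim. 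The substantive case is $|\mu|\neq|\nu|$, where $\psi(S_\mu S_\nu^*)=\tau\circ\Phi(S_\mu S_\nu^*)=0$ by~(\ref{kthproj}), so I must show the limit is $0$. Here I would use $\tau_\Delta(\pi(S_\mu S_\nu^*)(1+\D^2)^{-s/2})=\widehat\tau(\Psi(\pi(S_\mu S_\nu^*))(1+\D^2)^{-s/2})$; since $S_\mu S_\nu^*$ shifts the grading by $|\mu|-|\nu|\neq 0$, we have $\Phi_k\,\pi(S_\mu S_\nu^*)\,\Phi_k=0$ for every $k$, hence $\Psi(\pi(S_\mu S_\nu^*))=0$ and the trace $\tau_\Delta(\pi(S_\mu S_\nu^*)(1+\D^2)^{-s/2})=0$ identically in $s$. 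Thus the ordinary limit exists and equals $0=\psi(S_\mu S_\nu^*)$, as required. Combining the two cases and extending by norm-continuity yields $\widehat\tau_\omega(\pi(a))=\tau\circ\Phi(a)$ for all $a\in O_n$; in particular, because the value is independent of $\tilde\omega$, the generalised limit is an ordinary one on $\pi(O_n)$.

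The only point requiring a little care — and the one I would flag as the main obstacle — is justifying the interchange that lets the generalised limit $\tilde\omega$ collapse to an ordinary limit on $\pi(O_n)$: one needs that for $a\in O_{nc}$ the function $s\mapsto(s-1)\tau_\Delta(\pi(a)(1+\D^2)^{-s/2})$ genuinely converges as $s\to 1^+$ (handled above by the grading argument, reducing to the diagonal case covered by Proposition~\ref{dixycomp}), and then that the norm-continuity estimate~(\ref{cauchy}) is uniform enough in $s$ on a punctured neighbourhood of $1$ to upgrade pointwise convergence on the dense subalgebra to convergence on all of $O_n$ with the limit identified as $\tau\circ\Phi$. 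Since $\tilde\omega$ respects ordinary limits, once the ordinary limit is established on a norm-dense set and the relevant functionals are equicontinuous in $s$, the conclusion for general $a\in\pi(O_n)$ follows, completing the proof.
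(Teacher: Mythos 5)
Your proof is correct and follows essentially the same route as the paper: your generator case analysis ($|\mu|=|\nu|$ versus $|\mu|\neq|\nu|$) is precisely the verification that $\Psi(\pi(a))=\pi(\Phi(a))$, which the paper states directly and then combines, as you do, with Proposition~\ref{dixycomp} and the uniform bound from inequality~(\ref{cauchy}) to pass from $O_{nc}$ to all of $O_n$.
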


\begin{proof} First we observe that $\tau_\Delta(T(1+\D^2)^{-s/2})$ is finite
for $s>1$ for all $T\in\cn$, since we showed 
in the proof of the previous Proposition that:
$$|\tau_\Delta(T(1+\D^2)^{-s/2})| \leq \Vert T\Vert
\tau_\Delta((1+\D^2)^{-s/2}).$$
Therefore, $(s-1)\tau_\Delta(T(1+\D^2)^{-s/2})$ is uniformly bounded
and so the generalised limit exists as $s\to 1^+$. It is easy to see that
this functional is positive on $\cn^+$ and by the previous proposition
$\widehat\tau_\omega(1)=1$, so that $\widehat\tau_\omega$ is a state on $\cn.$

Now, one easily checks by calculating on generators that for 
$\pi(a)\in \pi(O_{nc})$,
$\Psi(\pi(a))=\pi(\Phi(a))\in \pi(F_c)$ and since $\Psi$ is norm continuous we 
have that
$\Psi(\pi(a))=\pi(\Phi(a))\in \pi(F)$ for all $a\in O_n.$ Thus by 
Proposition \ref{dixycomp}, 
for $a\in O_n$ (letting $f=\Phi(a)$) we have
$$\;\tau\circ\Phi(a)=\frac{1}{2}
\lim_{s\to 1^+}(s-1)\tau_\Delta(\pi(\Phi(a))(1+\D^2)^{-s/2})$$
$$=\frac{1}{2}\lim_{s\to 1^+}(s-1)\tau_\Delta(\Psi(\pi(a))(1+\D^2)^{-s/2})=
\;\widehat\tau_\omega(\pi(a)).$$

Of course $\widehat\tau_\omega$ is a true Dixmier-trace since 
for $T\in \cn$ with $T\geq 0$, we have $\Psi(T)\in\cM$,
$\Psi(T)\geq 0$, and
$(1+\D^2)^{-1/2}\in\LL^{(1,\infty)}(\cM,\widehat\tau)$. Thus 
$$\tilde{\omega}\!-\!\lim_{r\to\infty}
\frac{1}{r}\tau_\Delta(T((1+\D^2)^{-1/2})^{1+1/r})=
\tilde\omega\!-\!\lim_{r\to\infty}
\frac{1}{r}\widehat\tau(\Psi(T)((1+\D^2)^{-1/2})^{1+1/r})$$
and the right hand side is a true Dixmier-trace on the semifinite algebra
$\cm$ provided we choose $\tilde\omega$ as in \cite[Theorem 3.1]{CPS2}. 
\end{proof}

We summarise our construction to date.\\
0) We have a $*$-subalgebra $\A=O_{nc}$ of the Cuntz algebra faithfully 
represented in $\cn$ with the latter acting on the  Hilbert space  
$\HH=\LL^2(O_{n},\psi)$,\\
1) there is a faithful normal semifinite weight $\tau_\Delta$
on $\cn$ such that the modular automorphism group of $\tau_\Delta$ is an
inner automorphism group $\tilde\sigma$ of $\cn$ with
$\tilde\sigma|_\A=\s$, \\
2) $\tau_\Delta$ restricts to a faithful semifinite trace $\widehat\tau$ 
on $\cM=\cn^\s$, with a faithful normal projection $\Psi: \cn\to\cm$
satisfying $\tau_\Delta=\widehat\tau\circ\Psi$ on $\cn.$\\
3) With $\D$ the generator of the one parameter group implementing 
$\sigma$ on $\HH$ we have:\\
 $[\D,\pi(a)]$ extends to a bounded operator (in $\cn$) for all $a\in\A$
and for $\lambda$ in the resolvent set of $\D$,
$(\lambda-\D)^{-1} \in \K(\cM,\tau_\Delta)$, where  
%$f\in\A^\s$, and
$\K(\cM,\tau_\Delta)$ is the ideal of compact operators in $\cM$ relative to
$\tau_\Delta$. In particular, $\D$ is affiliated to $\cM$.

%The triple is even if there exists $\gamma=\gamma^*$, $\gamma^2=1$
%such that $\gamma\D+\D\gamma=0$ and $\gamma a=a\gamma$ for all
%$a\in\A$. Otherwise it is odd. 
%\end{defn}

%Observe that this data implies that $\D$ commutes with 
%the modular automorphism group in the strong sense that its
%spectral projections are invariant. 
%We will refer to any triple  $(\A,\HH,\D)$ satisfying 
%this data as a modular spectral triple
%It is important to recognise that as more examples are studied modifications 
%or extensions of the previous definition
%may be required. In particular, this definition does not
%address actions of the reals that do not factor through the
%circle. 
%Also, while this definition generalises semifinite spectral
%triples, there may be a cleaner approach.

\noindent{\bf Terminology/Definition}. 
The triple $(O_{nc},\HH,\D)$ along with $\cn,\ \tau_\Delta$ 
constructed in this section satisfying properties (0) to (3) above
we will refer to as a (unital) {\bf modular spectral triple}. For matrix 
algebras $\A= O_{nc}\otimes M_k$ over $O_{nc}$,
$(O_{nc}\otimes M_k,\HH\otimes M_k ,\D\otimes Id_k)$
is also a modular spectral triple in the obvious fashion. 
In work in progress we have found that this structure arises in other examples and appears to be a  quite general phenomenon.

%Note that in fact we have proved more, namely that 
%$O_{nc}$ is $QC^\infty$ with respect to 
%the partial derivation $\delta=Ad(|\D|)$. 

%
%\begin{lemma}\label{phiclosed} Let $(\A,\HH,\D)$ be a modular spectral
%triple, and let $\s(a)=\Delta^{-1}a\Delta$ for $a\in\A\cap{\rm dom}\s$. Then
%$[\D,F]\subset \cM=\cn^\s$. Moreover, if $m\in\cM$ and  $\Dm$ 
%is trace class, then
%$$\tau_\Delta([\D,a]m)=0\ \ \ \ {\rm for\ all}\ a\in \A.$$
%\end{lemma}

%\begin{proof} Let $a\in F$. Then since $\Delta$ commutes with $\D$,
%$$\s([\D,a])=[\D,\s(a)]=[\D,a],$$
%whence $[\D,a]$ is invariant, and so an element of $\cM$. Now for
%general $a\in\A\cap\mbox{dom}\s$ we have
%$$\phi([\D,a]m)=\phi(\D am-a\D m)=
%\phi(\D am-am\D)$$
%Now $$\phi(\D am-am\D)-\phi(\D am-m\D a)=\phi(-am\D +m\D a)=0$$
%as $m\D$ is trace class. Similarly
%$$\phi(\D am-m\D a)- 
%Hence $\phi([\D,a]m)=0$ for all $a\in\A\cap\mbox{dom}\s$.
%\end{proof}

%\begin{lemma}\label{phiclosed}  Let $(\A,\HH,\D)$ be a modular spectral
%\triple, and suppose that $(1+\D^2)^{-s/2}\in\LL^1(\cM,\phi)$ for all
%\$s>p\geq 1$. Then for all $a\in\A$
%\$$\phi([\D,a](1+\D^2)^{-s/2})=0,\ \ s>p.$$
%\\end{lemma}
We need some technical lemmas
for the discussion in the next Section. A function $f$ from a complex domain 
$\Omega$ into a Banach space $X$ is called {\bf holomorphic} if it is complex
differentiable in norm on $\Omega.$
\begin{lemma}

(1) Let $\mathcal B$ be a $C^*$-algebra and let 
$T\in\mathcal B^+.$ The mapping $z\mapsto T^z$ is holomorphic (in operator norm)
in the half-plane $Re(z)>0.$\\
(2) Let $\mathcal B$ be a von Neumann algebra with faithful normal 
semifinite trace $\phi$ and let $T\in\mathcal B^+$ be in 
$\LL^{(1,\infty)}(\mathcal B,\phi).$ Then, the mapping $z\mapsto T^z$ is 
holomorphic (in trace norm) in the half-plane $Re(z)>1.$\\
(3) Let $\mathcal B$, and $T$ be as in item (2) and let $A\in \mathcal B$
then the mapping $z\mapsto \phi(A T^z)$ is holomorphic for $Re(z)>1.$
\end{lemma}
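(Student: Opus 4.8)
The plan is to prove (1) directly by functional calculus, then obtain (2) from (1) by a factorisation, and (3) from (2) by composing with a bounded linear functional.

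\textbf{Part (1).} Since $T\in\mathcal{B}^+$, the element $T^z$ is defined by continuous functional calculus inside the abelian $C^*$-algebra $C^*(T)$, so by the Gelfand transform it suffices to show that $z\mapsto f_z$ is holomorphic as a $C(\sigma(T))$-valued function, where $f_z(t)=t^z$ for $t>0$ and $f_z(0)=0$; the latter is the continuous extension of $t^z$ to $\sigma(T)$ precisely because $\operatorname{Re}(z)>0$. I would check holomorphy by the weak-holomorphy criterion. First, $z\mapsto f_z$ is locally bounded in norm, since $\Vert f_z\Vert_\infty=\sup_{t\in\sigma(T)}t^{\operatorname{Re}(z)}=\Vert T\Vert^{\operatorname{Re}(z)}$ is bounded on compact subsets of $\{\operatorname{Re}(z)>0\}$. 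Second, for each fixed $t\in\sigma(T)$ the scalar function $z\mapsto f_z(t)$ is holomorphic on $\{\operatorname{Re}(z)>0\}$ (it equals $e^{z\log t}$ for $t>0$ and the constant $0$ for $t=0$). These two facts together give holomorphy of $z\mapsto f_z$ into $C(\sigma(T))$: one may quote Dunford's theorem (a weakly holomorphic Banach-space-valued function is holomorphic, weak holomorphy following here from the pointwise statement plus local boundedness, via integration against measures on $\sigma(T)$), or apply Morera's theorem directly, since $\oint_\triangle f_z\,dz$ evaluated at any $t$ equals $\oint_\triangle f_z(t)\,dz=0$ for every triangle $\triangle\subset\{\operatorname{Re}(z)>0\}$, together with the easy norm-continuity of $z\mapsto f_z$; equivalently there is the norm-convergent Taylor expansion $f_z=\sum_{k\ge0}\frac{(z-z_0)^k}{k!}\,t^{z_0}(\log t)^k$, whose $k$-th coefficient has $C(\sigma(T))$-norm of order $\bigl(k/(e\operatorname{Re}(z_0))\bigr)^k/k!$ and hence positive radius of convergence. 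Composing with the bounded $*$-homomorphism $g\mapsto g(T)$ yields that $z\mapsto T^z\in\mathcal{B}$ is operator-norm holomorphic on $\{\operatorname{Re}(z)>0\}$. The behaviour near the bottom of $\sigma(T)$ when $T$ is not invertible is the one delicate point, handled by $\operatorname{Re}(z)>0$ and the convention $f_z(0)=0$; I expect this to be the main obstacle, the rest being routine.

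\textbf{Part (2).} First record that $T\in\LL^{(1,\infty)}(\mathcal{B},\phi)\cap\mathcal{B}^+$ forces $T^r\in\LL^1(\mathcal{B},\phi)$ for every real $r>1$: writing $\mu_u=\mu_u(T)$ for the generalised singular values, monotonicity of $\mu_u$ gives $u\,\mu_u\le\int_0^u\mu_s\,ds\le C\log(1+u)$, so $\Vert T^r\Vert_1=\int_0^\infty\mu_u^{\,r}\,du\le\Vert T\Vert^r+C^r\int_1^\infty\bigl(\log(1+u)/u\bigr)^r\,du<\infty$. Now fix $z_0$ with $\operatorname{Re}(z_0)>1$, pick $\varepsilon>0$ and a real $r$ with $1<r<\operatorname{Re}(z_0)-\varepsilon$, and let $z$ range over the disc $|z-z_0|<\varepsilon$, so $\operatorname{Re}(z-r)>0$. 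The functional-calculus identity $T^z=T^{\,z-r}\,T^{\,r}$ holds, $T^r\in\LL^1(\mathcal{B},\phi)$ is a fixed element, and by part (1) the map $z\mapsto T^{\,z-r}$ is operator-norm holomorphic on the disc. Since multiplication $\mathcal{B}\times\LL^1(\mathcal{B},\phi)\to\LL^1(\mathcal{B},\phi)$, $(x,B)\mapsto xB$, is bounded bilinear with $\Vert xB\Vert_1\le\Vert x\Vert_\infty\Vert B\Vert_1$, the map $z\mapsto T^{\,z-r}T^{\,r}=T^z$ is $\Vert\cdot\Vert_1$-holomorphic on the disc; since $z_0$ was arbitrary in $\{\operatorname{Re}(z)>1\}$, this proves (2).

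\textbf{Part (3).} The map $B\mapsto\phi(AB)$ is a bounded linear functional on $\LL^1(\mathcal{B},\phi)$, since $|\phi(AB)|\le\Vert A\Vert_\infty\Vert B\Vert_1$ by the trace inequality (cf. (\ref{cauchy})). Composing this functional with the $\LL^1$-holomorphic map $z\mapsto T^z$ of part (2) shows that $z\mapsto\phi(AT^z)$ is holomorphic on $\{\operatorname{Re}(z)>1\}$, as required.
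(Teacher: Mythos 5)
Your proof is correct, and its overall architecture coincides with the paper's: establish operator-norm holomorphy of $z\mapsto T^z$ on $\{\operatorname{Re}(z)>0\}$ by working uniformly over $\sigma(T)$, upgrade to trace-norm holomorphy on $\{\operatorname{Re}(z)>1\}$ by splitting off a trace-class power $T^r$ (the paper takes $r=1+\epsilon$) and using $\Vert xB\Vert_1\le\Vert x\Vert_\infty\Vert B\Vert_1$, then compose with the bounded functional $B\mapsto\phi(AB)$ for (3). The one place you genuinely diverge is part (1): the paper runs an explicit quantitative argument, bounding the second-order Taylor remainder of $t\mapsto t^z$ via the Cauchy integral formula uniformly in $t\in\sigma(T)$, which yields the estimate $\Vert(z-z_0)^{-1}(T^z-T^{z_0})-T^{z_0}\mathrm{Log}(T)\Vert_\infty\le\frac{2}{R^2}|z-z_0|$ and hence the explicit derivative $T^z\mathrm{Log}(T)$; you instead invoke soft Banach-valued complex analysis (local boundedness plus pointwise holomorphy, via Dunford/Morera or the power series with coefficients $t^{z_0}(\log t)^k/k!$), which proves holomorphy without exhibiting the derivative. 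The paper's route buys the concrete derivative formula and a rate; yours is shorter and more portable. You also supply a proof that $T\in\LL^{(1,\infty)}$ implies $T^r\in\LL^1$ for $r>1$ (via $u\mu_u(T)\le C\log(1+u)$), a fact the paper asserts without argument, so on that point your write-up is the more complete of the two.
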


\begin{proof}
To see item (1) we assume without loss of generality that $||T||\leq 1.$ We
fix $z_0\in\C$ with $Re(z_0)>0,$ and fix $R>0$ with $R<Re(z_0)$ so that the
circle $C: z=z_0 + Re^{i\theta}\;\;{\rm for}\;\;\theta\in[0,2\pi]$ lies in the
half-plane $Re(z)>0.$ Temporarily we fix $t\neq 0$ in the spectrum of $T$
so that $t\in (0,1].$ Now with $|z-z_0|<(1/2)R$ we apply the complex version of
Taylor's theorem to the function $z\mapsto t^z$ 
(see \cite[Theorem 8, pp125-6]{Ahl}) and get:
$$\frac{t^z-t^{z_0}}{z-z_0} - t^{z_0}Log(t)=f_2(z)(z-z_0)\;\;\;{\rm where}\;\;\;
f_2(z)=\frac{1}{2\pi i}\int_C \frac{t^w dw}{(w-z_0)^2(w-z)}.$$
So with $|z-z_0|<(1/2)R$ we get the estimate:
$$|f_2(z)|\leq\frac{\max_C|t^w|\cdot R}{R^2\cdot(1/2)R}
\leq\frac{2t^{(Re(z_0)+R)}}{R^2}\leq\frac{2}{R^2}.$$
Therefore,
$$\left|\frac{t^z-t^{z_0}}{z-z_0} - t^{z_0}Log(t)\right|
\leq\frac{2}{R^2}|z-z_0|.$$
Since this is true for all nonzero $t$ in the spectrum of $T$ we have:
$$\left\Vert\frac{1}{z-z_0}(T^z-T^{z_0})-T^{z_0}Log(T)\right\Vert_\infty
\leq\frac{2}{R^2}|z-z_0|.$$
That is $d/dz(T^z)=T^z Log(T)$ for $Re(z)>0$ with the limit existing in operator
norm.

To see item (2) we fix $z_0$ with $Re(z_0)>1$ and then fix $\epsilon$ 
sufficiently small so that $Re(z_0 -(1+\epsilon))=Re(z_0) -(1+\epsilon)>0.$
Then $T^{(1+\epsilon)}$ is trace-class, and this factor converts the operator
norm limits below into trace norm limits:
\bean
&&||\cdot||_1\lim_{z\to z_0}\frac{1}{z-z_0}(T^z-T^{z_0})=
||\cdot||_1\lim_{z\to z_0}T^{(1+\epsilon)}
\frac{1}{z-z_0}(T^{z-(1+\epsilon)}-T^{z_0-(1+\epsilon)})\\
&=&T^{(1+\epsilon)}\left(||\cdot||_\infty\lim_{z\to z_0}
\frac{1}{z-z_0}(T^{z-(1+\epsilon)}-T^{z_0-(1+\epsilon)})\right)\\
&=&T^{(1+\epsilon)}\left(||\cdot||_\infty\lim_{z\to z_0}
\frac{1}{(z-(1+\epsilon))-(z_0-(1+\epsilon))}
(T^{z-(1+\epsilon)}-T^{z_0-(1+\epsilon)})\right)\\
&=&T^{(1+\epsilon)}
(T^{z_0-(1+\epsilon)}Log(T)=T^{z_0} Log(T).
\eean
Item (3) follows from item (2) and inequality \eqref{cauchy}:
$|\phi(AB)|\leq ||A||_\infty||B||_1$ if $B$ is $\phi$-trace-class.
\end{proof}
\begin{lemma}\label{phiclosed}
In these modular spectral triples $(\A,\HH,\D)$ for matrices over the
 Cuntz algebras  we have
$(1+\D^2)^{-s/2}\in\LL^1(\cM,\tau_\Delta)$ for all
$s> 1$ and for $x\in \cn,$ 
$\tau_\Delta(x(1+\D^2)^{-r/2})$ is holomorphic for 
$Re(r)>1$ and we have for $a\in O_{nc}$
$\tau_\Delta([\D,\pi(a)](1+\D^2)^{-r/2})=0,$ for $Re(r)>1.$
\end{lemma}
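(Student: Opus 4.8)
The plan is to handle the three assertions in turn, pushing each down to the trace $\widehat\tau$ on $\cM$, where the analysis is the standard semifinite one. For the first assertion: $(1+\D^2)^{-s/2}$ is a function of $\D$, hence commutes with every $\Phi_k$ and lies in $\cM$; Proposition~\ref{dixycomp} already computes $\tau_\Delta((1+\D^2)^{-s/2})=\sum_{k\in\Z}(1+k^2)^{-s/2}<\infty$ for $s>1$, and since $\tau_\Delta|_{\cM}=\widehat\tau$ (Lemma~\ref{commutes}) this says exactly $(1+\D^2)^{-s/2}\in\LL^1(\cM,\tau_\Delta)$. For matrix algebras $\A=O_{nc}\otimes M_k$ one replaces $\D,\cM,\widehat\tau$ by $\D\otimes I_k,\cM\otimes M_k,\widehat\tau\otimes\mathrm{tr}_k$, and nothing changes.

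For the holomorphy of $r\mapsto\tau_\Delta(x(1+\D^2)^{-r/2})$ on the half-plane $Re(r)>1$ with $x\in\cn$: set $T=(1+\D^2)^{-1/2}\in\cM^+$, which by Proposition~\ref{dixycomp} lies in $\LL^{(1,\infty)}(\cM,\widehat\tau)$, and note $(1+\D^2)^{-r/2}=T^r$ with $|T^r|=T^{Re(r)}$, so that $T^r$ is $\widehat\tau$-trace-class for $Re(r)>1$. The last clause of Lemma~\ref{commutes} then gives $\tau_\Delta(xT^r)=\widehat\tau(\Psi(x)T^r)$ with $\Psi(x)\in\cM$, and part~(3) of the preceding Lemma, applied with $\mathcal B=\cM$, $\phi=\widehat\tau$, $A=\Psi(x)$ and this $T$, shows $z\mapsto\widehat\tau(\Psi(x)T^z)$ is holomorphic for $Re(z)>1$; hence so is $r\mapsto\tau_\Delta(x(1+\D^2)^{-r/2})$, and the matrix case is again identical.

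For the vanishing $\tau_\Delta([\D,\pi(a)](1+\D^2)^{-r/2})=0$ with $a\in O_{nc}$ --- the only assertion with any content --- the key point is that $\D$ is the degree operator. From $\D(S_\alpha S_\beta^*)=(|\alpha|-|\beta|)S_\alpha S_\beta^*$ and the fact (immediate from the multiplication rule recalled in the proof of Lemma~\ref{tomita}) that a nonzero product $S_\mu S_\nu^*S_\alpha S_\beta^*$ is a single monomial of length difference $(|\mu|-|\nu|)+(|\alpha|-|\beta|)$, a short computation on the dense subspace $O_{nc}\subset\HH$ gives $[\D,\pi(S_\mu S_\nu^*)]=(|\mu|-|\nu|)\pi(S_\mu S_\nu^*)$; both sides being bounded, this holds on all of $\HH$. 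By linearity, for $a=\sum c_{\mu\nu}S_\mu S_\nu^*\in O_{nc}$ one gets $[\D,\pi(a)]=\pi(b)$ with $b=\sum c_{\mu\nu}(|\mu|-|\nu|)S_\mu S_\nu^*\in O_{nc}$, and $\Phi(b)=0$, since the terms surviving $\Phi$ are those with $|\mu|=|\nu|$, which carry the coefficient $(|\mu|-|\nu|)=0$.

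To conclude, using $\Psi\circ\pi=\pi\circ\Phi$ on $O_{nc}$ (established in the proof of Proposition~\ref{funnystate}) gives $\Psi([\D,\pi(a)])=\pi(\Phi(b))=0$, whence the last clause of Lemma~\ref{commutes} --- applicable since $(1+\D^2)^{-r/2}\in\cM$ is $\widehat\tau$-trace-class for $Re(r)>1$ --- yields
$$\tau_\Delta\bigl([\D,\pi(a)](1+\D^2)^{-r/2}\bigr)=\widehat\tau\bigl(\Psi([\D,\pi(a)])(1+\D^2)^{-r/2}\bigr)=0,$$
with the matrix case following entry by entry. I do not expect a genuine obstacle: the whole argument is bookkeeping, and the one point that needs a line of care is that $\Psi$ is $\cM$-bimodular, so that $(1+\D^2)^{-r/2}$ may be pulled out of $\Psi$, which is exactly the cited clause of Lemma~\ref{commutes}.
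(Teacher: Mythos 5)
Your proposal is correct and follows essentially the same route as the paper: reduce everything to $\widehat\tau$ on $\cM$ via $\Psi$, apply part (3) of the preceding holomorphy lemma for the second claim, and for the third claim compute $[\D,\pi(S_\mu S_\nu^*)]=(|\mu|-|\nu|)\pi(S_\mu S_\nu^*)$ and observe that $\Psi$ kills every term (the surviving $|\mu|=|\nu|$ monomials carry coefficient zero). The only differences are cosmetic — you supply slightly more detail on $|T^r|=T^{Re(r)}$ and on why $\Phi(b)=0$ — so there is nothing to change.
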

\begin{proof} Since the eigenvalues for $\D$ are precisely the set of
integers, and the projection $\Phi_k$ on the eigenspace with eigenvalue $k$
satisfies $\tau_\Delta(\Phi_k)=1,$ it is clear that 
$(1+\D^2)^{-s/2}\in\LL^1(\cM,\tau_\Delta).$
Now, $\tau_\Delta(x(1+\D^2)^{-r/2})=
\widehat\tau(\Psi(x)(1+\D^2)^{-r/2})$ is holomorphic for $Re(r)>1$
by item (3) of the previous lemma.

To see the last statement, we observe that 
$\tau_\Delta([\D,\pi(a)](1+\D^2)^{-r/2})=
\tau_\Delta(\Psi([\D,\pi(a)])(1+\D^2)^{-r/2}),$ so it suffices to see that
$\Psi([\D,\pi(a)])=0$ for $a\in \A=O_{nc}.$   To this end, 
let $a=S_\alpha S_\beta^*$
be one of the linear generators of $O_{nc}.$  Then by calculating the action
of the operator $\D\pi(S_\alpha S_\beta^*)$ on the linear generators
$S_\mu S_\nu^*$ of the Hilbert space, $\HH$, we obtain:
$$\D\pi(S_\alpha S_\beta^*)=(|\alpha|-|\beta|)\pi(S_\alpha S_\beta^*)
+\pi(S_\alpha S_\beta^*)\D\;\;\;{\rm that\;\; is}\;\;\;
[\D,\pi(S_\alpha S_\beta^*)]=(|\alpha|-|\beta|)\pi(S_\alpha S_\beta^*).$$
More generally,
$$[\D,\pi(\sum_{i=1}^m c_i S_{\alpha_i} S_{\beta_i}^*)]=
\sum_{i=1}^m c_i(|\alpha_i|-|\beta_i|)\pi(S_{\alpha_i} S_{\beta_i}^*).$$
If we apply $\Psi$ to this equation, we see that 
$\Psi(\pi(S_{\alpha_i} S_{\beta_i}^*))=\pi(\Phi(S_{\alpha_i} S_{\beta_i}^*))=0$
whenever $(|\alpha|-|\beta|)\neq 0,$ and so the whole sum is $0$. We also
observe that $[\D,\pi(a)]\in \pi(O_{nc})$ for all $a\in O_{nc}.$ This is not
too surprising since $\D$ is the generator of the action $\gamma$ of $\T$ on
$O_n.$
\end{proof}

In the remainder of this paper we will shed some light on the
cohomological significance of these modular spectral triples. Just as
ordinary $\B(\HH)$ spectral triples represent $K$-homology classes,
\cite{C,CPRS1}, 
and semifinite spectral triples represent $KK$-classes, \cite{NR},
modular spectral triples provide analytic representatives
of some $K$-theoretic type data which we now describe.

\section{Modular $K_1$}

In this Section we introduce elements of $\mathcal A$ that will have a
well defined pairing with our Dixmier functional
 $\widehat\tau_{\omega}$.
Following \cite{HR} we say that a unitary (invertible, projection,...) 
{\bf in} $M_n(A)$ for some $n$ {\bf is}
a unitary (invertible, projection,...) {\bf over} $A$.

\begin{defn} Let $\A$ be a unital $*$-algebra and $\s:\A\to \A$ an algebra
automorphism such that 
$ \s(a)^*=\s^{-1}(a^*).$
We say that $\s$ is a {\bf regular automorphism}, \cite{KMT}.
\end{defn}

{\bf Remark}. The automorphism $\s(a):=\Delta^{-1}(a)$ of a modular
spectral triple is regular. This follows from AXIOM IV of Lemma \ref{tomita}:
$$(\s(a))^*=(\Delta^{-1}(a))^*=\S(\Delta^{-1}(a))=\Delta(\S(a))=\Delta(a^*)
=\s^{-1}(a^*).$$

\begin{defn} Let $u$ be a unitary over the $*$-algebra $\A$, and 
$\s:\A\to \A$ a regular
automorphism with fixed point algebra $F=\A^\s$. We say that $u$
satisfies the {\bf modular condition} with respect to $\s$ 
if both the operators
$ u\s(u^*)$ and $u^*\s(u)$
are matrices over the algebra $F$. We denote by
$U_\s$ the set of {\bf modular unitaries}. Of course, any unitary over $F$
is a modular unitary. 
\end{defn}

Here we are  thinking of the case $\s(a)=\Delta^{-1}(a)$, 
where $\Delta$ is the modular operator for some weight on
$A$. Again, to avoid confusion, we remind the reader that as operators we have:
$$\pi(\s(a))=\pi(\Delta^{-1}(a))=\Delta^{-1}\pi(a)\Delta.$$
Hence the terminology modular unitaries.
For unitaries in matrix algebras over $\A$ we use the
regular automorphism 
$\s\otimes Id_n$ to state the modular condition, where $Id_n$
is the identity automorphism of $M_n(\C)$.

{\bf Example}. For $S_\mu\in O_{nc}$ we write $P_\mu=S_\mu
S_\mu^*$. Then 
for each
$\mu,\nu$ we have a unitary
$$ u_{\mu,\nu}=\bma 1-P_\mu & S_\mu S_\nu^*\\ S_\nu S_\mu^* &
1-P_\nu\ema.$$ It is simple to check that this a self-adjoint
unitary satisfying the modular condition.

\begin{defn} Let $u_t$ be a continuous path of modular unitaries  in the 
$*$-subalgebra $\A$ such
that $u_t\s(u_t^*)$ and $u^*_t\s(u_t)$ are also {\bf continuous} paths in
$F$ (this is not guaranteed since $\s$ is not generally bounded). 
Then we say that $u_t$ is a {\bf modular homotopy}, and say that $u_0$ and
$u_1$ are {\bf modular homotopic}. If $u$ and $v$ are modular unitaries, we say 
that $u$ is {\bf equivalent} to $v$ if there exist $k,m\geq 0$ so that 
$u\oplus 1_k$ is modular homotopic to $v\oplus 1_m.$
\end{defn}

\begin{lemma} The relation defined above is an equivalence relation.
Moreover, if $u$ is a modular unitary and $k\geq 0$ then
$1_k\oplus u$ is modular homotopic to $u\oplus 1_k.$
The binary operation on equivalence classes in $U_\s$, given by
$[u]+[v]:=[u\oplus v]$ is well-defined and abelian.
\end{lemma}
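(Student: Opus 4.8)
The plan is to verify the three assertions in turn, each by exhibiting an explicit modular homotopy. First, for the equivalence relation: reflexivity is the constant path; symmetry follows because if $u_t$ is a modular homotopy from $u$ to $v$ then $u_{1-t}$ is one from $v$ to $u$ (the paths $u_t\s(u_t^*)$ and $u_t^*\s(u_t)$ reverse to paths of the same type); transitivity requires concatenating paths, and to make the sizes match one uses that $u\oplus 1_k$ and $v\oplus 1_m$ can be stabilised further by adding identity blocks, noting that $1_j$ is itself a modular unitary (it lies over $F$) and that $\oplus$ of two modular homotopies is a modular homotopy. The only point needing a word is that concatenation of two continuous paths in $F$ through the endpoint value is again continuous, which is immediate.

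\textbf{Rotation homotopy.} The substantive ingredient is the claim that $1_k\oplus u$ is modular homotopic to $u\oplus 1_k$. The standard trick is the rotation matrix: for $\theta\in[0,\pi/2]$ put
$$R_\theta=\bma \cos\theta\, 1_k & -\sin\theta\, 1_k\\ \sin\theta\, 1_k & \cos\theta\, 1_k\ema,$$
which is a unitary \emph{over} $F$ (its entries are scalar multiples of $1_k$, hence fixed by $\s$), and set
$$w_\theta=R_\theta\,(u\oplus 1_k)\,R_\theta^*.$$
Then $w_0=u\oplus 1_k$ and $w_{\pi/2}=1_k\oplus u$, and $\theta\mapsto w_\theta$ is norm-continuous. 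The key computation is that $w_\theta$ remains a modular unitary with the required continuity: since $\s$ (equivalently $\Delta^{-1}$) fixes the scalar entries of $R_\theta$, one has $\s(w_\theta)=R_\theta(\s(u)\oplus 1_k)R_\theta^*$ after inflating $\s$ by $Id_{2k}$, so
$$w_\theta\,\s(w_\theta^*)=R_\theta\,(u\oplus 1_k)(\s(u^*)\oplus 1_k)\,R_\theta^*=R_\theta\,\big((u\s(u^*))\oplus 1_k\big)\,R_\theta^*,$$
which is a matrix over $F$ because $u\s(u^*)$ is and $R_\theta$ is over $F$; moreover it depends continuously on $\theta$ because $u\s(u^*)\oplus 1_k$ is a \emph{fixed} element of matrices over $F$ being conjugated by a continuous path of unitaries over $F$. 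The same argument applies to $w_\theta^*\s(w_\theta)$. This is the step I expect to be the main (though modest) obstacle: one must be careful that $\s$ is unbounded, so continuity of the auxiliary $F$-valued paths cannot be taken for granted — it is forced here precisely because the conjugating unitaries lie in the fixed-point algebra, so $\s$ never actually acts on anything that moves.

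\textbf{Well-definedness of $+$.} Finally, for the operation $[u]+[v]:=[u\oplus v]$: one checks it is independent of representatives by showing that if $u_t$ and $v_t$ are modular homotopies then $u_t\oplus v_t$ is a modular homotopy (the associated $F$-valued paths are $(u_t\s(u_t^*))\oplus(v_t\s(v_t^*))$, continuous as a direct sum of continuous paths), and one must also absorb the stabilisation blocks: replacing $u$ by $u\oplus 1_k$ changes $u\oplus v$ to $u\oplus 1_k\oplus v$, which by the rotation homotopy of the previous paragraph (applied to move the $1_k$ block past $v$) is modular homotopic to $u\oplus v\oplus 1_k$, hence equivalent to $u\oplus v$. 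Commutativity $[u\oplus v]=[v\oplus u]$ is again the rotation homotopy, now with $R_\theta$ built from blocks of the appropriate two sizes (using $1_j$ over $F$ to pad to equal size if $u,v$ have different sizes). Associativity, if needed, is immediate from associativity of $\oplus$. Throughout, the recurring mechanism is the same: all homotopies are implemented by conjugation by unitaries over the fixed-point algebra $F$, on which $\s$ acts trivially, so all modular and continuity conditions are preserved automatically.
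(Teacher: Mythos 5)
Your proposal is correct and follows essentially the same route as the paper: all homotopies are implemented by conjugating with continuous paths of scalar unitaries, which are fixed by $\s$, so the modular condition and the continuity of the $F$-valued paths are preserved automatically (the paper uses a path from a cyclic shift to the identity where you use a block rotation for $1_k\oplus u\sim u\oplus 1_k$, and it verifies the $M_2(F)$-membership of $w_t\s(w_t^*)$ by explicit matrix multiplication where you observe conceptually that $w_t\s(w_t^*)=R_t\bigl((u\s(u^*))\oplus(v\s(v^*))\bigr)R_t^*$, but these are cosmetic differences). The well-definedness argument via absorbing the stabilising identity blocks is identical to the paper's.
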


\begin{proof}  It is straightforward to show that this is an
equivalence relation. To see that $1_k\oplus u$ is modular homotopic to
$u\oplus 1_k$ it suffices to do this for $k=1.$ If $u\in M_m(\A)$ is a 
modular unitary then
let $x_0\in M_{m+1}(\C)$ be the (backward) shift matrix whose action
on the standard basis of $\C^{m+1}$ is given by 
$x_0(\overline{e}_k)=\overline{e}_{k-1} (mod)(m+1).$ Then,
$x_0(1\oplus u)x_0^*=(u\oplus 1).$ Let $\{x_t\}$ be a coninuous path of 
scalar unitaries from $x_0$ to $x_1=1_{m+1}.$ Of course each $x_t\in M_{m+1}(F)$
as well. Since $\s(x_t)=x_t,$ one easily checks that 
$\{x_t(1\oplus u)x_t^*\}$ is a modular homotopy from $u\oplus 1$ to $1\oplus u.$

To see that addition is well-defined, we must show that $u\oplus v$ is 
equivalent to $(u\oplus 1_k)\oplus (v\oplus 1_m).$ But this equals 
$u\oplus (1_k\oplus v)\oplus 1_m.$ By the previous argument this is
equivalent to $u\oplus (v\oplus 1_k)\oplus 1_m$ which equals
$(u\oplus v)\oplus 1_{k+m}$ which is equivalent to $(u\oplus v).$

To see that addition of classes is abelian let $u,v$ be modular unitaries.
By adding on copies of the identity, we can assume that $u$ and $v$ are both
the same size matrices. Hence, it suffices to show 
that $u\oplus v$
is modular homotopic to $v\oplus u.$  To this end, let
$$ R_t=\bma \cos(t) & \sin(t)\\ -\sin(t) & \cos(t)\ema$$
for $t\in [0,\pi/2]$. Let $w_t=R_t(u\oplus v)R_t^*$. Then we have
$$ w_t=\bma \cos^2(t)u+\sin^2(t)v&\cos(t)\sin(t)(v-u)\\
\cos(t)\sin(t)(v-u)&\cos^2(t)v+\sin^2(t)u\ema.$$ Observe that at
$t=0$ we have $u\oplus v$ and at $t=\pi/2$ we have $v\oplus u$. We
need to show that $w_t\s( w_t^*)\in M_2(F)$ for all
$t\in[0,\pi/2]$. Write $\hat u$ for $\s( u^*)$ and
similarly for $v$. Then we compute \bean &&w_t\s(
w_t^*)\nno&=&\bma \cos^2(t)u+\sin^2(t)v&\cos(t)\sin(t)(v-u)\\
\cos(t)\sin(t)(v-u)&\cos^2(t)v+\sin^2(t)u\ema\bma \cos^2(t)\hat
u+\sin^2(t)\hat v
&\cos(t)\sin(t)(\hat v-\hat u)\\
\cos(t)\sin(t)(\hat v-\hat u)&\cos^2(t)\hat v+\sin^2(t)\hat
u\ema\nno
%&=& \bma \cos^4(t)u\hat u+\cos^2(t)\sin^2(t)(v\hat
%u+u\hat v)+\sin(t)^4v\hat v+\cos^2(t)\sin^2(t)(v\hat v+u\hat
%u-v\hat u-u\hat v)& \cos^3(t)\sin(t)u(\hat v-\hat
%u)+\cos(t)\sin^3(t)v(\hat v-\hat u)+\cos^3(t)\sin(t)(v-u)\hat
%v+\cos(t)\sin^3(t)(v-u)\hat(u)\\
%\cos^3(t)\sin(t)(v-u)\hat u+\cos(t)\sin^3(t)v(\hat v-\hat
%u)+\cos^3(t)\sin(t)(v-u)\hat v+\cos(t)\sin^3(t)(v-u)\hat u&
%\cos^2(t)\sin^2(t)(v\hat u+u\hat v)+\cos^2(t)\sin^2(t)(v-u)(\hat
%v-\hat u) +\cos^4(t)v\hat v+\sin^4(t)u\hat u\ema\nno
&=& \bma \cos^2(t)u\hat u+\sin^2(t)v\hat v & \cos(t)\sin(t)(v\hat
v-u\hat u)\\ \cos(t)\sin(t)(v\hat v-u\hat u) &\cos^2(t)v\hat
v+\sin^2(t)u\hat u\ema\eean and since both $u\hat u$ and $v\hat v$
lie in $F$, this is in $M_2(F)$. The other half of the modular
condition follows by replacing $u,v$ by $u^*,v^*$.
\end{proof}

We can now also see why the usual proof that the inverse of $u$ is
$u^*$ in $K_1(A)$ is not available to us. This usual proof is as
follows. In the $K_1$ setting one uses: 
$u\oplus v=(u\oplus 1)(1\oplus v)\sim
(1\oplus u)(1\oplus v)=(1\oplus uv)$, so that addition in
$K_1$ arises from multiplication of unitaries, and hence
$[u]+[u^*]=[uu^*]=[1]=0$. However, in the modular setting, while the 
homotopy from $u\oplus 1$ to $1\oplus u$ is a modular homotopy 
in $U_\s$ by the last Lemma, the homotopy from $(u\oplus
1)(1\oplus v)$ to $(1\oplus u)(1\oplus v)$ is not in general. The
multiplication on the right by $(1\oplus v)$ breaks the
modular condition. In particular, the product of two modular unitaries
{\bf need not be a modular unitary}. 

\begin{lemma} If $u\in M_k(F)$ is unitary then $u\oplus u^*\sim
1$.
\end{lemma}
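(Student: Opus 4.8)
The plan is to reduce to the classical $K_1$-theory fact that $u\oplus u^*$ is homotopic to the identity through unitaries, and then observe that in the present setting this homotopy is automatically a \emph{modular} homotopy. The key point is that a unitary over the fixed point algebra $F=\A^\s$ is fixed by $\s\otimes Id$, hence is automatically a modular unitary, and any norm-continuous path of unitaries that stays inside $M_{2k}(F)$ is automatically a modular homotopy: for such a path $w_t$ one has $w_t\s(w_t^*)=w_tw_t^*=1$ and $w_t^*\s(w_t)=1$, which are constant (in particular continuous) paths in $F$, so the unboundedness of $\s$ flagged in the definition of modular homotopy never intervenes.

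First I would write down the standard rotation homotopy, inflating the scalar rotation matrices by $1_k$. For $t\in[0,\pi/2]$ set
$$R_t=\bma \cos(t)\,1_k & -\sin(t)\,1_k\\ \sin(t)\,1_k & \cos(t)\,1_k\ema,\qquad w_t=\bma u & 0\\ 0 & 1_k\ema R_t \bma u^* & 0\\ 0 & 1_k\ema R_t^*.$$
Each $R_t$ is a scalar unitary, so it lies in $M_{2k}(F)$ and is fixed by $\s\otimes Id$; hence every $w_t$ is a unitary in $M_{2k}(F)$ and $t\mapsto w_t$ is norm-continuous. At $t=0$, $R_0=1_{2k}$ and $w_0=\mathrm{diag}(u,1_k)\,\mathrm{diag}(u^*,1_k)=1_{2k}$. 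At $t=\pi/2$, a short computation shows $R_{\pi/2}$ conjugates $\mathrm{diag}(u^*,1_k)$ to $\mathrm{diag}(1_k,u^*)$, so $w_{\pi/2}=\mathrm{diag}(u,1_k)\,\mathrm{diag}(1_k,u^*)=u\oplus u^*$.

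Next I would invoke the first paragraph: since every $w_t$ lies in $M_{2k}(F)$, the path $\{w_t\}$ is a modular homotopy from $1_{2k}$ to $u\oplus u^*$. Writing $1_{2k}=1_1\oplus 1_{2k-1}$ and using the trivial summand $1_0$ on the other side, this is exactly the statement $u\oplus u^*\sim 1$ for the equivalence relation on $U_\s$.

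As for the main point requiring care — and the reason this deserves its own lemma — one must resist the temptation to run the usual $K_1$ argument $[u]+[u^*]=[u\oplus u^*]\sim[(u\oplus 1)(1\oplus u^*)]=[1]$, since, as observed in the discussion just before the lemma, multiplying along a path by $1\oplus u^*$ destroys the modular condition in general. So there is no genuine obstacle here; the entire content is the observation that the conjugation (rotation) homotopy, unlike the multiplication homotopy, respects the modular structure precisely because the conjugating unitaries $R_t$ are scalar and $\s$-invariant, so the whole path remains inside $M_{2k}(F)$ where the modular condition is vacuous.
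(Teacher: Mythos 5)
Your proof is correct and is essentially the paper's own argument: the paper simply asserts the existence of a path from $u\oplus u^*$ to $1$ through unitaries in matrices over $F$ (the standard rotation homotopy you write out) and notes that $w_t\s(w_t^*)=1$ along it, which is exactly your observation that the modular condition is vacuous for paths staying inside $M_{2k}(F)$. Your version merely makes the Whitehead rotation explicit.
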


\begin{proof} There is a path $w_t$ from $u\oplus u^*$ to $1$ through
unitaries in $M_k(F)$ and so $w_t\s(
w_t^*)=1$ for all $t$ and hence we find $u\oplus u^*\sim
1$.
\end{proof}

We now formalise the above discussion. Compare the following
 with \cite[Definition 4.8.1]{HR}

\begin{defn} Let $K_1(\A,\s)$ be the abelian semigroup of equivalence
classes of modular unitaries $u$ over $\A$ under the equivalence relation
$u$ is {\bf equivalent} to $v$ if there exist $k,m\geq 0$ so that $u\oplus 1_k$
is modular homotopic to $v\oplus 1_m.$ The following relations hold in
$K_1(\A,\s)$
\bean 1)&& [1]=0,\nno 2)&&
[u]+[v]=[u\oplus v],\nno 3)&& \mbox{If }u_t,\ t\in[0,1]\ \mbox{is a
continuous paths of unitaries in }M_k(\A)\mbox{ with}\ u_t\s(u_t^*) \mbox{ and}\ 
u_t^*\s(u_t)\nno && \mbox {continuous over}\ F\ \mbox{then}\ [u_0]=[u_1].\eean
\end{defn}

\begin{cor} If $u\in M_k(F)$ then $-[u]=[u^*]$ in $K_1(A,\s)$.
\end{cor}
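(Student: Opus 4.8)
The plan is to deduce this immediately from the preceding Lemma together with relations 1) and 2) in the definition of $K_1(\A,\s)$, so the argument will be a short formal computation rather than anything substantial.

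First I would check that $[u]$ and $[u^*]$ are genuine classes in $K_1(\A,\s)$. Since $u\in M_k(F)$ and $F=\A^\s$, we have $\s(u)=u$, hence $u\s(u^*)=uu^*=1$ and $u^*\s(u)=1$; both are (matrices over) $F$, so $u$, and likewise $u^*$, is a modular unitary. Then by relation 2), $[u]+[u^*]=[u\oplus u^*]$. The previous Lemma supplies a path $w_t$ of unitaries over $F$ from $u\oplus u^*$ to $1$; because each $w_t$ lies over $F$ one has $w_t\s(w_t^*)=w_tw_t^*=1$ and $w_t^*\s(w_t)=1$, so this is a bona fide modular homotopy and $u\oplus u^*$ is equivalent to $1$. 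Using relation 1), $[u]+[u^*]=[u\oplus u^*]=[1]=0$, i.e.\ $[u^*]$ is an additive inverse of $[u]$ in the abelian semigroup $K_1(\A,\s)$; since inverses in an abelian semigroup are unique when they exist, the notation $-[u]=[u^*]$ is unambiguous.

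There is no real obstacle here — all the content sits in the previous Lemma. The only point I would be careful to emphasise is the hypothesis $u\in M_k(F)$: for a general modular unitary this argument is unavailable, because $u\oplus u^*$ need not be modular homotopic to the identity (the natural homotopy is built from products of unitaries, and multiplication breaks the modular condition, as noted in the discussion following the proof that addition on $K_1(\A,\s)$ is abelian). Thus the Corollary should be read as the assertion that classes of unitaries over $F$ admit inverses in $K_1(\A,\s)$, not that $K_1(\A,\s)$ is a group.
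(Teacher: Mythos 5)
Your argument is correct and is exactly the route the paper intends: the Corollary is an immediate consequence of the preceding Lemma ($u\oplus u^*\sim 1$ for $u$ over $F$, via a homotopy through unitaries in $M_k(F)$, which is automatically modular) combined with relations 1) and 2) in the definition of $K_1(\A,\s)$. Your added remarks on uniqueness of inverses and on why the argument fails for general modular unitaries are consistent with the paper's own discussion.
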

We can make $K_1(A,\s)$ a group by the Grothendieck construction,
but this is not needed here.
The following lemma is a clear departure from the situation in
 \cite{pr} (it implies that the `obvious' map 
from $K_0(M(F,A))$
to $K_1(A,\s)$ is not well-defined).

\begin{lemma}\label{noinv} Recall, for all paths $\mu,\nu$ with 
$P_\mu=S_\mu S_\mu^*$ we have a modular unitary
$$ u_{\mu,\nu}=\bma 1-P_\mu & S_\mu S_\nu^*\\ S_\nu S_\mu^* &
1-P_\nu\ema.$$ 
 Then there is a modular homotopy
$ u_{\mu,\nu}\sim u_{\nu,\mu}.$
\end{lemma}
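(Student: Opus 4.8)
The plan is to use the scalar-conjugation device already employed in the section (in the proof that $1_k\oplus u$ is modular homotopic to $u\oplus 1_k$): I would realise $u_{\nu,\mu}$ as a conjugate of $u_{\mu,\nu}$ by a scalar unitary and then contract that scalar unitary to the identity. Concretely, first I would set
$$x_1=\bma 0 & i\\ i & 0\ema,\qquad x_t=\bma \cos(\pi t/2) & i\sin(\pi t/2)\\ i\sin(\pi t/2) & \cos(\pi t/2)\ema\quad(t\in[0,1]),$$
so that $\{x_t\}$ is a norm-continuous path of scalar unitaries in $M_2(\C)$ with $x_0=1_2$. A direct $2\times 2$ multiplication (in which the factors of $i$ and $-i$ cancel in pairs) gives $x_1\,u_{\mu,\nu}\,x_1^*=u_{\nu,\mu}$, so I would take $w_t:=x_t\,u_{\mu,\nu}\,x_t^*$, a continuous path of unitaries over $\A$ with $w_0=u_{\mu,\nu}$ and $w_1=u_{\nu,\mu}$.

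Next I would check that $\{w_t\}$ is a modular homotopy in the sense of the definition preceding Lemma \ref{noinv}. Since each $x_t$ has scalar entries it is fixed by $\s$ (equivalently by $\s\otimes Id_2$), so $\s(x_t)=x_t$ and $\s(x_t^*)=x_t^*$, and therefore
$$w_t\,\s(w_t^*)=x_t\big(u_{\mu,\nu}\,\s(u_{\mu,\nu}^*)\big)x_t^*,\qquad w_t^*\,\s(w_t)=x_t\big(u_{\mu,\nu}^*\,\s(u_{\mu,\nu})\big)x_t^*.$$
By the Example following the modular condition, $u_{\mu,\nu}$ is a modular unitary, so $u_{\mu,\nu}\,\s(u_{\mu,\nu}^*)$ and $u_{\mu,\nu}^*\,\s(u_{\mu,\nu})$ lie in $M_2(F)$; conjugating them by the scalar path $x_t$ keeps them in $M_2(F)$ and continuous in $t$. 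Hence each $w_t$ is itself a modular unitary and $t\mapsto w_t\,\s(w_t^*)$, $t\mapsto w_t^*\,\s(w_t)$ are continuous over $F$, so $\{w_t\}$ is a modular homotopy from $u_{\mu,\nu}$ to $u_{\nu,\mu}$; note that no stabilisation is needed.

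I do not expect a real obstacle here; the only point that needs a little care is the choice of phase in $x_1$. The bare transposition matrix $\bma 0&1\\1&0\ema$ already conjugates $u_{\mu,\nu}$ to $u_{\nu,\mu}$, but it has determinant $-1$; inserting the factors $i$ (which leave the conjugation identity unchanged) places the conjugator in $\mathbf{SU}(2)$ and makes the contraction to $1_2$ completely explicit and sign-free. Everything else is the routine verification above, together with the single structural fact that scalar matrices are $\s$-fixed, which is precisely what allows $x_t$ and $x_t^*$ to be pulled outside $\s$.
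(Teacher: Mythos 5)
Your proof is correct and is essentially the paper's argument: both proofs conjugate $u_{\mu,\nu}$ by a path of scalar unitaries (which are fixed by $\s\otimes Id_2$, so modularity is preserved), the only difference being that the paper uses a real rotation followed by a second phase homotopy to repair the resulting minus signs, whereas your $\mathbf{SU}(2)$ path with the factors of $i$ does it in a single step. The verification that $w_t\s(w_t^*)=x_t\bigl(u_{\mu,\nu}\s(u_{\mu,\nu}^*)\bigr)x_t^*$ stays in $M_2(F)$ and varies continuously is exactly the point the paper leaves to the reader.
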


\begin{proof} We do the homotopy in two steps. The first is given
by conjugating $u_{\mu,\nu}$ by the scalar unitary matrix
$$ \bma \cos\theta&\sin\theta\\-\sin\theta&\cos\theta\ema,\ \
\theta\in[0,\pi/2],$$ which takes us to
$$ \bma 1-P_\nu & -S_\nu S_\mu^*\\-S_\mu S_\nu^* & 1-P_\mu\ema.$$
Then for $\theta\in[0,\pi]$ we consider
$$\bma 1-P_\nu & e^{i\theta}S_\nu S_\mu^*\\e^{-i\theta}S_\mu S_\nu^* &
1-P_\mu\ema.$$ The reader will readily confirm that these two
homotopies are modular.
\end{proof}

{\bf Example}. More generally, if $\s$ is a regular automorphism of a unital
$*$-algebra $\A$ with fixed point algebra $F$, $v\in \A$ is a partial
isometry with range and source projections in $F$, and furthermore {\bf if}
$v\s(v^*),\ v^*\s(v)$ lie in $F$, then 
$$u_v=\bma 1-v^*v & v^*\\ v & 1-vv^*\ema$$
is a modular unitary over $\A$, as the reader may check. The proof
of Lemma \ref{noinv} applies to these unitaries to show that $u_v\sim
u_{v^*}$.
 
%{\bf WE SEEM TO NEED THE SYMMETRIC HYPOTHESES IN $u$ AND $u^*$ FOR THE NEXT
%LEMMA ?}

\begin{lemma}\label{centre} Let $(\A,\HH,\D)$ be 
our modular spectral triple relative
to $(\cn,\tau_\Delta)$ 
and set $F=\A^\s$ and $\s:\A\to\A$.
Let $L^\infty(\Delta)=L^\infty(\D)$ be the von Neumann algebra generated by 
the spectral projections of $\Delta$
then $L^\infty(\Delta)\subset{\mathcal Z}(\cM)$.
Let $u\in\A$ be a unitary, then $\pi(u)Q\pi(u^*)\in\cM$ and 
$\pi(u^*)Q\pi(u)\in\cM$ for all spectral 
projections $Q$ 
of $\D$, if and only if $u$ is modular. That is, $\pi(u)\Delta \pi(u^*)$ 
and $\pi(u^*)\Delta \pi(u)$
(or $\pi(u)\D \pi(u^*)$ and $\pi(u^*)\D \pi(u)$) 
are both affiliated to $\cM$ if and only if $u$ is modular.
\end{lemma}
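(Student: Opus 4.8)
The plan is to prove the three assertions in sequence, since each feeds the next. First I would establish $L^\infty(\Delta)\subset\mathcal Z(\cM)$. Recall from Lemma~\ref{tomita} that each generator $S_\mu S_\nu^*$ is an eigenvector of $\Delta$ with eigenvalue $n^{|\nu|-|\mu|}$, and from the Notation preceding Lemma~\ref{commutes} that $\cM=\sum_{k\in\Z}\Phi_k\cn\Phi_k$ is precisely the relative commutant of the spectral projections $\{\Phi_k\}$ of $\D$ (equivalently of $\Delta=n^{-\D}$). The spectral projections of $\Delta$ are exactly (finite or countable sums of) the $\Phi_k$, so $L^\infty(\Delta)$ is generated by $\{\Phi_k\}$; on the one hand these lie in $\cM$ (as $\Phi_k\Phi_j=\delta_{kj}\Phi_k$ commutes with all $\Phi_j$), and on the other hand every element of $\cM$ commutes with every $\Phi_k$ by construction. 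Hence $L^\infty(\Delta)$ sits in the centre of $\cM$.

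Next I would prove the "if and only if" for spectral projections. For the forward direction, suppose $u$ is modular, i.e. $u\s(u^*)$ and $u^*\s(u)$ are matrices over $F$. Since $\pi(\s(a))=\Delta^{-1}\pi(a)\Delta$ and $\Delta^{-1}$ commutes with each $\Phi_k$ (as $\Phi_k\in L^\infty(\Delta)$), we get for any spectral projection $Q=\sum_{k\in S}\Phi_k$ of $\D$ that $\pi(u)Q\pi(u^*)$ commutes with $\Delta^{it}$ iff $\s_t^{\tau_\Delta}$ fixes it, and by Lemma~\ref{commutes} membership in $\cM$ is equivalent to being fixed by $\s_t^{\tau_\Delta}(T)=\Delta^{it}T\Delta^{-it}$. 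So the claim reduces to showing $\Delta^{it}\pi(u)\Phi_k\pi(u^*)\Delta^{-it}=\pi(u)\Phi_k\pi(u^*)$. Writing $\Delta^{it}\pi(u)\Delta^{-it}=\pi(\s_{-t}(u))$ and using $\Delta^{it}\Phi_k\Delta^{-it}=\Phi_k$, the left side becomes $\pi(\s_{-t}(u))\Phi_k\pi(\s_{-t}(u^*))$. Now $\s_{-t}(u)=\pi(u)\cdot\big(\pi(u^*)\s_{-t}(u)\big)$ and $\pi(u^*)\s_{-t}(u)$ — obtained by exponentiating/integrating the modular flow applied to the element $u^*\s(u)$, which lies over $F$ and hence is $\s_t$-fixed — is again over $F$, so it commutes with $\Phi_k$. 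Pushing these $F$-valued factors through $\Phi_k$ and cancelling gives exactly $\pi(u)\Phi_k\pi(u^*)$. The same argument with $u$ replaced by $u^*$ handles the other compression. For the converse, assume $\pi(u)Q\pi(u^*)\in\cM$ for all spectral projections $Q$; taking $Q=\Phi_k$ and using that $\cM$-membership means commuting with every $\Phi_j$, one extracts that $\pi(u^*)\s(u)$ and $\pi(u)\s(u^*)$ must commute with all $\Phi_j$, i.e. lie in $\pi(F)''\cap\pi(\A)$; combined with the fact (from the computation of $[\D,\pi(a)]$ in Lemma~\ref{phiclosed}) that the degree-zero part of any element of $\pi(O_{nc})$ is its $\Phi$-expectation, one concludes these elements actually lie over $F$, so $u$ is modular. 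The last sentence of the statement — affiliation of $\pi(u)\Delta\pi(u^*)$, $\pi(u^*)\Delta\pi(u)$ (or with $\D$ in place of $\Delta$) to $\cM$ — then follows by applying the spectral-projection criterion to the resolvent or bounded Borel functions: $\pi(u)f(\D)\pi(u^*)\in\cM$ for all bounded Borel $f$ iff it holds for all spectral projections $f=\chi_S$, and affiliation is precisely this statement for unbounded $\D$ or $\Delta=n^{-\D}$ (whose spectral projections coincide).

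I expect the main obstacle to be the converse direction: turning the algebraic hypothesis "$\pi(u)\Phi_k\pi(u^*)$ commutes with all $\Phi_j$" into the conclusion that $u^*\s(u)$ and $u\s(u^*)$ lie over $F$ rather than merely in the weak closure $\pi(F)''$ intersected with something. The point is that one needs to know $u\in\A=O_{nc}$ (so it is a finite linear combination of monomials $S_\mu S_\nu^*$, or a matrix of such) and that $\s$ acts on monomials by a scalar; then $u\s(u^*)$ and $u^*\s(u)$ are again \emph{finite} combinations of monomials, and commuting with all $\Phi_j$ forces every monomial appearing to have $|\mu|=|\nu|$, hence to lie in $F_c$. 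Making this "finite combination of monomials" bookkeeping precise — and handling the matrix-algebra case $\A=O_{nc}\otimes M_k$ uniformly — is the fiddly part; everything else is a short consequence of Lemma~\ref{tomita}, Lemma~\ref{commutes}, and the description $\cM=\sum_k\Phi_k\cn\Phi_k$.
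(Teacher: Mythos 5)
Your first assertion and your converse follow the paper's own route (the converse is exactly the paper's argument: conjugate the hypothesis by $\Delta^{\pm1}$, rearrange to get $Q=\pi(u^*\s(u))Q\pi(\s(u^*)u)$, use invertibility of $\s(u^*)u$ to conclude $[\pi(u^*\s(u)),Q]=0$ for all $Q$, and then identify $\cM\cap\pi(\A)=\pi(F)$ via the graded decomposition of elements of $O_{nc}$). But the forward direction as you have written it contains a genuine gap. You test membership in $\cM$ by invariance under the \emph{real-time} flow $T\mapsto \Delta^{it}T\Delta^{-it}$, which forces you to know that $u^*\s_{-t}(u)$ lies over $F$ for all real $t$. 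The modular condition only gives you this at the single imaginary time $t=i$, since $\s=\s_i=\Delta^{-1}(\cdot)$. Your justification --- that $u^*\s_{-t}(u)$ is ``obtained by exponentiating/integrating the modular flow applied to $u^*\s(u)$'' --- is not a valid derivation: $u^*\s_{-t}(u)$ is not the image of $u^*\s(u)$ under any flow, and the cocycle identity $w_{s+t}=w_s\,\s_s(w_t)$ for $w_t=u^*\s_t(u)$ only propagates the hypothesis along integer multiples of $i$, not to real $t$. Concretely, writing $u=\sum_k u_k$ in graded pieces, the hypothesis $u^*\s(u)\in F$ says $\sum_j n^j u_j^*u_{j+m}=0$ for $m\neq 0$, whereas $u^*\s_t(u)\in F$ for all real $t$ would require $\sum_j n^{ijt}u_j^*u_{j+m}=0$ for all $t$, i.e. $u_j^*u_{j+m}=0$ for every $j$ and every $m\neq0$ --- a statement that is in fact \emph{equivalent} to the conclusion of the lemma, not something you may assume while proving it.

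The repair is to do what the paper does: check invariance under conjugation by $\Delta$ itself rather than by $\Delta^{it}$. Since $\pi(u)Q\pi(u^*)$ is a bounded projection and $\Delta$ has pure point spectrum with eigenprojections $\Phi_k$, the identity $\Delta^{-1}\pi(u)Q\pi(u^*)\Delta=\pi(u)Q\pi(u^*)$ (on the Tomita algebra, say) is equivalent to commuting with every $\Phi_k$, hence to membership in $\cM$. Inserting $\Delta\Delta^{-1}$ turns the left side into $\pi(\s(u))Q\pi(\s(u^*))=\pi(u)\pi(u^*\s(u))Q\pi(\s(u^*))$, and now the hypothesis $u^*\s(u)\in F$ is exactly what is needed to commute this factor past $Q$ and cancel, using $\s(u)\s(u^*)=\s(uu^*)=1$. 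With that substitution your argument closes; the rest of your proposal (the $L^\infty(\Delta)\subset{\mathcal Z}(\cM)$ statement, the converse, and the reduction of the affiliation claim to spectral projections) is sound and matches the paper.
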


\begin{proof} First, $L^\infty(\Delta)$ is an abelian algebra. 
By Lemma \ref{commutes} all the $\Phi_k$ are in $\cm$ and since
the $\Phi_k$ are also the spectral projections of $\Delta$, we have
$L^\infty(\Delta)$ is contained in the
centre. (Note that this extends the fact
that $\D$ commutes with $\pi(F)=\pi(\A^\s)$).
Next we observe that $\pi(u)Q\pi(u^*)$ is a projection in $\cn$. For one
direction, suppose $u$ is modular, then we
have
\bean \Delta^{-1}\pi(u)Q\pi(u^*)\Delta&=&
\Delta^{-1}\pi(u)\Delta\Delta^{-1}Q\Delta\Delta^{-1}\pi(u^*)\Delta\ \ \ \ \ \ \ Q\in\cM\nno
&=&\pi(\s(u))Q\pi(\s(u^*))\nno
&=& \pi(u)\pi(u^*\s(u))Q\pi(\s(u^*))\nno
&=&\pi(u)Q\pi(u^*\s(u)\s(u^*)),\ \ \ \ \ \ \ u^*\s(u)\in F\nno
&=&\pi(u)Q\pi(u^*).\eean
Hence $\pi(u)Q\pi(u^*)$ commutes with $\Delta$, and so is in $\cM$. 
Similarly, $u\s(u^*)\in F$ implies
that $\pi(u^*)Q\pi(u)\in\cM$. On the other hand if $\pi(u)Q\pi(u^*)\in\cM$
then
$$ \pi(u)Q\pi(u^*)=\Delta^{-1}\pi(u)Q\pi(u^*)\Delta=\pi(\s(u))Q\pi(\s(u^*))$$
and so we have 
$$Q=\pi(u^*\s(u))Q\pi(\s(u^*)u)=Q+[\pi(u^*\s(u)),Q]\pi(\s(u^*)u).$$
As $\s(u^*)u$ is  invertible, we see that
$[\pi(u^*\s(u)),Q]=0$. Since $\pi(u^*\s(u))\in\pi(\A)$, and commutes with all 
$Q$, we have $\pi(u^*\s(u))\in\cm$ and so
lies in $\pi(F)=\cM\cap\pi(\A)$. That is, $u^*\s(u)\in F.$ 
Similarly, $\pi(u^*)Q\pi(u)\in\cM$ implies that $u\s(u^*)\in F.$
\end{proof}

The fundamental aspect of the last lemma is that modular
unitaries conjugate $\Delta$ to an operator affiliated to $\cM$, and so 
$u\Delta u^*$ commutes with $\Delta$ (and $u\D u^*$ commutes with $\D$). 
%We can not say very much about the general structure of the modular
%unitaries. 
%However, the evidence of the examples leads us to
%conjecture the general form.
%\begin{conj*} Let $\A$ be a smooth $*$-algebra and $\s:\A\to\A$ a
%regular automorphism. Then 
%$$ K_1(\A,\s)=G\oplus\{[u_v]\}$$
%where $G$ is an abelian group and the modular homotopy classes of the
%modular unitaries $u_v$ arising from partial isometries $v$ with range
%and source in $\A^\s$ form a semigroup. Moreover, $G$ is a quotient of
%$K_1(\A^\s)$.
%\end{conj*}
We will next show  that there is a pairing between (part of) modular
$K_1$ and modular spectral triples. To do this, we are going to use
the analytic formulae for spectral flow 
in \cite{CP2}. 
%Before we can do this
%effectively, we need to address the eta-type correction terms which
%enter into the general formula.

\section{An $\LL^{(1,\infty)}$ local index formula}

In this Section we will couch our results in terms of the notion of a 
modular spectral triple. That is we will assume properties (0) to (3)
listed in Section 3 apply. Of course at this time the only examples 
we have presented are the matrix algebras over the smooth subalgebra $O_{nc}$
of the Cuntz algebra. However, we know from work in progress that there are 
other examples and hence it is worth arguing directly from
the general properties and avoiding the explicit formulae of the Cuntz example.

\subsection{The spectral flow formula: correction
terms} The spectral flow formula of \cite{CP2} is, a priori,
complicated in our setting. This is because we are computing
the spectral flow between two operators which {\bf are not}
unitarily equivalent via a unitary {\bf in} $\cM$. Thus we must consider
$\eta$-type correction terms.
We will also recognise that the spectral flow we are calculating
depends on the choice of trace $\phi$ on $\cM$ and use the notation
$sf_\phi$.
We now quote  \cite[Corollary 8.11]{CP2}.

\begin{prop} Let $(\A,\HH,\D_0)$ be an odd unbounded %$QC^\infty$ 
$\theta$-summable semifinite
spectral triple relative to $(\cM,\phi)$. For any $\epsilon>0$ we
define a one-form $\al^\epsilon$ on $\cM_0=\D_0+\cM_{sa}$ by
$$\al^\epsilon(A)=\sqrt{\frac{\epsilon}{\pi}}\phi(Ae^{-\epsilon\D^2})$$
for $\D\in\cM_0$ and $A\in T_\D(\cM_0)=\cM_{sa}$. Then the
integral of $\al^\epsilon$ is independent of the piecewise $C^1$
path in $\cM_0$ and if $\{\D_t\}_{t\in[a,b]}$ is any piecewise
$C^1$ path in $\cM_0$ then $$
sf_\phi(D_a,D_b)=\sqrt{\frac{\epsilon}{\pi}}\int_a^b
\phi(\D_t'e^{-\epsilon\D_t^2})dt
+\frac{1}{2}\eta_\epsilon(\D_b)
-\frac{1}{2}\eta_\epsilon(\D_a)+\frac{1}{2}
\phi\left([\ker(\D_b)]-[\ker(\D_a)]\right).$$
where the following integral converges for all $\epsilon>0$
$$\eta_\epsilon(\D)=\frac{1}{\sqrt{\pi}}\int_\epsilon^\infty
\phi(\D e^{-t\D^2})t^{-1/2}dt.$$

\end{prop}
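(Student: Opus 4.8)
The statement is \cite[Corollary 8.11]{CP2}, so here I only outline the argument one would give. The whole computation takes place inside the semifinite trace $\phi$ on $\cM$, and two facts legitimise every manipulation below: $\theta$-summability, which makes $e^{-\epsilon\D^2}$ a $\phi$-trace-class operator for each $\epsilon>0$, and the boundedness of the velocity vectors $\D_t'\in\cM_{sa}$, which makes the Duhamel expansion of $\partial_s\,e^{-\epsilon(\D+sA)^2}$ converge in $\LL^1(\cM,\phi)$; one may then use $\phi(XY)=\phi(YX)$ freely, together with $[\D,e^{-u\D^2}]=0$.

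The plan is to show first that $\al^\epsilon$ is closed, hence that its integral along a piecewise $C^1$ path in the contractible affine space $\cM_0=\D_0+\cM_{sa}$ depends only on the endpoints. For a smooth two-parameter family $\D_{s,t}\in\cM_0$ the velocities $\partial_s\D,\partial_t\D$ are \emph{constant} elements of $\cM_{sa}$, so $\partial_s\partial_t\D=0$; differentiating $\al^\epsilon(\partial_t\D)$ in $s$, expanding the heat kernel by Duhamel, collapsing that integral via $[\D,e^{-u\D^2}]=0$ and symmetrising under $\phi$, one obtains $\partial_s\,\al^\epsilon(\partial_t\D)=\partial_t\,\al^\epsilon(\partial_s\D)$. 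Next I would identify a primitive away from the singular locus $\{\D:0\in\operatorname{spec}\D\}$: the same Duhamel-plus-cyclicity computation applied to $u\mapsto\phi(\D e^{-u\D^2})$ yields, for $A\in\cM_{sa}$,
\[
d\eta_\epsilon\big|_\D(A)=\frac{1}{\sqrt\pi}\Big[\,2u^{1/2}\phi\big(Ae^{-u\D^2}\big)\,\Big]_{u=\epsilon}^{u=\infty},
\]
the $u$-integrand being $\phi\big(A\,\tfrac{d}{du}(2u^{1/2}e^{-u\D^2})\big)$. When $\D$ is invertible the boundary term at $u=\infty$ vanishes and $d\eta_\epsilon=-2\al^\epsilon$ there, so $\D\mapsto\tfrac12\eta_\epsilon(\D)+\int_{\D_0}^{\D}\al^\epsilon$ is locally constant on the invertible elements of $\cM_0$; on the singular locus $\eta_\epsilon$ jumps, since per spectral value $\lambda$ the substitution $v=u\lambda^2$ turns its integrand contribution into $\operatorname{sgn}(\lambda)\int_{\epsilon\lambda^2}^{\infty}e^{-v}v^{-1/2}\,dv$, which runs from $-\sqrt\pi$ to $+\sqrt\pi$ as $\lambda$ crosses $0$ upward --- a jump of $2$ (weighted by $\phi$) per eigenvalue crossing, whose half is exactly the instantaneous change of $sf_\phi$.

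To assemble the formula I would first take a path with $\D_a,\D_b$ invertible and compare $sf_\phi(\D_a,\D_b)$ --- defined in \cite{CP1,CP2} as a finite sum of $\phi$-relative dimensions of spectral projections along a partition of $[a,b]$ --- with $\int_a^b\al^\epsilon(\D_t')\,dt$, the discrepancy at each partition point being cancelled by the telescoping boundary values of $\eta_\epsilon$ found above; in this case the $\ker$ term is absent. The general case I would reduce to this one by replacing $\D_a,\D_b$ with $\D_a+\delta,\D_b+\delta$ for small $\delta>0$, using continuity of $\eta_\epsilon$ and of the integral, together with the fact that moving an endpoint off the singular locus changes $sf_\phi$ by a relative-index term equal to the $\phi$-dimension of the kernel, and then letting $\delta\to0^+$; this is precisely what produces the $\tfrac12\,\phi([\ker\D_b]-[\ker\D_a])$ correction.

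The hard part will be entirely analytic: justifying the interchange of $\tfrac{d}{ds}$ with $\phi$ and with the Duhamel integral in \emph{trace} norm --- which needs estimates on the generalised singular values of $\D e^{-u\D^2}$ sharp enough to control the $u\to\infty$ tail that defines $\eta_\epsilon$, since $\theta$-summability only bounds $e^{-u\D^2}$ itself --- and, more a matter of bookkeeping, fixing the sign conventions at crossings and at the ends of $[a,b]$ so that the kernel correction emerges with coefficient exactly $\tfrac12$. Since all of this is carried out in \cite{CP2}, we simply cite it there.
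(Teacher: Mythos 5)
The paper offers no proof of this proposition at all — it is stated verbatim as a quotation of \cite[Corollary 8.11]{CP2} — and your proposal likewise ultimately defers to that reference, so you are taking the same route as the paper. Your added outline (exactness of $\al^\epsilon$ via Duhamel and cyclicity of $\phi$, the eta invariant as a local primitive off the singular locus, and the kernel corrections from perturbing non-invertible endpoints) is a faithful summary of the argument actually carried out in \cite{CP2}, so nothing further is required.
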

We note that the 
 $\eta$ terms are measures of  $\phi$-spectral asymmetry.
We
will show that for the pair $\D,\tau_\Delta$ 
we use on the Cuntz algebra, and the kinds of
perturbations we consider, these $\eta$ terms vanish. Moreover we
will show that the $\tau_\Delta$-dimension of the kernel of $\D$ is
unchanged by the particular type
 of perturbations we consider, so these correction
terms will cancel. First we must show that we are actually working
with the right kinds of perturbations, that is, elements
in $\cm_{sa}.$

\noindent{\bf Notation}. We denote the densely defined spatial homomorphism
on $\cn$, $T\mapsto \Delta^{-1}T\Delta$ by $\s_i(T)$, so that for
$a\in\A$ we have $\pi(\s(a))=\s_i(\pi(a)).$ We observe that $\cm$,
and $\pi(\A)$ are in the domain of $\s_i$, and that $\cm$ is exactly the
fixed point subalgebra of $\s_i.$

\begin{lemma} Let $(\A,\HH,\D)$ be a modular spectral triple. If $u$ is a 
modular unitary, then $\pi(u)[\D,\pi(u^*)]\in\cM_{sa}$. This is a {\bf key fact} 
which allows us to directly use results
about semifinite spectral flow in $(\cm,\tau_\Delta)$ from \cite{CP2}.
\end{lemma}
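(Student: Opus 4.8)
The statement to prove is that for a modular unitary $u$, the operator $\pi(u)[\D,\pi(u^*)]$ lies in $\cM_{sa}$. There are two things to check: self-adjointness, and membership in $\cM$ (equivalently, $\s_i$-invariance, by Lemma~\ref{commutes} and the Notation preceding this lemma). I would do the $\cM$-membership first, since it is the substantive point, and leave self-adjointness as a short computation at the end.

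For membership in $\cM$: the first step is to rewrite $\pi(u)[\D,\pi(u^*)]$ in a form where Lemma~\ref{centre} can be applied. Expanding the commutator, $\pi(u)[\D,\pi(u^*)] = \pi(u)\D\pi(u^*) - \D$, since $\pi(u)\pi(u^*) = 1$. Now $\D$ itself is affiliated to $\cM$ (property~(3) of a modular spectral triple, and Lemma~\ref{commutes}), so it suffices to show $\pi(u)\D\pi(u^*)$ is affiliated to $\cM$. But this is exactly the content of the last sentence of Lemma~\ref{centre}: for $u$ modular, $\pi(u)\D\pi(u^*)$ is affiliated to $\cM$. (One should note that $\pi(u)[\D,\pi(u^*)]$ is a priori bounded — this is part of the modular spectral triple data, property~(3), together with the modular condition, which via Lemma~\ref{centre} forces $\pi(u)\D\pi(u^*)$ and $\D$ to have the same domain so the difference is bounded — hence it is an element of $\cM$, not merely affiliated to it.)

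For self-adjointness: write $b = \pi(u)[\D,\pi(u^*)]$. Using $[\D,\pi(u^*)]^* = -[\D,\pi(u)]$ (valid since $\D$ is self-adjoint and these commutators are bounded, so the formal adjoint computation is legitimate), one gets $b^* = [\D,\pi(u^*)]^*\pi(u)^* = -[\D,\pi(u)]\pi(u^*)$. Then $b + b^* = \pi(u)[\D,\pi(u^*)] - [\D,\pi(u)]\pi(u^*)$, and a short manipulation — adding and subtracting, or directly differentiating the identity $\pi(u)\pi(u^*) = 1$, which gives $[\D,\pi(u)]\pi(u^*) + \pi(u)[\D,\pi(u^*)] = 0$ — shows $b^* = b$. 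Alternatively one can observe $b = \pi(u)\D\pi(u^*) - \D$ is manifestly a difference of two self-adjoint operators that (by Lemma~\ref{centre}) have the same domain, so the difference is self-adjoint.

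**Main obstacle.** The genuine work has all been front-loaded into Lemma~\ref{centre}, so the proof here is essentially an assembly of pieces already in hand; the only point requiring a little care is the domain issue — making sure $\pi(u)\D\pi(u^*) - \D$ genuinely defines a bounded self-adjoint element of $\cM$ rather than just an operator affiliated to $\cM$. This is handled by the boundedness of $[\D,\pi(u^*)]$ from the modular spectral triple axioms together with the fact, implicit in Lemma~\ref{centre}, that the modular condition is exactly what makes $\pi(u)\D\pi(u^*)$ a well-behaved perturbation of $\D$ inside $\cM$.
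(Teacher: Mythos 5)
Your proof is correct, but it takes a somewhat different route from the paper's. The paper establishes membership in $\cM$ by a direct one-line computation: since $[\D,\pi(u^*)]\in\pi(\A)$ (a fact recorded in the proof of Lemma \ref{phiclosed}), it applies the spatial automorphism $\s_i(\cdot)=\Delta^{-1}(\cdot)\Delta$ to the whole element $\pi(u)[\D,\pi(u^*)]$ and uses the modular condition $u^*\s(u)\in F$ to see that the element is $\s_i$-fixed, hence lies in the fixed point algebra $\cM$. You instead decompose $\pi(u)[\D,\pi(u^*)]=\pi(u)\D\pi(u^*)-\D$ and invoke the final assertion of Lemma \ref{centre}. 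Since the ``only if'' direction of that lemma is proved by essentially the same conjugation-by-$\Delta$ computation, the two arguments rest on the same mechanism; what your version buys is economy (no recomputation) plus two points the paper leaves implicit: the self-adjointness of $\pi(u)[\D,\pi(u^*)]$ (cleanly obtained from $[\D,\pi(u)\pi(u^*)]=0$) and the passage from ``bounded and affiliated to $\cM$'' to ``an element of $\cM$''. One small imprecision: the boundedness of $\pi(u)[\D,\pi(u^*)]$ follows solely from property (3) of the modular spectral triple ($[\D,\pi(a)]$ extends to a bounded operator for all $a\in\A$, applied to $a=u^*$) and does not require the modular condition or Lemma \ref{centre}; the modular condition is needed only for the $\cM$-membership, not for boundedness.
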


\begin{proof} We just compute the action of $\s_i$ on $\pi(u)[\D,\pi(u^*)].$
As observed above the operator $[\D,\pi(u^*)]\in \pi(\A),$ and we
easily calculate:
$$\s_i(\pi(u)[\D,\pi(u^*)])=\pi(\s(u))[\D,\pi(\s(u^*))]=
\pi(uu^*\s(u))[\D,\pi(\s(u^*))]=\pi(u)[\D,\pi(u^*)].$$
\end{proof}

%{\bf Remark}. Similar comments apply to $u[(1+\D^2)^{1/2},u^*]$ and
%various other permutations.
\begin{rems*}
In the following few pages we will sometimes abuse notation and write $a$ 
in place of $\pi(a)$ for $a\in\A$ in order to make our formulae more readable. 
Whenever we do this, however, we will use $\s_i(\cdot)=\Delta^{-1}(\cdot)\Delta$
the spatial version of the algebra homomorphism, $\s$. We will generally use
the spatial version $\s_i$ when in the presence of operators not in $\pi(\A).$
\end{rems*}

\begin{lemma}\label{corrections} Let $(\A,\HH,\D)$ be our 
modular spectral triple for the Cuntz algebra
%relative
%to $(\cn,\phi)$, 
and let
$u$ be a modular  unitary. Then
$$\tau_\Delta([\ker(\D)]-[\ker (u\D u^*)])=
\tau_\Delta((1-\s_i(u^*)u)[\ker(\D)])=\tau(1-\s(u^*)u),$$
and for all $\epsilon>0$,
$ \eta_\epsilon(u\D u^*)
%-\eta_\epsilon(\D)=\frac{1}{\sqrt{\pi}}
%\int_\epsilon^\infty \tau_\Delta((\s_i(u^*)u-1)\D e^{-t\D^2})t^{-1/2}dt$$
%$$=\tau((\s(u^*)u-1))\frac{1}{\sqrt{\pi}}
%\int_\epsilon^\infty \tau_\Delta(\D e^{-t\D^2})t^{-1/2}dt=
=\tau(\s(u^*)u)\eta_\epsilon(\D).$
\end{lemma}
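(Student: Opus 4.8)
The plan is to reduce both assertions to the single identity
\[
\tau_\Delta\bigl(\pi(u)\Phi_k\pi(u^*)\bigr)=\tau(\s(u^*)u)\qquad(k\in\Z),
\]
$\Phi_k$ being the spectral projection of $\D$ for the eigenvalue $k$, and this rests in turn on the purely algebraic fact that $\s(u^*)u$ lies in $F$ (resp.\ $M_k(F)$). To see that fact: by Lemma \ref{commutes}, the half $u\s(u^*)\in F=\A^\s=\A^{\s^{-1}}$ of the modular condition says $\s^{-1}\bigl(u\s(u^*)\bigr)=u\s(u^*)$, i.e.\ $\s^{-1}(u)u^*=u\s(u^*)$; multiplying on the right by $u$ gives $\s^{-1}(u)=u\s(u^*)u$, and then on the left by $u^*$ gives $u^*\s^{-1}(u)=\s(u^*)u$. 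Applying $\s$ to the left side and reusing this identity yields $\s\bigl(u^*\s^{-1}(u)\bigr)=\s(u^*)u=u^*\s^{-1}(u)$, so $\s(u^*)u=u^*\s^{-1}(u)\in\A^\s=F$ (the matrix case is identical entrywise). In particular $\pi(\s(u^*)u)\in\pi(F)$ commutes with every $\Phi_k$ (as $\D$ commutes with $\pi(F)$), and $\s(u^*)u$ is self-adjoint, so $\tau(\s(u^*)u)\in\R$.

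For the key identity I would invoke the KMS property of the weight $\tau_\Delta$ for its inner modular group $\s_t^{\tau_\Delta}(T)=\Delta^{it}T\Delta^{-it}$, in the form $\tau_\Delta(xy)=\tau_\Delta(\s_i(y)x)$ with $\s_i(\cdot)=\Delta^{-1}(\cdot)\Delta$. Taking $x=\pi(u)\Phi_k$ and $y=\Phi_k\pi(u^*)$ — both analytic for $\s_t^{\tau_\Delta}$, with $xy=\pi(u)\Phi_k\pi(u^*)$ lying in $\cM$ (Lemma \ref{centre}), supported on only finitely many eigenspaces of $\D$ and hence of finite $\tau_\Delta$-trace — and using $\s_i(\Phi_k)=\Phi_k$ together with $\s_i(\pi(u^*))=\pi(\s(u^*))$, one gets
\[
\tau_\Delta\bigl(\pi(u)\Phi_k\pi(u^*)\bigr)=\tau_\Delta\bigl(\Phi_k\pi(\s(u^*))\pi(u)\Phi_k\bigr)=\tau_\Delta\bigl(\pi(\s(u^*)u)\Phi_k\bigr)=\tau(\s(u^*)u),
\]
the last step by Lemma \ref{tracesplit} with $g=\chi_{\{k\}}$ (so $g(\D)=\Phi_k$, $\sum_j g(j)=1$) and $\s(u^*)u\in F$. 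One can also bypass the KMS property: writing $\tau_\Delta(T)=\sum_j n^{-j}\tilde\tau(\Phi_j T\Phi_j)$ and using that $\tilde\tau$ is a genuine trace on $\cn$ (Proposition \ref{tildetrace}) to cyclically rearrange $\pi(u^*)\Phi_j\pi(u)$, the sum recombines — finitely, since $\pi(u)$ shifts degree boundedly, via $\sum_j n^{-j}\Phi_j\pi(u)\Phi_k=\Delta\pi(u)\Phi_k$ — to $n^{-k}\tilde\tau\bigl(\pi(\s(u^*)u)\Phi_k\bigr)=\tau(\s(u^*)u)$, using $\pi(u^*)\Delta\pi(u)=\Delta\pi(\s(u^*)u)$, $\Delta\Phi_k=n^{-k}\Phi_k$, and $\tilde\tau(\pi(f)\Phi_k)=n^k\tau(f)$ for $f\in F$ from the proof of Lemma \ref{tracesplit}.

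The kernel statement is then immediate. As $u\D u^*$ is self-adjoint with $\ker(u\D u^*)=\pi(u)\ker(\D)$, its kernel projection is $\pi(u)\Phi_0\pi(u^*)\in\cM$, whence
\[
\tau_\Delta\bigl([\ker(\D)]-[\ker(u\D u^*)]\bigr)=\tau_\Delta(\Phi_0)-\tau(\s(u^*)u)=\tau(1)-\tau(\s(u^*)u)=\tau(1-\s(u^*)u),
\]
using $\tau_\Delta(\Phi_0)=\tau(1)$ from Lemma \ref{tracesplit}; and the middle expression equals $\tau_\Delta(\Phi_0)-\tau_\Delta(\pi(\s(u^*)u)\Phi_0)=\tau_\Delta\bigl((1-\s_i(u^*)u)[\ker(\D)]\bigr)$, again by Lemma \ref{tracesplit}. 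For the $\eta$-term, $u$ unitary gives $(u\D u^*)^2=u\D^2u^*$, hence $e^{-t(u\D u^*)^2}=\pi(u)e^{-t\D^2}\pi(u^*)$ and $u\D u^*e^{-t(u\D u^*)^2}=\pi(u)\,\D e^{-t\D^2}\,\pi(u^*)$. Expanding $\D e^{-t\D^2}=\sum_{k\in\Z}k e^{-tk^2}\Phi_k$ and applying the key identity termwise ($\tau_\Delta$ being normal and the series rapidly convergent),
\[
\tau_\Delta\bigl(u\D u^*e^{-t(u\D u^*)^2}\bigr)=\sum_{k\in\Z}k e^{-tk^2}\,\tau_\Delta\bigl(\pi(u)\Phi_k\pi(u^*)\bigr)=\tau(\s(u^*)u)\sum_{k\in\Z}k e^{-tk^2}=\tau(\s(u^*)u)\,\tau_\Delta\bigl(\D e^{-t\D^2}\bigr),
\]
and multiplying by $t^{-1/2}/\sqrt\pi$ and integrating over $[\epsilon,\infty)$ yields $\eta_\epsilon(u\D u^*)=\tau(\s(u^*)u)\,\eta_\epsilon(\D)$ for all $\epsilon>0$ (both sides in fact vanish, since $\tau_\Delta(\Phi_k)=\tau_\Delta(\Phi_{-k})=1$ forces $\sum_k k e^{-tk^2}=0$).

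The step I expect to be the main obstacle is moving $\pi(u)$, which does not belong to $\cM$, past the trace-like functional $\tau_\Delta$: this is exactly what forces the use of the algebraic identity $\s(u^*)u\in F$, and it is where the analyticity and integrability hypotheses behind the KMS property of $\tau_\Delta$ (or, on the alternative route, the convergence of the $\tilde\tau$-resummation) have to be verified with some care. Everything downstream of $\tau_\Delta(\pi(u)\Phi_k\pi(u^*))=\tau(\s(u^*)u)$ is routine bookkeeping.
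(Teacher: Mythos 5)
Your proof is correct and follows essentially the same route as the paper's: you move $\pi(u)$ past $\tau_\Delta$ via the modular invariance (equivalently, the traciality of $\tilde\tau$ combined with $\tau_\Delta=\tilde\tau(\Delta\,\cdot\,)$), land on $\s_i(u^*)u\in\cM$, and factor with Lemma \ref{tracesplit}; your reduction to the single identity $\tau_\Delta(\pi(u)\Phi_k\pi(u^*))=\tau(\s(u^*)u)$ is just the paper's computation carried out spectral-projection-wise. The only genuine addition is your explicit verification that $\s(u^*)u$ lies in $F$, a fact the paper uses without comment.
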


\begin{proof} We show the second equality first. 
By the $\s_i$-invariance of $\tau_\Delta$ and the fact that $\s_i(u^*)u\in\cm$
we have, using Lemma \ref{tracesplit} in the last equality:
$$ \tau_\Delta(u\D u^*e^{-t(u\D u^*)^2})=\tau_\Delta(u\D e^{-t\D^2}u^*)=
\tau_\Delta(\s_i(u)\D e^{-t\D^2}\s_i(u^*))
=\tilde\tau(u\Delta \D e^{-t\D^2}\s_i(u^*))$$
$$=\tilde\tau(\Delta\D e^{-t\D^2}\s_i(u^*)u)=
\tilde\tau(\Delta\s_i(u^*)u\D e^{-t\D^2})=
\tau_\Delta(\s_i(u^*)u\D e^{-t\D^2})=
\tau(\s(u^*)u)\tau_\Delta(\D e^{-t\D^2}).$$
                                      
Thus, we have 
$$\eta_\epsilon(u\D u^*)=\frac{1}{\sqrt{\pi}}
\int_\epsilon^\infty\tau(\s(u^*)u)\tau_\Delta(\D e^{-t\D^2} )t^{-1/2}dt
=\tau(\s(u^*)u)\eta_\epsilon(\D),$$
as was to be shown. For the kernel we simply observe that
$[\ker (u\D u^*)]=u[\ker(\D)]u^*\in\cn,$
so that 
$$\tau_\Delta([\ker (u\D u^*)])=\tilde\tau(\Delta u[\ker(\D)]u^*)=
\tilde\tau(u[\ker(\D)]\Delta\Delta^{-1}u^*\Delta)=
\tau_\Delta(\s_i(u^*)u[\ker(\D)]).$$
Then, by Lemma \ref{tracesplit}, 
$\tau_\Delta(\s_i(u^*)u[\ker(\D)])=\tau(\s(u^*)u)\tau_\Delta([\ker(\D)])
=\tau(\s(u^*)u)\cdot 1.$
\end{proof}

%\begin{prop}\label{umunuworks}
 %Let $(\A,\HH,\D)$ be a $\theta$-summable 
%modular spectral triple (relative to $\cn,\tau$) such that $\D$
%commutes with $F$, and that $\D$ has discrete spectrum. 
%Let $u\in\A$ be a modular unitary of the form
%$u=u_v$ for some partial isometry $v$ with range and source projections
%in $F$, and $v\s(v^*),\ v^*\s(v)\in F$. 
%Then  
%$$\phi([\ker(u_v\D u_v^*)])-\phi([\ker(\D)])=0.$$
%\end{prop}

%\begin{proof} First we compute $1-\s(u_v^*)u_v$: 
%$$1-\s(u_v)u_v=\bma v^*v-\s(v^*)v &
%0\\ 0 & vv^*-\s(v)v^*\ema$$
%The kernel terms
%vanish because 
%$$\phi((v^*v-\s(v)v^*\Phi_0)+\phi((vv^*-\s(v^*)v)\Phi_0)
%=\phi(v^*v-\s(v)v^*)+\phi(vv^*-\s(v^*)v)=0.$$
%\end{proof}

%{\bf Remark}. We still need to take into account the eta invariant
%terms in the spectral flow formula.
%For modular unitaries which are unitaries over $F$, there
%are no $\eta$ correction terms, or kernel correction terms, but
%$u[\D,u^*]=0$, so there is no spectral flow either.

{\bf If} we have a modular unitary for which we 
have both $\eta_\epsilon(u_v\D u_v)-\eta_\epsilon(\D)=0$ and
$\phi([\ker\D])-\phi([\ker u\D u^*])=0$,
we may apply the Laplace transform technique discussed in
\cite[Section 9]{CP2} to reduce the $\theta$-summable formula to
the finitely summable formula. For $r>0$ this gives us
\begin{equation}\label{sf-formula} sf_\phi(\D,u\D
u^*)=
\frac{1}{C_{1/2+r}}\int_0^1\phi(u[\D,u^*](1+(\D+tu[\D,u^*])^2)^{-1/2-r})dt.
\end{equation}
We are now in a position to apply the methods employed in the 
proof of the semifinite local index
formula, \cite{CPS2} or \cite{CPRS2}, to compute an index pairing.

\subsection{A local index formula for the Cuntz algebras}

\begin{lemma}[cf \cite{CP2}]\label{oneform} 
Let $(\A,\HH,\D)$ be our $(1,\infty)$-summable
  modular spectral for the Cuntz algebra
triple for a matrix algebra $\A$ over $O_{nc}$. Let $\cM=\cn^\s$ be the fixed point
algebra for the modular automorphism group. The functional $\alpha$ defined on
the self adjoint elements   $\cM_{sa}$ of $\cM$ by
$$\alpha_{S}(T)=\widehat\tau(T(1+(\D+S)^2)^{-s/2}),\ \ \ T\in \cM_{sa}$$ 
for $s>1$ is an exact one form
on the tangent space to the
affine space $\cm_0=\cm_{sa}+\D$ of $\cm_{sa}$ perturbations of $\D$.
\end{lemma}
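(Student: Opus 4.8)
The plan is to run the argument of \cite{CP2} inside the semifinite pair $(\cM,\widehat\tau)$, the modular twist having already been absorbed in the passage from $\cn$ to $\cM$. First I would record why $\alpha$ is a well-defined $1$-form. By Proposition \ref{dixycomp}, $(1+\D^2)^{-1/2}\in\LL^{(1,\infty)}(\cM,\tau_\Delta)$; since $S\in\cM_{sa}$ is bounded and $\D$ is self-adjoint and affiliated to $\cM$, the operator $\D+S$ is self-adjoint and affiliated to $\cM$, and the standard bounded-perturbation estimate gives $(1+(\D+S)^2)^{-1/2}\in\LL^{(1,\infty)}(\cM,\tau_\Delta)$, hence $(1+(\D+S)^2)^{-s/2}\in\LL^1(\cM,\tau_\Delta)$ for every $s>1$. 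As $T\in\cM$, the product $T(1+(\D+S)^2)^{-s/2}$ is then $\widehat\tau$-trace-class, its value under $\widehat\tau$ is real when $T=T^*$ by the trace property, and it depends smoothly on the base point $\D+S$. Thus $\alpha$ is a smooth $\R$-valued $1$-form on the affine space $\cm_0=\cm_{sa}+\D$.

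The quickest route to exactness is the Mellin--Laplace reduction flagged before \eqref{sf-formula}. For $s>1$ and $S\in\cm_{sa}$, starting from $(1+(\D+S)^2)^{-s/2}=\Gamma(s/2)^{-1}\int_0^\infty t^{s/2-1}e^{-t}e^{-t(\D+S)^2}\,dt$ one obtains
\[
\alpha_S(T)=\frac{\sqrt\pi}{\Gamma(s/2)}\int_0^\infty t^{(s-3)/2}e^{-t}\,\alpha^t_S(T)\,dt,\qquad \alpha^t_S(A):=\sqrt{t/\pi}\;\widehat\tau\!\big(Ae^{-t(\D+S)^2}\big),
\]
the integrand $\alpha^t$ being precisely the $\theta$-summable $1$-form of \cite[Corollary 8.11]{CP2}. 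Our triple is $\theta$-summable in $(\cM,\widehat\tau)$ (immediately: $\D$ has integer spectrum with $\widehat\tau(\Phi_k)=1$, so $\widehat\tau(e^{-t\D^2})=\sum_{k\in\Z}e^{-tk^2}<\infty$), and the argument establishing \cite[Corollary 8.11]{CP2} uses only that $\D$ is affiliated to $\cM$, the $\theta$-summability, and that the perturbations range over $\cM_{sa}$; so it applies here and shows each $\alpha^t$ is exact on $\cm_0$, with primitive $\theta^t(\D+S)=\int_0^1\alpha^t_{vS}(S)\,dv$. Since the weight $t^{(s-3)/2}e^{-t}$ (with $s>1$) makes the $t$-integral converge absolutely in $\widehat\tau$-trace-norm, uniformly on compact subsets of $\cm_0$ --- the needed bounds being \eqref{cauchy} and Lemma \ref{phiclosed} --- one may differentiate under the $t$-integral and conclude that $\alpha$ is exact, with primitive $\theta_S=\frac{\sqrt\pi}{\Gamma(s/2)}\int_0^\infty t^{(s-3)/2}e^{-t}\,\theta^t\,dt$.

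Alternatively one can argue directly: $\cm_0$ is affine, hence contractible, so it is enough to show $\alpha$ is closed, and since the tangent spaces are canonically $\cM_{sa}$ this reduces to the symmetry $\partial_A\alpha_{\D'}(B)=\partial_B\alpha_{\D'}(A)$ for $A,B\in\cm_{sa}$, $\D'\in\cm_0$, where $\partial_A$ is the derivative along $\D'\mapsto\D'+tA$. Writing $G=1+\D'^2$, $K_\mu=(G+\mu)^{-1}$ and using $G^{-s/2}=c_s\int_0^\infty\mu^{-s/2}K_\mu\,d\mu$ with $c_s=\pi^{-1}\sin(\pi s/2)$ (valid for $1<s<2$; $s\ge 2$ follows by peeling off integer powers of $G^{-1}$, or by analytic continuation in $s$ via Lemma \ref{phiclosed}), differentiating under $\widehat\tau$ and the $\mu$-integral with $D_A[K_\mu]=-K_\mu\{A,\D'\}K_\mu$, and exploiting $[\D',K_\mu]=0$ together with the cyclicity of $\widehat\tau$, a short rearrangement yields
\[
\partial_A\alpha_{\D'}(B)=-c_s\int_0^\infty\mu^{-s/2}\,\widehat\tau\!\big(\{\D',A\}\,K_\mu\,B\,K_\mu\big)\,d\mu ,
\]
and one further use of cyclicity gives $\widehat\tau(\{\D',A\}K_\mu BK_\mu)=\widehat\tau(\{\D',B\}K_\mu AK_\mu)$, so the right-hand side is symmetric in $A$ and $B$; hence $\alpha$ is closed, and therefore exact by the Poincar\'e lemma.

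In either approach the algebra is routine; the real work is the analytic bookkeeping --- checking that every operator produced by differentiation is $\widehat\tau$-trace-class with trace norm integrable against $\mu^{-s/2}$ (respectively $t^{(s-3)/2}e^{-t}$), uniformly on compacta, so that Fubini and the interchanges of $\frac{d}{dt}$ with $\int$ and with $\widehat\tau$ are legitimate, and handling $s\ge 2$ uniformly. These are the $(\cM,\widehat\tau)$-translates of the estimates in \cite[\S\S 8--9]{CP2} and \cite{CPS2}, for which Proposition \ref{dixycomp}, Lemma \ref{phiclosed} and \eqref{cauchy} supply all the trace-class and continuity inputs, so the argument goes through with no essential change.
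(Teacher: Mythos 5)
The paper gives no proof of this lemma beyond the citation of \cite{CP2}: it quotes the $\theta$-summable exact one-form of \cite[Corollary 8.11]{CP2} as Proposition 5.1 and then invokes the Laplace-transform reduction of \cite[Section 9]{CP2}, which is exactly your first route carried out in the semifinite pair $(\cM,\widehat\tau)$, so your proposal is correct and takes essentially the same approach. Your second, direct closedness argument via the resolvent integral is a sound (if analytically fussier) alternative, but it is not what the paper relies on.
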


This fact is all that we need to calculate $\widehat\tau$-spectral flow along
paths in the affine space  $\cm_0$.
We will be  interested in $\widehat\tau$-spectral flow along the linear 
path joining
$\D$ to $\D +u[\D,u^*]$ where $u$ is a unitary in
$End_{F}(X)$ such that $u[\D,u^*]\in \cM_{sa}$. Since modular unitaries, $u$
satisfy these requirements,
we can now produce a formula for spectral flow which is analogous to the local 
index formula in noncommutative geometry. We remind the reader that 
$\tau_\Delta=\widehat\tau\circ\Psi$ where $\Psi:\cn\to\cm$ is the canonical
expectation, so that $\tau_\Delta$ {\bf restricted to} $\cm$ is $\widehat\tau.$

\begin{thm}\label{first} Let
$(\A,\HH,\D)$ be the %$QC^\infty$, 
$(1,\infty)$-summable, 
modular spectral triple for the Cuntz algebra
we have constructed previously.
%relative to $(\cn,\phi)$,  such that $\D$
%commutes with $F=\A^\s$. 
 Then for any modular
unitary 
%of the form $u_v$ with $v\in\A$ a partial isometry with range and
%source projections in $F$ and $v\s(v^*),\ v^*\s(v)\in F$.  Assume 
such that the difference of
eta terms $\eta_\epsilon(u\D u^*)-\eta_\epsilon(\D)$ and
$\widehat\tau([\ker\D])-\widehat\tau([\ker u\D u^*])$ 
vanishes, and for any
Dixmier trace $\widehat{\tau}_{\tilde\omega}$ associated to $\widehat\tau$, 
we have
spectral flow as an actual limit
$$sf_{\widehat\tau}(\D,u\D u^*)=\frac{1}{2}\lim_{s\to 1+}
(s-1)\widehat\tau(u[\D,u^*](1+\D^2)^{-s/2})
=\frac{1}{2}\widehat{\tau}_{\tilde\omega}(u[\D,u^*](1+\D^2)^{-1/2})
=\tau\circ\Phi(u[\D,u^*]).$$
The functional on $\A\otimes\A$ defined by
$a_0\otimes a_1\mapsto
\frac{1}{2}\lim_{s\to 1^+}(s-1)\tau_\Delta(a_0[\D,a_1](1+\D^2)^{-s/2})$
is a $\s$-twisted $b,B$-cocycle (see the proof below for the definition). 
%The spectral flow 
%depends only on the modular homotopy class
%of $u$.
\end{thm}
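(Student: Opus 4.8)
The plan is to prove the two assertions of Theorem \ref{first} in sequence, treating the cocycle identity as the genuinely new part and the spectral flow identity as an assembly of the machinery already developed. First I would establish the string of equalities for $sf_{\widehat\tau}(\D,u\D u^*)$. Starting from the hypothesis that the $\eta$-terms and kernel-dimension corrections vanish, Proposition 5.1 (Corollary 8.11 of \cite{CP2}, applied in $(\cm,\widehat\tau)$) collapses to the pure integral term, which by the Laplace transform argument of \cite[Section 9]{CP2} gives formula \eqref{sf-formula}: for any $r>0$, $sf_{\widehat\tau}(\D,u\D u^*)=C_{1/2+r}^{-1}\int_0^1\widehat\tau(u[\D,u^*](1+(\D+tu[\D,u^*])^2)^{-1/2-r})\,dt$. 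Then I would invoke Lemma \ref{oneform}: the integrand is an exact one-form on the affine space $\cm_0$, so the integral is path-independent and in particular can be taken along the trivial path, reducing it (after the standard $s\to 1^+$ renormalisation and the constant computation $C_{s/2}\sim 2/(s-1)$) to $\tfrac12\lim_{s\to 1^+}(s-1)\widehat\tau(u[\D,u^*](1+\D^2)^{-s/2})$. Finally, since $u[\D,u^*]\in\pi(O_{nc})$ (Lemma \ref{phiclosed} shows $[\D,\pi(a)]\in\pi(O_{nc})$, and $u$ is a modular unitary so $u[\D,u^*]\in\cm_{sa}$), Proposition \ref{funnystate} — more precisely its ingredient Proposition \ref{dixycomp} via Lemma \ref{tracesplit} — identifies this limit with $\tau\circ\Phi(u[\D,u^*])$, and simultaneously shows the limit is an honest limit and agrees with the Dixmier-functional value $\tfrac12\widehat{\tau}_{\tilde\omega}(u[\D,u^*](1+\D^2)^{-1/2})$.

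For the cocycle statement I would first state what is meant by a $\s$-twisted $b,B$-cocycle in the relevant (lowest, one-dimensional) degree: writing $\phi_1(a_0,a_1)=\tfrac12\lim_{s\to1^+}(s-1)\tau_\Delta(a_0[\D,a_1](1+\D^2)^{-s/2})$ and letting $\phi_0$ be the associated twisted zero-cochain $\phi_0(a)=\widehat\tau_\omega(a)=\tau\circ\Phi(a)$, the claim is $b_\s\phi_0 = 0$, $B\phi_1 = 0$ on the relevant complex, where the twisted coboundary is $(b_\s\phi_1)(a_0,a_1,a_2)=\phi_1(a_0a_1,a_2)-\phi_1(a_0,a_1a_2)+\phi_1(\s(a_2)a_0,a_1)$ and $B$ the usual operator $B\phi_1(a)=\phi_1(1,a)-\phi_1(a,1)$; the key point is that the twist in the last term of $b_\s$ comes precisely from the KMS/modular property, exactly as in \eqref{eq:kms-flow}. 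I would verify $b_\s\phi_1=0$ by expanding each term using the Leibniz rule $[\D,a_1a_2]=[\D,a_1]a_2+a_1[\D,a_2]$ and the trace property of $\widehat\tau$ on $\cm$ (Lemma \ref{commutes}): the cyclic rearrangement that would produce an untwisted $b\phi_1$ in the tracial case instead produces the factor $\s$ because moving $(1+\D^2)^{-s/2}$ (equivalently $\Delta$) past an element $\pi(a)$ converts it to $\pi(\s^{\pm1}(a))$, and one checks the surviving term is $\phi_1(\s(a_2)a_0,a_1)$. The vanishing $B\phi_1=0$ is immediate since $[\D,1]=0$, so $\phi_1(a,1)=0$, and $\phi_1(1,a)=\tfrac12\lim(s-1)\tau_\Delta([\D,a](1+\D^2)^{-s/2})=0$ by Lemma \ref{phiclosed}. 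That $\phi_1$ is well-defined (the limit exists) and finite on $\A\otimes\A$ follows from Lemma \ref{tracesplit} together with $[\D,\pi(a)]\in\pi(O_{nc})$, which reduces every evaluation to the scalar sum $\sum_k(1+k^2)^{-s/2}$ times a value of $\tau\circ\Phi$.

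I would also check the compatibility condition linking $\phi_0$ and $\phi_1$ that makes the pair a genuine (b,B)-cocycle rather than two separate conditions — typically $b_\s\phi_0 = 0$ automatically in this degree (a twisted $1$-cochain has no lower coboundary obstruction of this form) and possibly a residual identity of the shape $B\phi_1 = (n)\,b_\s\phi_0$ in a normalisation where constants appear; I would pin down the exact normalisation by comparison with the untwisted residue cocycle of \cite{CPRS2} and record it, since the paper's own phrase ``see the proof below for the definition'' signals that the precise indexing is to be fixed there. The main obstacle I anticipate is \emph{bookkeeping the twist correctly}: one must be scrupulous about the distinction between $\s$ (the algebra automorphism on $\A$) and $\s_i=\mathrm{Ad}\,\Delta^{-1}$ (its spatial implementation), about which powers of $\Delta$ one is commuting past which operators, and about the fact that $(1+\D^2)^{-s/2}$ commutes with $\Delta$ but $[\D,a]$ does not commute with $\Delta$ unless $a\in F$ — so the cyclic manipulation inside $\widehat\tau$ must be done only after projecting with $\Psi$, using the identity $\tau_\Delta(TB)=\widehat\tau(\Psi(T)B)$ from Lemma \ref{commutes}. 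Once that is organised, each individual equality is a short computation on the generators $S_\mu S_\nu^*$ of the type already carried out in Lemmas \ref{tomita} and \ref{tracesplit}.
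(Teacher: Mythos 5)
Your overall architecture is the same as the paper's: vanish the $\eta$ and kernel corrections, pass to the finitely summable formula \eqref{sf-formula}, extract the residue, identify $u[\D,u^*]$ as an element of $F_c$ so that Proposition \ref{funnystate} evaluates the limit as $\tau\circ\Phi(u[\D,u^*])$, and then check $b^\s\theta=0$, $B^\s\theta=0$. Two steps, however, are justified by mechanisms that do not actually work.

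First, the reduction of $\int_0^1\widehat\tau\bigl(u[\D,u^*](1+(\D+tu[\D,u^*])^2)^{-1/2-r}\bigr)dt$ to $\widehat\tau\bigl(u[\D,u^*](1+\D^2)^{-s/2}\bigr)$ is not a consequence of exactness of the one-form. Path-independence only lets you re-route between the fixed endpoints $\D$ and $u\D u^*$; there is no ``trivial path'' joining two distinct operators, and no choice of path removes the $t$-dependence of the resolvent in the integrand. The paper's argument is analytic, not geometric: by \cite[Lemma 6.1]{CPS2} the difference $(1+(\D+tu[\D,u^*])^2)^{-s/2}-(1+\D^2)^{-s/2}$ is uniformly bounded in trace norm for $t\in[0,1]$ and $s\in(1,4/3)$, so freezing the resolvent at $t=0$ produces a remainder that stays bounded as $s\to1^+$ and is therefore annihilated by the prefactor $(s-1)/2$. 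Without this estimate (or something equivalent) the first equality in the theorem is unproved.

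Second, in the cocycle computation your stated source of the twist --- that ``moving $(1+\D^2)^{-s/2}$ (equivalently $\Delta$) past an element $\pi(a)$ converts it to $\pi(\s^{\pm1}(a))$'' --- is false: $(1+\D^2)^{-s/2}$ is not $\Delta$ (which is $n^{-\D}$), and conjugation by $(1+\D^2)^{-s/2}$ does not implement $\s$ on $\pi(\A)$. The twist enters only through the weight, i.e.\ through the factor $\Delta$ hidden in $\tau_\Delta=\tilde\tau(\Delta\,\cdot)$. The clean route, which is the paper's, is to first use Lemma \ref{phiclosed} and Proposition \ref{funnystate} to identify $\theta(a_0,a_1)=\tau\circ\Phi(a_0[\D,a_1])=\psi(a_0[\D,a_1])$ as an honest value of the KMS state; then a Leibniz expansion of $b^\s\theta$ leaves exactly $-\psi(a_0[\D,a_1]a_2)+\psi(\s(a_2)a_0[\D,a_1])$, which vanishes by the single KMS identity $\psi(xy)=\psi(\s(y)x)$. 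You do name the KMS property as the source of the twist, so the idea is present, but the computation as you describe it would not close. Finally, your worry about a residual compatibility identity of the form $B\phi_1=(n)\,b_\s\phi_0$ is unnecessary: the paper's definition of a $\s$-twisted $(b,B)$-cocycle here is simply $b^\s\theta=0$ and $B^\s\theta=0$ for the single $1$-cochain $\theta$, with $B^\s\theta(a_0)=\theta(1,a_0)$ vanishing by Lemma \ref{phiclosed}.
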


\begin{proof} First we observe that by \cite[Lemma 6.1]{CPS2}, the difference
$$(1+(\D+t u[\D,u^*])^2)^{-s/2}-(1+\D^2)^{-s/2}$$
is uniformly bounded in trace class norm for $t\in [0,1]$ and $s\in (1,4/3)$. 
Hence in the spectral flow formula (\ref{sf-formula}), by the simple change of 
variable $r=1/2(s-1),$ we may write
$$C_{s/2}\;sf_{\widehat\tau}(\D,u\D u^*)=
\widehat\tau(u[\D,u^*](1+\D^2)^{-s/2})
+\mbox{remainder}.$$
Where the remainder is bounded as $s\to 1^+.$ Multiplying this equation by 
$(s-1)/2$
and taking the limit as $s\to 1^+$ recalling that 
$C_{s/2}=\frac{\Gamma((s-1)/2)\Gamma(1/2)}{\Gamma(s/2)}$ 
so that $(s-1)/2\;C_{s/2}\to 1,$ we get:
$$sf_{\widehat\tau}(\D,u\D u^*)=
\frac{1}{2}\lim_{s\to 1^+}(s-1)\widehat\tau(u[\D,u^*](1+\D^2)^{-s/2}).$$
Now by the proof of Lemma \ref{phiclosed}, $u[\D,u^*]$ is in $\A=O_{nc}$
and since it is also in $\cm$ it is in $F_c$
and so by Proposition \ref{funnystate} this last limit equals
$\tau\circ\Phi(u[\D,u^*])$ as claimed.
%we get
%$$\lim_{r\to 0^+}{r}
%\phi(A(1+\D^2)^{-1/2-r})=\frac{1}{2}\lim_{s\to 1^+}(s-1)\phi(A(1+\D^2)^{-s/2}),\
%\ A\in\cM.$$

To see that we obtain a $\s$-twisted cocycle, we denote by $\theta$ the
functional
$$\theta(a_0,a_1)=
\frac{1}{2}\lim_{s\to 1^+}(s-1)\tau_\Delta(a_0[\D,a_1](1+\D^2)^{-s/2}),$$
and observe that by the proof of Lemma \ref{phiclosed} the elements
$[\D,a_1]$ are in $\A$ and so by Proposition \ref{funnystate} we see that
not only do these limits exist, but in fact,
$$\theta(a_0,a_1)=\tau\circ\Phi(a_0[\D,a_1]).$$
By definition  $B^\s\theta(a_0)=\theta(1,a_0),$ and so by Lemma \ref{phiclosed}
$$(B^\s\theta)(a_0)=\lim_{s\to 1+}(s-1)\tau_\Delta([\D,a_0](1+\D^2)^{-1/2-r})=0$$ 
By definition, 
\bean b^\s\theta(a_0,a_1,a_2)&=&\theta(a_0a_1,a_2)-\theta(a_0,a_1a_2)
+\theta(\s(a_2)a_0,a_1)\\
&=& -\tau\circ\Phi(a_0[\D,a_1]a_2) + \tau\circ\Phi(\s(a_2)a_0[\D,a_1])\\ 
\eean
This is $0$ by the KMS condition (see the Remark prior to Corollary 3.4)
for the state $\psi=\tau\circ\Phi.$
Thus, both $b^\s\theta=0$ and $B^\s\theta=0,$ and we're done.
%\bean(b^\s\theta)(a_0,a_1,a_2)&=&\lim_{r\to 0}
%{r}\phi(a_0a_1[\D,a_2](1+\D^2)^{-1/2-r}
%-\lim_{r\to 0}{r}\phi(a_0[\D,a_1a_2](1+\D^2)^{-1/2-r})\nno
%&+&\lim_{r\to 0}{r}\phi(\s(a_2)a_0[\D,a_1](1+\D^2)^{-1/2-r})\nno
%&=&-\lim_{r\to 0}{r}
%\phi(a_0[\D,a_1]a_2(1+\D^2)^{-1/2})+\lim_{r\to 0}
%{r}\phi(\s(a_2)a_0[\D,a_1](1+\D^2)^{-1/2-r})\nno
%&=&-\lim_{r\to 0}{r}\phi(a_0[\D,a_1](1+\D^2)^{-1/2}a_2)+
%\lim_{r\to 0}{r}\phi(\s(a_2)a_0[\D,a_1](1+\D^2)^{-1/2-r})\nno
%&=&0\eean
%where we have used the fact that $[a_2,(1+\D^2)^{-1/2-r}]$ is 
%trace class for all $r\geq 0$
%(see \cite{CPRS2} or \cite{CPS2} for the proof 
%that the limits exist as $r\to 0$). 
\end{proof}
{\bf Remark}. Spectral flow in this setting is independent of the path 
joining the endpoints of unbounded self adjoint operators affiliated to 
$\mathcal M$ however
it is not obvious that this is enough to show that it is constant on homotopy 
classes of modular unitaries. This latter fact is true but the 
proof is lengthy and we defer it until we have a fuller understanding of
the structure of the modular unitaries.

\begin{thm} We let $(O_{nc}\otimes M_2,\HH\otimes\C^2,\D\otimes 1_2)$ 
be the modular spectral triple of $(O_{nc}\otimes M_2)$
and $u$ a modular unitary of the form
$$u_{\mu,\nu}=\bma 1-P_\mu & S_\mu S_\nu^*\\ S_\nu
S_\mu^* & 1-P_\nu\ema.$$ Then the spectral flow is positive being given by
\bean sf_{\tau_\Delta}(\D, u\D u^*)&=&
(|\mu|-|\nu|)(n^{-|\nu|}-n^{-|\mu|})
\in (n-1)\Z[1/n]\eean
\end{thm}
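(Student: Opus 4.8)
The plan is to read the number off Theorem~\ref{first}, whose two standing hypotheses --- vanishing of the $\eta$-difference and of the difference of $\widehat\tau$-dimensions of kernels --- must first be checked for $u=u_{\mu,\nu}$. Since $\D$ is affiliated to $\cM$, $u[\D,u^*]\in\cM_{sa}$ for a modular unitary, and $\tau_\Delta$ restricted to $\cM$ is $\widehat\tau$, the entire spectral flow computation lives inside $(\cM,\widehat\tau)$ and $sf_{\tau_\Delta}(\D,u\D u^*)=sf_{\widehat\tau}(\D,u\D u^*)$; so it is enough to verify the hypotheses of Theorem~\ref{first} and then identify $\tau\circ\Phi(u[\D,u^*])$ by a direct computation.

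For the $\eta$-terms I would observe that the spectrum of $\D$ is $\Z$ with $\widehat\tau(\Phi_k)=1$ for every $k$, so $\tau_\Delta(\D e^{-t\D^2})=\sum_{k\in\Z}k\,e^{-tk^2}=0$ by symmetry, whence $\eta_\epsilon(\D)=0$; Lemma~\ref{corrections} then gives $\eta_\epsilon(u\D u^*)=\tau(\s(u^*)u)\,\eta_\epsilon(\D)=0$ as well. For the kernel term, Lemma~\ref{corrections} reduces matters to $\tau(1-\s(u^*)u)$. Using $u_{\mu,\nu}^*=u_{\mu,\nu}$ and $\s(S_\alpha S_\beta^*)=\Delta^{-1}(S_\alpha S_\beta^*)=n^{|\alpha|-|\beta|}S_\alpha S_\beta^*$ from Lemma~\ref{tomita}, a $2\times 2$ multiplication (with $S_\nu^*S_\nu=1$, $(1-P_\mu)S_\mu=0$, and the analogous identities) gives
$$\s(u^*)u=\bma (1-P_\mu)+n^{|\mu|-|\nu|}P_\mu & 0\\ 0 & (1-P_\nu)+n^{|\nu|-|\mu|}P_\nu\ema,$$
so $1-\s(u^*)u=\bma (1-n^{|\mu|-|\nu|})P_\mu & 0\\ 0 & (1-n^{|\nu|-|\mu|})P_\nu\ema$; evaluating $\tau$ (unnormalised matrix trace, $\tau(P_\rho)=n^{-|\rho|}$ by Lemma~\ref{tracegens}) yields $(1-n^{|\mu|-|\nu|})n^{-|\mu|}+(1-n^{|\nu|-|\mu|})n^{-|\nu|}=0$. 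Both hypotheses now hold, so Theorem~\ref{first} gives $sf_{\tau_\Delta}(\D,u\D u^*)=\tau\circ\Phi(u[\D,u^*])$.

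To evaluate the right side, from $[\D,\pi(S_\alpha S_\beta^*)]=(|\alpha|-|\beta|)\pi(S_\alpha S_\beta^*)$ applied entrywise one gets $[\D,u]=\bma 0 & (|\mu|-|\nu|)S_\mu S_\nu^*\\ (|\nu|-|\mu|)S_\nu S_\mu^* & 0\ema$, and left-multiplying by $u$ (again $(1-P_\mu)S_\mu=0$, $S_\nu^*S_\nu=1$) gives $u[\D,u]=\bma (|\nu|-|\mu|)P_\mu & 0\\ 0 & (|\mu|-|\nu|)P_\nu\ema$. As $P_\mu,P_\nu\in F$ the expectation $\Phi$ does nothing, so $\tau\circ\Phi(u[\D,u])=(|\nu|-|\mu|)n^{-|\mu|}+(|\mu|-|\nu|)n^{-|\nu|}=(|\mu|-|\nu|)(n^{-|\nu|}-n^{-|\mu|})$. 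Taking without loss of generality $|\mu|\geq|\nu|$, this equals $(|\mu|-|\nu|)\,n^{-|\mu|}(n^{|\mu|-|\nu|}-1)\geq 0$, and $n^{|\mu|-|\nu|}-1=(n-1)(1+n+\cdots+n^{|\mu|-|\nu|-1})$ exhibits the value as an element of $(n-1)\Z[1/n]$.

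The matrix arithmetic is routine; the real content --- and the only step needing care --- is the verification of the hypotheses of Theorem~\ref{first}. The vanishing of $\eta_\epsilon(\D)$ rests on the symmetry of the integer spectrum of $\D$ with uniform $\widehat\tau$-multiplicity, while $\tau(1-\s(u^*)u)=0$ is the cancellation $(1-n^{|\mu|-|\nu|})n^{-|\mu|}=-(1-n^{|\nu|-|\mu|})n^{-|\nu|}$ that is special to the unitaries $u_{\mu,\nu}$ and to the KMS weight; one should also recall from the Example following the definition of the modular condition that $u_{\mu,\nu}$ really is a modular unitary, so that Proposition~\ref{funnystate} and the spectral flow formula~\eqref{sf-formula} underlying Theorem~\ref{first} are in force.
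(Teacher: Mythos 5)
Your proof is correct and follows essentially the same route as the paper: verify the vanishing of the $\eta$-difference and the kernel correction, then apply Theorem~\ref{first} and compute $\tau\circ\Phi(u[\D,u^*])$ explicitly. The only cosmetic difference is that the paper kills the kernel term $\tau(1-\s(u^*)u)$ via the KMS property for a general $u_v$, whereas you compute $\s(u^*)u$ directly from the eigenvalue formula for $\Delta^{-1}$ on generators; both yield the same cancellation.
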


\begin{proof} Once we have verified that the difference of eta
 terms and the difference of kernel corrections 
vanish, this is just a computation. In fact, by Lemma \ref{corrections},
$$ \eta_\epsilon(u\D u^*)=\tau(\s(u^*)u)\eta_\epsilon(\D)=
\tau(\s(u^*)u)\int_\epsilon^\infty\left(\sum_{k\in\Z}ke^{-tk^2}\right)dt=0=
\int_\epsilon^\infty\left(\sum_{k\in\Z}ke^{-tk^2}\right)dt=\eta_\epsilon(\D).$$

For the kernel corrections we use Lemma \ref{corrections} and first compute  $1-\s(u_v^*)u_v$,
noting that $$\s(v)(1-v^*v)=\s(v)\s(1-v^*v)=\s(v-vv^*v)=0.$$
$$1-\s(u_v^*)u_v=1-\s(u_v)u_v=\bma v^*v-\s(v^*)v &0\\ 0 & vv^*-\s(v)v^*\ema.$$
%By Lemma \ref{corrections}, it suffices to compute 
For $\tau(1-\s(u_v^*)u_v)$
%for $v=S_\nu S_\mu^*$:
we use the KMS property of $\psi=\tau\circ\Phi$:
\bean
\tau(1-\s(u_v^*)u_v)=\tau(v^*v-\s(v^*)v)+\tau(vv^*-\s(v)v^*)
=\tau(v^*v-vv^*)+\tau(vv^*-v^*v)=0.\eean
%&=&n^{-|\mu|} - n^{-|\nu|} + n^{-|\nu|} - n^{-|\mu|}=0.\eean
Hence both the eta terms and kernel corrections vanish, and the
spectral 
flow can be computed
from the integral of the exact one form of Lemma \ref{oneform}.

For the computation we use a calculation in the proof of Lemma \ref{phiclosed}
to get
\bean &&u_{\mu,\nu}[\D\otimes 1_2 ,u_{\mu,\nu}]
=\bma 1-P_\mu & S_\mu S_\nu^*\\ S_\nu
S_\mu^* & 1-P_\nu\ema\bma 0 & {[\D ,S_\mu S_\nu^*]}\\
{[\D , S_\nu S_{\mu}^*]} & 0\ema\\
&=&\bma 1-P_\mu & S_\mu S_\nu^*\\ S_\nu
S_\mu^* & 1-P_\nu\ema\bma 0 & (|\mu|-|\nu|)S_\mu S_\nu^*\\
(|\nu|-|\mu|)S_\nu S_\mu^* & 0\ema=(|\mu|-|\nu|)\bma -P_\mu & 0\\ 0 &
P_\nu\ema.\eean
So using Theorem \ref{first} and our previous computation of the
Dixmier trace, Proposition \ref{dixycomp}, we have
$$sf_{\tau_\Delta}(\D,u_{\mu,\nu}\D u_{\mu,\nu})=(|\mu|-|\nu|)\tau(P_\nu-P_\mu)
=
(|\mu|-|\nu|)(n^{-|\nu|}-n^{-|\mu|}).$$
This number is always positive as the reader may check, and is
contained in $(n-1)\Z[1/n]$, the integer polynomials in $1/n$ all of
whose coeffficients have a factor of $(n-1)$.  
\end{proof}
\begin{rems*}
We observe that since this unitary $u_{\mu,\nu}$ is self-adjoint the spectral
flow {\bf cannot} be interpreted simply as the index of the Toeplitz compression
of $u_{\mu,\nu}$ by the non-negative spectral projection of
$\D\otimes 1_2$: for one thing this ``Toeplitz compression'' is not in $\cm$
and if it were in $\cm$ its index would have to be $0$.
Next we use the viewpoint provided by the noncommutative APS theory of 
\cite{CPR}. This gives a partial explanation of the numerical values of 
the spectral flow obtained for the Cuntz algebras.
\end{rems*}
\begin{cor}  Let $(O_{nc}\otimes M_2,\HH\otimes\C^2,\D\otimes 1_2)$ 
be the modular spectral triple of the theorem
 and $u$ a modular unitary of the form
$u_{v}$, where $v=S_\mu S_\nu^*$ so that $v^*v=S_\nu S_\nu^*$ and
$vv^*=S_\mu S_\mu^*$ are both in $F.$ Let $(X,\D\otimes 1_2)$ be the 
Kasparov module 
for $O_n\otimes M_2,F\otimes M_2$
described earlier. Then from the pairing
$$ K_0(M(F\otimes M_2,O_n\otimes M_2))\times (X,\D\otimes 1_2)\to K_0(F)$$
we have the classes of the projections 
$${\rm Index}(PvPv^*:vPv^*(X)\to vv^*P(X))\ \ {\rm and}\ \ {\rm
  Index}(Pv^*Pv:v^*Pv(X)\to v^*vP(X)) \in K_0(F).$$
These two classes are negatives of each other in $K_0(F)$, but
$$sf_{\tau_\Delta}(\D,u_{v}\D u_{v})=
\tau_\Delta({\rm Index}(PvPv^*:vPv^*X\to vv^*PX)) + 
\tau_\Delta({\rm Index}(Pv^*Pv:v^*PvX\to v^*vPX)),$$
where here we apply $\tau_\Delta$ to the difference of projections defining 
the index as a
difference of $F$-modules.
\end{cor}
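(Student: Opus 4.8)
The plan is to compute both sides explicitly and match them. Write $d=|\mu|-|\nu|$ and, as before, $P=\sum_{k\ge 0}\Phi_k=\mathcal{X}_{[0,\infty)}(\D)$ and $P_{\ge m}=\sum_{k\ge m}\Phi_k$. I may assume $d\ge 0$: for $d=0$ we have $v\in F$ and every term vanishes, while the case $d<0$ follows by interchanging $\mu$ and $\nu$ throughout (this swaps $v$ with $v^*$ and the two index maps below, and replaces $u_v$ by $u_{v^*}\sim u_v$), so the asserted identity is unchanged. Since $v=S_\mu S_\nu^*$ has $[\D,v]=d\,v$, $v^*v=P_\nu$, $vv^*=P_\mu$ with $P_\mu,P_\nu\in F$, it is a modular partial isometry (the Example after Lemma \ref{noinv}) and $v^*[\D,v]=d\,P_\nu$ commutes with $\D$, so Proposition \ref{mainresult} applies; this is what produces the two index classes in $K_0(F)$. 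For the left-hand side, since $u_v=u_{\nu,\mu}$, the computation in the proof of the preceding theorem (applied with $\mu$ and $\nu$ interchanged) gives $u_v[\D\otimes 1_2,u_v]=d\big(P_\nu\oplus(-P_\mu)\big)$, the eta and kernel corrections vanish by Lemma \ref{corrections}, and so Theorem \ref{first} gives $sf_{\tau_\Delta}(\D,u_v\D u_v)=\tau\circ\Phi\big(u_v[\D\otimes 1_2,u_v]\big)=d\,(n^{-|\nu|}-n^{-|\mu|})$.

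The main work is to make the two index maps explicit. Using $v\Phi_k=\Phi_{k+d}v$ and $v^*\Phi_k=\Phi_{k-d}v^*$ (Remark after Proposition \ref{dee}) together with the fact that $P_\mu,P_\nu\in F$ commute with every $\Phi_k$, one computes $PvPv^*=P_\mu P_{\ge d}$ and $Pv^*Pv=P_\nu P$ as operators on $X$. Hence $PvPv^*\colon vPv^*X\to vv^*PX$ is just the inclusion $P_\mu P_{\ge d}X\hookrightarrow P_\mu PX$, which has zero kernel and cokernel the range of $Q_1:=P_\mu\sum_{k=0}^{d-1}\Phi_k$; and $Pv^*Pv\colon v^*PvX\to v^*vPX$ is the orthogonal projection of $P_\nu P_{\ge -d}X$ onto $P_\nu PX$, which is onto with kernel the range of $Q_2:=P_\nu\sum_{k=-d}^{-1}\Phi_k$. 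By Lemma \ref{finrank} each $Q_i$ is a finite rank $F$-endomorphism of $X$, and $Q_i\in\cM$ since it commutes with $\D$, so as formal differences of projections over $F$ the two indices are $-[Q_1]$ and $[Q_2]$ respectively. They are negatives of one another in $K_0(F)$ because $w:=v\sum_{k=-d}^{-1}\Phi_k$ is a partial isometry with $w^*w=Q_2$ and $ww^*=Q_1$; but $w$ has grading $d\neq 0$, so $w\notin\cM$, which is exactly why $\tau_\Delta$ will not respect this equivalence.

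The last step is a trace computation via Lemma \ref{tracesplit}. Taking $g$ to be the characteristic function of $\{0,1,\dots,d-1\}$ gives $\tau_\Delta(Q_1)=\tau_\Delta\big(\pi(P_\mu)g(\D)\big)=\tau(P_\mu)\sum_{k}g(k)=d\,n^{-|\mu|}$ by Lemma \ref{tracegens}, and taking $g$ the characteristic function of $\{-d,\dots,-1\}$ gives $\tau_\Delta(Q_2)=d\,n^{-|\nu|}$. Applying $\tau_\Delta$ to the two formal differences and adding therefore yields $-d\,n^{-|\mu|}+d\,n^{-|\nu|}=(|\mu|-|\nu|)(n^{-|\nu|}-n^{-|\mu|})=sf_{\tau_\Delta}(\D,u_v\D u_v)$, as required.

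I expect the delicate point to be the middle step: one must keep careful track of the grading shifts by $\pm d$, verify that the restricted and corestricted operators $PvPv^*$ and $Pv^*Pv$ really are the inclusion and the orthogonal projection as claimed, check that their kernel and cokernel projections are the finite rank $F$-endomorphisms that make the $K_0(F)$-classes and the values of $\tau_\Delta$ legitimate, and get the signs to agree with the conventions in the statement. Once that is done the trace values are immediate from Lemma \ref{tracesplit}, and the equality with spectral flow follows from Theorem \ref{first} (equivalently the preceding theorem).
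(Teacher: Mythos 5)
Your proposal is correct and follows essentially the same route as the paper: the grading-shift identity $v\Phi_k v^*=vv^*\Phi_{k+d}$ (the paper's ``key fact''), the identification of $PvPv^*$ and $Pv^*Pv$ as an inclusion and a projection with explicit finite-rank kernel/cokernel projections $Q_1,Q_2$, the Murray--von Neumann equivalence that $\tau_\Delta$ fails to respect, and the evaluation of $\tau_\Delta$ on $Q_1,Q_2$ via the trace formula. The only cosmetic differences are that you package the equivalence into a single partial isometry $w$ and treat the cases $d=0$, $d<0$ explicitly, where the paper simply notes the case $m<0$ is similar.
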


\begin{proof}
In \cite{CPR} Lemma 3.5 and Theorem 5.1, the authors used the following
operators and indices:
$${\rm Index}(PvP:v^*vP(X)\to vv^*P(X))\;\;\;{\rm and}\;\;\;
{\rm Index}(Pv^*P:vv^*P(X)\to v^*vPX).$$
Each index is the {\bf exact} negative of the other, and so 
if we evaluate both with $\tau_\Delta$ and add we get exactly $0.$
The $K_0(F)$ elements given by the indices of these two operators are the 
same as the ones considered in this Corollary. However, the point of view 
of this Corollary is to consider mappings from say 
the non-negative spectral subspace of $v\D v^*$ (i.e., $vPv^*(X)$) to 
the non-negative spectral subspace of $vv^*\D$ (i.e., $vv^*P(X)$). 
Here we get quite a different answer.

Let $v=S_\mu S_\nu^*$, and $m=|\mu|-|\nu|$, $m>0$; so that $vv^*=S_\mu S_\mu^*.$
A simple computation on monomials $S_\alpha S_{\beta}^*$ gives us the 
{\bf key fact} that : $v\Phi_k v^* = vv^*\Phi_{k+m}$ for all $k\in\Z.$
This easily implies that $vPv^*=vv^*(\sum_{k\geq m}\Phi_k)\leq vv^*P$
so that $(PvPv^*)vPv^*=vPv^*$ and so $\ker(PvPv^*)=\{0\}.$ This also shows that:
$${\rm cokernel}(PvPv^*:vPv^*(X)\to vv^*P(X))=
vv^*P(X)\ominus vv^*(\sum_{k\geq m}\Phi_k)(X)=\sum_{k=0}^{m-1}vv^*\Phi_k(X).$$
Similarly, $v^*Pv=\sum_{k\geq -m}v^*v\Phi_k \geq v^*vP$ so that
$(Pv^*Pv)v^*Pv=v^*vP$ and so $Pv^*Pv$ is {\bf onto} $v^*vP(X).$ That is,
${\rm cokernel}(Pv^*Pv)=\{0\}.$ This also shows that
$${\rm kernel}(Pv^*Pv:v^*Pv(X)\to v^*vP(X))=
\sum_{k\geq -m}v^*v\Phi_k(X) \ominus v^*vP(X)=\sum_{k=-m}^{-1}v^*v\Phi_k(X).$$
 
To see that these indices are negatives in $K_0(F)$ it suffices to see
the equivalence between the two projections 
$\sum_{k=0}^{m-1}vv^*\Phi_k$ and $\sum_{k=-m}^{-1}v^*v\Phi_k.$  This
is obtained from our {\bf key fact} above:
$$ (v\Phi_k)(\Phi_kv^*)=vv^*\Phi_{k+m},\ \ \ (\Phi_kv^*)(v\Phi_k)=v^*v\Phi_k.$$
This is of course the Murray-von Neumann equivalence which
$\tau_\Delta$ does not respect.

Assume then that $m>0$. Applying $\tau_\Delta$
we have
$$ \tau_\Delta(\mbox{Index}(PvPv^*))=-m\tau(vv^*)=-\frac{m}{n^{|\mu|}},$$
while 
$$\tau_\Delta(\mbox{Index}(Pv^*Pv))=m\tau(v^*v)=\frac{m}{n^{|\nu|}}.$$
The case $m<0$ is similar.
\end{proof}

{\bf Remark}. This Corollary makes it clear that our new
index pairings are non-trivial precisely because $\tau_\Delta$ does
not induce a
map on $K_0(End^0_F(X))$. Of course $\widehat\tau_{\omega}$ just becomes the trace on
elements of $F$, but $\tau_\Delta$ is a weight on $\cn$ and so on
$End_F^0(X)$, which is Morita equivalent to $F$. However, since $\tau_\Delta$ 
is not a trace on $\cn$ it does
not respect all Murray-von Neumann equivalences in $\cn$, and
so does not give a well-defined map on $K$-theory. 
So we may think of the spectral flow invariant associated to
$u_{\mu,\nu}$ as a measure of the failure of $\tau_\Delta$ to respect the
Murray-von Neumann equivalence between $\mbox{Index}(PvPv^*)$
and $-\mbox{Index}(Pv^*Pv)$.

More generally we have
\bean&& sf_{\tau_\Delta}(\D_2,u_v\D_2 u_v)\nno
&=&\mbox{Index}_{\tau_\Delta}(P_2u_vP_2u_v)=
\mbox{Index}_{\tau_\Delta}\left(\bma (1-vv^*)P+PvPv^* & 0\\ 0 &
  (1-v^*v)P+Pv^*Pv\ema\right).\eean
Since $P(1-vv^*)=(1-vv^*)P$ is an isomorphism from $(1-vv^*)PX$ to
itself, and similarly for $(1-v^*v)P$, we see that the index is
precisely the sum of the indices of $PvPv^*$ from $vPv^*X$ to
$vv^*PX$, 
and $Pv^*Pv$
from $v^*PvX$ to $v^*vPX$. 
Hence the spectral flow for modular unitaries of the form $u_v$ arises
precisely because $\tau_\Delta$ does not induce a homomorphism on
$K_0(End_F^0(X))$. 

Our arguments here rely on the vanishing of the difference of eta
terms and kernel corrections. 
In the general case these eta and kernel terms contribute
and may have cohomological significance. We will return to this more general
set up in a future work.

\section{Concluding Remarks}
\subsection{Relative entropy}
In this subsection we give a physical interpretation of our index.
Let $u$ be a modular unitary over $O_n$. Recall that $\psi$ is the
state on $O_n$ defined by $\psi=\tau\circ\Phi.$ Let $\psi_u$ be the state
$\psi\circ Ad u$ on $O_n$ defined by
$\psi_u(a) =\psi(u^*au), a\in A$. The modular group for
$\psi_u$ is $t \to u\Delta^{it}u^*$
$t\in \R$. The relative entropy of a pair of KMS states
on a von Neumann algebra
was introduced by Araki \cite{Ar} (it uses explicitly 
a cyclic and separating vector).
The Hilbert space 
${\mathcal H}=\LL^2(O_n,\psi)$ has a cyclic and separating vector for the
action of $O_n$. In fact this vector remains cyclic and separating for the  
weak closure $\pi(O_n)^{\prime\prime}$ in 
$\mathcal N$ of $\pi(O_n)$. It may be thought of as the identity element in
$O_n$ but we will use the notation $\Omega$ because of the potential 
for confusion. 

For $a\in O_n$, $\psi(a) =\langle\Omega, \pi(a)\Omega\rangle$
so that we may write 
$
\psi(T)=\langle\Omega, T\Omega\rangle
$
for all $T\in \pi(O_n)^{\prime\prime}$. So we can regard $\psi_u$ and 
$\psi$ as a pair of KMS states on $\pi(O_n)^{\prime\prime}$.
Then 
 the relative entropy of $\psi_u$ and $\psi$
is \cite{Ar}
$$S(\psi_u,\psi)=-\langle\Omega, \log(u\Delta u^*)\Omega\rangle.$$
This can be written as
 $$S(\psi_u,\psi):= -\psi(u(\log\Delta) u^*-\log\Delta)$$
This is because $\Delta\Omega =\Omega$ implies that $(\log\Delta)(\Omega)=0.$
Now we can relate
the  relative entropy for this pair of KMS states on the weak closure
of $\pi(O_n)$ to spectral flow for 
the Cuntz algebra example when we have a modular
unitary $u$.
We just use the formula $\log\Delta=-(\log n )\D$ and then 
by Theorem \ref{first} we see that this relative entropy is just
 $$(\log n) \psi(u\D u^* -\D)=(\log n)\psi(u[\D,u^*])=(\log n)\tau\circ\Phi
(u[\D,u^*])=(\log n)sf(\D,u\D u^*).$$
That is, the relative entropy is just $\log n$ times the spectral
flow from $\D$ to $u\D u^*$. We remark that the relative entropy is always 
positive \cite{Ar}.

\subsection{Manifold structures} In \cite{PRS2} it was shown that many of 
the (tracial) examples of semifinite spectral triples constructed for graph and 
$k$-graph algebras satisfied natural generalisations of Connes' axioms for 
noncommutative manifolds, \cite{C1}.

Much of the discussion of \cite{PRS2} can be applied verbatim to the triple
$(O_{nc},\HH,\D)$ constructed here. For instance the axiom of 
finiteness is obvious, as is Morita equivalence (spin$^c$), first order 
condition, regularity (or $QC^\infty$), and irreducibility. The reality, or 
spin, condition can be proved as in \cite{PRS2}, and we have proven the 
closedness condition in Lemma \ref{phiclosed}.

The chief differences come from the summability/dimension/absolute continuity
and crucial orientability conditions.
We have a version of summability satisfied since
$(1+\D^2)^{-1/2}\in\LL^{(1,\infty)}(\cM,\widehat\tau),$
and for $a\in O_{nc}$ nonzero and positive,
$$\lim_{s\to 1^+}(s-1)\tau_\Delta(a(1+\D^2)^{-s/2})=
\lim_{s\to 1^+}(s-1)\widehat\tau(\Psi(a)(1+\D^2)^{-s/2})=2\psi(a)>0.$$
Moreover, we have a twisted Hochschild cycle satisfying the 
(twisted) orientability condition, 
and moreover it is given {\em by the same formula} as in the 
tracial case. This cocycle is
$$ c=\frac{1}{n}\sum_{j=1}^nS_j^*\otimes S_j.$$
We have two properties to check: that it is indeed a cocycle, and that it is 
represented by the identity operator on $\HH$.
Applying the twisted Hochschild boundary gives
$$b^\sigma c=\frac{1}{n}\sum_{j=1}^n (S_j^*S_j-\sigma(S_j)S_j^*)=
\frac{1}{n}\sum_{j=1}^n(1-nS_jS_j^*)=1-\sum_{j=1}^nS_jS_j^*=0.$$
This Hochschild cycle is represented on $\HH$ by
$$\pi(c)=\frac{1}{n}\sum_{j=1}^n S_j^*[\D,S_j]
=\frac{1}{n}\sum_{j=1}^nS_j^*S_j=\frac{1}{n}\sum_{j=1}^n1=1.$$
Hence $c$ has the required representation properties, and the replacement 
of the Hochschild theory with its twisted analogue has provided us with an 
orientation cycle for the `modular spectral triple' of the Cuntz algebra.
Thus Cuntz algebras may be a prototype for
 `type III noncommutative 
one dimensional manifolds'.

\subsection{Outlook}

There are many unresolved issues raised by these examples of an index
theory for the KMS state on the Cuntz algebra.
%In
%work in progress
%\cite{cpr3} on index theory in type $III$  von Neumann algebras
%we have found a more general framework
%for our results. 
%In particular we remark that there
%is a connection, via the notion of
%relative entropy, between our point of view
%and the quantum index theory of \cite{Lo1} and \cite{Lo2}
%in the special case where the fixed point algebra of the modular group is 
%the scalars.
The main point is to understand
the nature of the invariant being computed by our spectral flow 
formula for the modular unitaries.
Just as semifinite spectral triples give rise to
$KK$-classes, modular spectral triples also give rise to $KK$-classes. This
follows in the same way as the semifinite case, \cite{NR}. However, the
relationship to the $KK$-index pairing is obviously very different and we
are investigating this now.
At this time we do not see a relationship to the viewpoint of Connes
and Moscovici \cite{CoM}.

\end{document}
 The interesting point about this observation is its relation to the
quantum index theory of Longo. In Theorem 1.1(ii) of \cite{Lo1} a
relationship between certain modular operators
 and a number, the DHR statistical dimension \cite{DHR}, is derived. In
our context this relationship would arise in a von Neumann algebra for
which the fixed point algebra of the modular group is trivial. For, in
this case, the modular condition would require modular unitaries to
satisfy
 $$(u\log\Delta u^*-\log\Delta)=d \in \R.\ \ (*)$$
In the situation of \cite{Lo1} $d$ is related in a simple fashion to the
statistical dimension.
If it were applicable in this more general setting the index theory
described in this paper
would identify $d$ with a multiple of spectral flow  from  $u\D u^*$ to
$\D$.

Note that here is nothing deep in (*) at this point; it is just a
rewriting of
a consequence of the  modular unitary condition in a form which makes it
comparable to the relationship described in Theorem 1.1(ii) of \cite{Lo1}.
However, in \cite{Lo1,Lo2} it is proved for the situation arising in
algebraic quantum field theory,
that $d$ is quantised,  determines the statistical dimension of \cite{DHR}
and, via the theory of subfactors, is related to the Jones index.  This
suggests the aim of generalising the index formula we have proved for the
Cuntz algebra (a type $III_{1/n}$ situation) to the type $III_1$ setting.
The motivation would be to try to give a
semifinite  index theory interpretation to the statistical dimension of
algebraic quantum field theory.
A secondary motivation would be to explore a connection, if any between
our index for modular unitaries
and theory of  subfactors of von Neumann algebras.

\section{Questions and comments} 

$\bullet$ What is the relationship, if any, between $K_1(A,\s)$ and
$K_0(M(F,A))$? 

Our experience with the examples suggests the following conjecture.

\begin{conj*} If $\s$ is a regular automorphism of the smooth
  $*$-algebra $A$ with fixed point algebra $F$, then
$$ K_1(A,\s)\cong K_1(F)\oplus {\mathcal S}$$
where ${\mathcal S}$ is the semigroup of modular homotopy classes of
unitaries over $A$ of the form $u_v$ where $v$ is a partial isometry
over $A$ with $vv^*,\ v^*v,\ v\s(v^*)\ \mbox{and}\ v^*\s(v)$ all over
$F$.
\end{conj*}

If true, this conjecture tells us that the `interesting' part of
modular $K_1$ actually arises from similar constructions as
$K_0(M(F,A))$. The precise relationship is not clear.

$\bullet$ What is the Chern character of 
a modular unitary? or even a modular unitary like $u_{\mu,\nu}$?

We will return to this subject in a future work. The answer is related
to our mapping cone index pairing, and the Chern character in that setting.

$\bullet$ Our twisted chern character for the spectral triple lies in
twisted cyclic for a very smooth subalgebra of the Cuntz algebra. 
In fact it lies in a
subspace of cochains vanishing on $(a_0,a_1,...,a_k)$ whenever $a_j\in
F$, $j\geq 1$. Thus any hope that we could consider pairings
with the cyclic homology of the fixed point algebra is immediately
dashed, \cite{Gos}.

$\bullet$  Smoothness. We need to replace completions with respect to the
$\delta$-topology by something much smaller. Our suggestion is  
to take all those $a\in\A$ such that $z\to \s^z(a)$ is
holomorphic. This algebra is stable under the holomorphic  functional calculus.

This choice is because we need to consider a 
dense subalgebra of analytic elements for the modular group.
Now $a$ is analytic if $\sum_n ||[\D,[\D,[\ldots[\D,a]\ldots]]||t^n/n!$
($n$ commutators) is finite. Here we are taking $\D$ as the generator of
the modular group. We see that being analytic is close to being
$QC^\infty$. There is clearly a natural Frechet topology on the analytic
elements.

$\bullet$ Invariance properties of modular spectral triples. Due to
the need for $\D$ to commute with $F$, it is unlikely that modular
spectral triples are stable with respect to bounded perturbations from
$F$. 
%Similarly we require at several points that $\Delta=e^{c\D}$ for
%$c\in\R$. This rules out perturbations from $L^\infty(\Delta)$. 
If
these conditions are essential for our results, and we suspect they
are, then we have very little stability left. This tells us that
modular spectral triples are essentially of the form
$(\A,\HH_\phi,\log\Delta_\phi)$ for $\A\subset\cn$ a suitable subalgebra.

This suggests that modular $K_1$ does not have an even analogue, and
we are really only probing the one dimensional phenomena arising from
the action of a modular group.

$\bullet$ If this is a theory of $C^*$-algebras (or their dense smooth
subalgebras), then we suspect that the basic cycles of the dual to
modular $K_1$ for an algebra $A$
are the KMS states (weights) with given real action $\s_t$. 
The above comments on the lack of
stability suggests that the resulting `modular $K$-homology' has very
few relations between its elements.

$\bullet$ Just as semifinite spectral triples give rise to
$KK$-classes, modular spectral triples give rise to $KK$-classes. This
follows in the same way as the semifinite case, \cite{NR}. The
relationship to the $KK$-index pairing is obviously very different, however.
%
%$\bullet$ 
%We can consider projections $P,Q$ 
%in $\mathcal M$ (with $P$ arising from $\D$)
%such that $P-Q$ is compact in $\mathcal M$.
%It does not follow that $P,Q$ are conjugate in $\mathcal M$,
%indeed everything in $\mathcal M$ commutes with $P$.
%In fact 
%it would appear that the paradoxical behaviour 
%(that is they do not form a group) of modular unitaries
%arises because $P,Q$ are conjugate in $\mathcal N$ but
%they are not conjugate under a unitary in $\mathcal M$.
%We have a map from the modular unitaries to the reals given by
%$u\to ind(PuPu^*)$ constructed using the fact that spectral flow 
%$\D\to u\D u^*$ is given by $ind(PuPu^*)$.

\end{document}
***************** We may not need the following?

Now we consider
some computations for $\tau_\Delta.$ First, for all $k$:
$$
\tau_\Delta(\Phi_k)=\widehat\tau(\Psi(\Phi_k))=\widehat\tau_k(\Phi_k)=
n^{-k}\tilde\tau(\Phi_k)=1.$$
However, $\tau_\Delta$ is not a trace on finite rank endomorphisms. \bean
\tau_\Delta(\Theta^{R}_{x,y}\Theta^{R}_{w,z})&=&\tau_\Delta(\Theta^{R}_{x(y|w),z})
=\tilde\tau(\Delta\Theta^{R}_{x(y|w),z})=\tau((z|\Delta x(y|w)))\nno
&=&\tau((z|\Delta x)(y|w))\ \qquad\qquad\mbox{since}\ \Delta\
\mbox{is linear over}\ F\nno
&=&\tau((y|w)(z|\Delta x))\ \qquad\qquad\mbox{since}\ \tau\ \mbox{is a
trace on}\ F\nno
&=&\tau((y|w)(\Delta z|x))\ \qquad\qquad\mbox{since}\ \Delta\
\mbox{is self-adjoint on }X\nno
&=&\tilde\tau(\Theta^{R}_{w,\Delta z}\Theta^{R}_{x,y})\nno
&=&\tilde\tau(\Delta\Delta^{-1}\Theta^{R}_{w,z}\Delta\Theta^{R}_{x,y})
=\tau_\Delta(\Delta^{-1}\Theta^{R}_{w,z}\Delta\Theta^{R}_{x,y}).\eean
*****************?